\documentclass[a4paper,10pt]{amsart}
\usepackage[utf8]{inputenc}
\usepackage{amsmath,amsthm,amssymb,mathrsfs}

\sloppy
\usepackage{dsfont}
\newtheoremstyle{mystyle}{}{}{}{}{\bf}{}{\newline}{}
\theoremstyle{mystyle}

\usepackage{enumitem}
\usepackage[toc,page]{appendix}
\usepackage{mathtools,array}
\setlength{\headheight}{12pt}

%Page numbering on bottom center
\usepackage{graphicx,lipsum}
\usepackage{fancyhdr,floatpag}
\pagestyle{fancy}
\fancyhf{}% Clear page header/footer
\fancyfoot[C]{\thepage}
%\fancyhead[LO]{Bivariate zeta functions of groups associated to unipotent group schemes}
\fancyhead[LO]{\nouppercase\leftmark}
%\fancyhead[LO]{Bivariate representation and conjugacy class zeta functions associated to nilpotent group schemes}
\fancyhead[RE]{Paula Lins}
\floatpagestyle{fancy}% Page style for float-page only
%till here
\usepackage[usenames,dvipsnames]{xcolor}

\usepackage{hyperref}
%\usepackage[usenames, dvipsnames]{color}
%\definecolor{Cyan}{rgb}{0.0, 0.72, 0.92}
\hypersetup{linkbordercolor=NavyBlue,linkcolor=NavyBlue}
\hypersetup{citebordercolor=NavyBlue,citecolor=NavyBlue}
 %\hypersetup{
     %colorlinks,
     %citecolor=NavyBlue,
     %filecolor=NavyBlue,
     %linkcolor=NavyBlue,
     %urlcolor=NavyBlue
 %}
 %\pagestyle{fancy}
 %\fancyhf{}
 %\fancyhead[C]{%
   %\footnotesize\sffamily
   %\yourname\quad
   %web: \textcolor{cyan}{\itshape\yourweb}\quad
   %\textcolor{cyan}{\youremail}}

\usepackage{upgreek}
\usepackage{mathtools}

\usepackage{frcursive}

\usepackage[style=numeric,backend=bibtex,maxnames=5]{biblatex}
\addbibresource{biblio.bib}
\renewbibmacro{in:}{}
\usepackage{blkarray}
\usepackage{multirow}

\numberwithin{equation}{section}
\theoremstyle{plain}
\newtheorem{theorem}{Theorem}
\newtheorem{thm}{Theorem}[section]
%[section]
\newtheorem{cor}[thm]{Corollary}%[section]
\newtheorem{dfn}[thm]{Definition}%[section]
\newtheorem{lem}[thm]{Lemma}%[section]
%[section]
\newtheorem{pps}[thm]{Proposition}%[section]
%[section]

\theoremstyle{remark}
\newtheorem{rmk}[thm]{Remark}%[section]
\newtheorem{exm}[thm]{Example}%[section]

\newcommand{\Z}{\mathds{Z}}
\newcommand{\C}{\mathds{C}}
\newcommand{\Q}{\mathds{Q}}

\newcommand{\N}{\mathds{N}}
\newcommand{\R}{\mathds{R}}
\newcommand{\G}{\mathbf{G}}
\newcommand{\m}{\mathbf{m}}
\newcommand{\p}{\mathfrak{p}}
\newcommand{\Pp}{\mathfrak{P}}
\newcommand{\ri}{\mathcal{O}}
\newcommand{\lri}{\mathfrak{o}}
\newcommand{\Lri}{\mathfrak{O}}
\newcommand{\g}{\mathfrak{g}}

\newcommand{\lrip}{\lri/\p^N}
\newcommand{\ric}{\ri_{\p}}
\newcommand{\op}{\lri/\p}
\newcommand{\W}{W^{\lri}}

\newcommand{\z}{\mathfrak{z}}

\newcommand{\pint}{\mathscr{Z}}
\newcommand{\cczeta}{\zeta^\textup{cc}}
\newcommand{\rzeta}{\zeta^\textup{irr}}
\newcommand{\rtzeta}{\zeta^{\widetilde{\textup{irr}}}}
\newcommand{\kzeta}{\zeta^\textup{k}}
\newcommand{\rg}{\widetilde{r}}
\newcommand{\bzf}{\mathcal{Z}}
\newcommand{\brzf}{\mathcal{Z}^\textup{irr}}
\newcommand{\bcczf}{\mathcal{Z}^\textup{cc}}
\newcommand{\bzfs}{\mathcal{Z}^{\ast}}
\newcommand{\bzfq}{\mathcal{Z}_{\G(\ri)}^{\ast,\mathcal{Q}}(s_1,s_2)}
\newcommand{\Zip}{\widetilde{\mathcal{Z}^{\ast}_{i,\p}}(s_1,s_2)}
\newcommand{\Zips}{\widetilde{\mathcal{Z}^{\ast}_{i,\p}}}
\newcommand{\Zipb}{\overline{\mathcal{Z}^{\ast}_{i,\p}}(s_1,s_2)}
\newcommand{\Zipbs}{\overline{\mathcal{Z}^{\ast}_{i,\p}}}
\newcommand{\rrel}{\widetilde{\brzf_{\G(\lri)}}(s_1,s_2)}
\newcommand{\crel}{\widetilde{\bcczf_{\G(\lri)}}(s_1,s_2)}
\newcommand{\arel}{\widetilde{\bzfs_{\G(\lri)}}(s_1,s_2)}

\newcommand{\astbc}{\mathscr{B}_{\G(\ri)}^{\ast}(s_1,s_2)}
\newcommand{\astgc}{\mathscr{G}_{\G(\ri)}^{\ast}(s_1,s_2)}
\newcommand{\astzeta}{\zeta^{\ast}}

\newcommand{\rk}{\textup{rk}}

\newcommand{\re}{\text{Re}}
\newcommand{\im}{\text{Im}}

%^{\lri}}
%^{\lri}}
%^{\lri}}

\newcommand{\ua}{u_{A}}
\newcommand{\ub}{u_{B}}

\newcommand{\cc}{{c}}

\newcommand{\irrup}{\textup{irr}}
\newcommand{\ccup}{\textup{cc}}
\newcommand{\asta}{\mathbf{a}^{\ast}}
\newcommand{\astb}{\mathbf{b}^{\ast}}
\newcommand{\astA}{A^{\ast}}
\newcommand{\astB}{B^{\ast}}
\newcommand{\poweri}{-A_{1i}^{\ast}s_1-A_{2i}^{\ast}s_2-B_{i}^{\ast}}
\newcommand{\powerj}{-A_{1j}^{\ast}s_1-A_{2j}^{\ast}s_2-B_{j}^{\ast}}
\newcommand{\dci}{A_{1i}^{\ast}s_1+A_{2i}^{\ast}s_2}
\newcommand{\dcj}{A_{1j}^{\ast}s_1+A_{2j}^{\ast}s_2}
\newcommand{\setopt}{(s_1,s_2)\in\C^2}
\newcommand{\lfct}{\mathcal{L}}
\newcommand{\prodq}{\prod_{\p \notin Q}}
\newcommand{\sumq}{\sum_{\p \notin Q}}

\newcommand{\prodr}{\prod_{i \in \Rs}}
\newcommand{\sumr}{\sum_{i \in \Rs}}

\newcommand{\sumw}{\sum_{i \in W'}}
\newcommand{\hol}{\mathscr{D}_{\G(\ri)}^{\ast}}
\newcommand{\holi}{\mathscr{D}_{\G}^{\ast}}
\newcommand{\mer}{\mathscr{M}_{\G(\ri)}^{\ast}}
\newcommand{\meri}{\mathscr{M}_{\G}^{\ast}}
\newcommand{\tmer}{\mathscr{M}_{\mathscr{G}^{\ast}_{\G(\ri)}}}
\newcommand{\mero}{\mathscr{M}_{\mathscr{G}^{\ast}_{\G(\ri)}}^1}
\newcommand{\mert}{\mathscr{M}_{\mathscr{G}^{\ast}_{\G(\ri)}}^2}
\newcommand{\D}{\mathscr{D}}
\newcommand{\Ps}{\textup{P}^{\ast}}

\newcommand{\Rs}{\mathscr{R}}
\newcommand{\Vp}{V_{\p}^{\ast}(s_1,s_2)}
\newcommand{\Vps}{V_{\p}^{\ast}}
\newcommand{\Vi}{V_{i}^{\ast}(s_1,s_2)}
\newcommand{\Vit}{\widetilde{V_{i}^{\ast}}(s_1,s_2)}
\newcommand{\Gf}{\mathscr{G}_{\G(\ri)}^{\ast}(s_1,s_2)}

\newcommand{\specp}{\textup{Spec}(\ri)\setminus\{(0)\}}
\makeatletter
\newcommand{\thickhline}{%
    \noalign {\ifnum 0=`}\fi \hrule height 1pt
    \futurelet \reserved@a \@xhline
}
\newcolumntype{"}{@{\hskip\tabcolsep\vrule width 1pt\hskip\tabcolsep}}
\makeatother

%Disjoint union
\makeatletter
\def\moverlay{\mathpalette\mov@rlay}
\def\mov@rlay#1#2{\leavevmode\vtop{%
   \baselineskip\z@skip \lineskiplimit-\maxdimen
   \ialign{\hfil$\m@th#1##$\hfil\cr#2\crcr}}}
\newcommand{\charfusion}[3][\mathord]{
    #1{\ifx#1\mathop\vphantom{#2}\fi
        \mathpalette\mov@rlay{#2\cr#3}
      }
    \ifx#1\mathop\expandafter\displaylimits\fi}
\makeatother

\newcommand{\bigcupdot}{\charfusion[\mathop]{\bigcup}{\cdot}}

\makeatletter
\renewcommand*\env@matrix[1][*\c@MaxMatrixCols c]{%
  \hskip -\arraycolsep
  \let\@ifnextchar\new@ifnextchar
  \array{#1}}
\makeatother

%Space between \hline and next entrie of table/ between last entry of table and \hline
         % = `top' strut
   % = `bottom' strut

\DeclareMathOperator{\Mat}{Mat}

\DeclareMathOperator{\class}{k}

\DeclareMathOperator{\Gl}{GL}

\usepackage{titlesec}
\setcounter{secnumdepth}{3}
 %\titleformat{\chapter}
        %{\normalfont\bfseries}{\thechapter}{1em}{}

\titleformat{\section}
       {\normalfont\bfseries}{\thesection}{1em}{}

\titleformat{\subsection}
       {\normalfont\bfseries}{\thesubsection}{1em}{}
			
\titleformat{\subsubsection}
       {\normalfont\bfseries}{\thesubsubsection}{1em}{}

\titleformat{\paragraph}
{\normalfont\normalsize\bfseries}{\theparagraph}{1em}{}
\titlespacing*{\paragraph}
{0cm}{2.25ex plus 1ex minus.2ex}{1ex plus .2ex}

%to reduce space between enviroments like align:
\usepackage{etoolbox}
\newcommand{\zerodisplayskips}{%
  \setlength{\abovedisplayskip}{1pt}%
  \setlength{\belowdisplayskip}{1pt}%
  \setlength{\abovedisplayshortskip}{1pt}%
  \setlength{\belowdisplayshortskip}{1pt}}
\appto{\normalsize}{\zerodisplayskips}
\appto{\small}{\zerodisplayskips}
\appto{\footnotesize}{\zerodisplayskips}

% \title{Bivariate representation and conjugacy class zeta functions of groups associated to unipotent group schemes}
% \date{\today}
% \author{Paula Lins}
\title[paper]{Analytic properties of bivariate representation and conjugacy class zeta functions of finitely generated nilpotent groups\vspace{-3ex}}
\author{Paula Lins}
\address{Fakult\"at f\"ur Mathematik -- Universit\"at Bielefeld, Germany}
\email{lins@math.uni-bielefeld.de}
%\thanks{Universit\"at Bielefeld}
\date{\today}
\subjclass[2010]{11M41, 11M32, 20F18, 32D15, 22E55, 20E45.}
\keywords{Finitely generated nilpotent groups, zeta functions, conjugacy classes.}

\begin{document}
\begin{abstract}
Let~$\G$ be a unipotent group scheme defined in terms of a nilpotent Lie lattice over the ring~$\ri$ of integers of a number field. 
We consider bivariate zeta functions of groups of the form $\G(\ri)$ encoding, respectively, the numbers of isomorphism classes 
of irreducible complex representations of finite dimensions and the numbers of conjugacy classes of congruence quotients of the associated groups. 
We show that the domains of convergence and meromorphic continuation of these zeta functions of groups $\G(\ri)$ are independent of the number field~$\ri$ considered, up to finitely 
many local factors.
\end{abstract}
\maketitle 
\vspace{-0.5cm}
%\tableofcontents
%%%%%%%%%%%%%%%%%%%%%%%%%%%%%%%%%%%%%%%%%%%%%%%%%%%%%%%%%%%%%%%%%%%%%%%%%%%%%%%%%%%%%%%%%%%%%%%%%%%%%%%%%%%%%%%%%%%%%%%%%%%%%%%%%%%%%%%%%%%%%%%%%%%%%%%%%%%%%%%%%%%%%%%%%%%%%%%%%
%																						%
%						Section 1 - Introduction and statement of main results										%
%																						%
%%%%%%%%%%%%%%%%%%%%%%%%%%%%%%%%%%%%%%%%%%%%%%%%%%%%%%%%%%%%%%%%%%%%%%%%%%%%%%%%%%%%%%%%%%%%%%%%%%%%%%%%%%%%%%%%%%%%%%%%%%%%%%%%%%%%%%%%%%%%%%%%%%%%%%%%%%%%%%%%%%%%%%%%%%%%%%%%%
\section{Introduction and statement of main results}
\thispagestyle{empty}
In \cite{PL18} we introduced two bivariate zeta functions which generalise the (univariate) zeta functions counting the following data of a group~$G$: 
\begin{align*}
  r_n(G)&=|\{\text{isomorphism classes of $n$-dimensional irreducible complex}\\
  &\phantom{r(G)}\text{representations of }G\}|,\\
  c_n(G)&=|\{\text{conjugacy classes of $G$ of cardinality }n\}|.
\end{align*}
In the context of topological groups, we only consider continuous representations.
If all numbers $r_n(G)$, respectively, $c_n(G)$ are finite---for instance if~$G$ is a finite group---one can define the following zeta functions encoding these data. 
\begin{dfn}\label{univa} Let~$s$ be a complex variable. The \emph{representation} and the \emph{conjugacy class zeta functions} of the group~$G$ are, respectively, 
\begin{align*}
  \rzeta_{G}(s)=\sum_{n=1}^{\infty}r_n(G)n^{-s} \vspace{0.7cm} \text{ and }\hspace{0.5cm}
  \cczeta_{G}(s)=\sum_{n=1}^{\infty}\cc_n(G)n^{-s}.
\end{align*}
\end{dfn}
We are, however, interested in finitely generated, torsion-free nilpotent groups ($\mathcal{T}$-groups for short) and the data above is not finite for such groups. 
We consider bivariate zeta functions encoding these data for the congruence quotients of the $\mathcal{T}$-group considered; 
one of the variables concerns the congruence quotients, while the other variable concerns the respective data
%---irreducible complex representation of a certain dimension or conjugacy class of certain size---
of these quotients.

Before we recall these bivariate zeta functions, we explain the groups of interest in this work, which are groups obtained from nilpotent Lie lattices, 
constructed as follows.
Denote by~$\ri$ the ring of integers of a number field~$K$. 
An \emph{$\ri$-Lie lattice} is a free and finitely generated $\ri$-module~$\Lambda$ together with an antisymmetric bi-additive form $[~,~]$ which satisfies the Jacobi identity. 
Denote by $\Lambda'=[\Lambda,\Lambda]$ the derived Lie sublattice of~$\Lambda$. 

For each $\ri$-algebra~$R$, set $\Lambda(R):=\Lambda \otimes_{\ri} R$. 
If $\Lambda$ has nilpotency class~$c$ and satisfies $\Lambda' \subseteq c!\Lambda$, the Campbell-Baker-Hausdorff formula defines a group operation~$\ast$ in $\Lambda(R)$,
so that the group $(\Lambda(R), \ast)$ is nilpotent with same nilpotency class~$c$ as~$\Lambda$.
This defines a unipotent group scheme $\G=\G_{\Lambda}$ over~$\ri$; see~\cite[Section~2.1.2]{StVo14}. 
In case of nilpotency class~$2$, there is a different construction of such~$\G$ which does not require $\Lambda' \subseteq 2\Lambda$, but in case this condition is satisfied, 
the unipotent group schemes obtained via this construction coincide with the latter ones; see~\cite[Section~2.4.1]{StVo14}.

The group $\G(\ri)$ is a $\mathcal{T}$-group of nilpotency class~$c$.
If~$R$ is a finitely generated pro-$p$ ring, for instance the completion~$\ric$ of~$\ri$ at some nonzero prime ideal~$\p$, then the group $\G(R)$ is a finitely 
generated nilpotent pro-$p$ group of nilpotency class~$c$.

%All $\mathcal{T}$-groups are virtually of the form $\G(\Z)$, as explained in~\cite[Section~5]{DuVo14}, \cite[Remark~2.4]{StVo14}, and~\cite[Remark~3.1]{PL18}. 

\begin{dfn}\label{biva} The \emph{bivariate representation} and the \emph{bivariate conjugacy class zeta functions} of $\G(\ri)$ are, respectively,
\begin{align*} 
\brzf_{\G(\ri)}(s_1,s_2)&=\sum_{(0) \neq I \unlhd \ri }\rzeta_{\G(\ri/I)}(s_1)|\ri:I|^{-s_2}\text{ and}\\
\bcczf_{\G(\ri)}(s_1,s_2)&=\sum_{(0) \neq I \unlhd \ri }\cczeta_{\G(\ri/I)}(s_1)|\ri:I|^{-s_2},
\end{align*}
where $s_1$ and $s_2$ are complex variables. 
\end{dfn}
All $\mathcal{T}$-groups are virtually of the form $\G(\Z)$, as explained in~\cite[Section~5]{DuVo14} and \cite[Remark~2.4]{StVo14}. In~\cite[Remark~3.1]{PL18}, we give 
a possible definition of bivariate representation and bivariate conjugacy class zeta functions of abstract $\mathcal{T}$-groups in terms of finite-index subgroups of 
the form~$\G(\Z)$. 
Although this definition does depend on the chosen subgroup, two distinct subgroups yield bivariate zeta functions which coincide for all but finitely many local factors.
By `local factor' we mean the functions 
\begin{align}\label{localfactors}\bzfs_{\G(\ric)}(s_1,s_2)= \sum_{N=0}^{\infty}\astzeta_{\G(\lri/\p^N)}(s_1)|\lri:\p|^{-Ns_2},\end{align}
where $\ast\in\{\irrup,\ccup\}$ and $\ric$~denotes the completion of~$\ri$ at the nonzero prime ideal~$\p$. 
According to~\cite[Proposition~2.1]{PL18}, %these are the terms appearing in the Euler decompositions 
\begin{equation}\label{Euler}\bzfs_{\G(\ri)}(s_1,s_2)=\prod_{\p \in \specp} \bzfs_{\G(\ric)}(s_1,s_2).\end{equation}

The zeta functions of Definition~\ref{biva} are not defined only formally, they converge for~$s_1$ and~$s_2$ with sufficiently large real part; cf.~\cite[Proposition~2.4]{PL18}.

Our main result concerns the domain of convergence and meromorphy of these bivariate zeta functions; see Section~\ref{holmer} for definitions. 
We show that there exists a finite set~$\mathcal{Q}$ of prime ideals of~$\ri$ 
such that, for each $\ast\in\{\irrup,\ccup\}$, the domains of convergence and meromorphy of the bivariate function
\begin{equation}\label{mostprimes}
\bzfq=\prod_{\p \notin \mathcal{Q}} \bzfs_{\G(\ric)}(s_1,s_2),
\end{equation}
are independent of the ring of integers~$\ri$. This means that, for each finite extension $L/K$ with ring of integers $\ri_{L}$, the domains of convergence and meromorphic 
continuation of $\bzfs_{\G(\ri_L)}(s_1,s_2)$, up to finitely many local factors, are the same as the ones of $\bzfq$.
\begin{theorem}\label{thmA} Denote by $\hol$ the domain of convergence of $\bzfq$. This function admits meromorphic continuation to a domain $\mer \subset \C^2$ 
and for each finite extension $L/K$ with ring of integers~$\ri_{L}$ there exists a finite subset 
$\mathcal{Q}_{L} \subset \textup{Spec}(\ri_{L})$ such that the bivariate function 
\[\bzf_{\G(\ri_{L})}^{\ast,\mathcal{Q}_L}(s_1,s_2)=\prod_{\Pp \notin \mathcal{Q}_{L}} \bzfs_{\G(\ri_{L,\Pp})}(s_1,s_2),\]
where $\ri_{L,\Pp}$ is the completion of $\ri_{L}$ at the nonzero prime ideal $\Pp$, satisfies:
 \begin{enumerate} 
  \item \label{parti} The domain convergence of $\bzf_{\G(\ri_{L})}^{\ast,\mathcal{Q}}(s_1,s_2)$ coincides with~$\hol$ and  %That is, $\hol=\holi$,
  \item \label{partii} $\bzf_{\G(\ri_{L})}^{\ast,\mathcal{Q}}(s_1,s_2)$ admits meromorphic continuation to~$\mer$.  %That is, $\mer=\meri$.
  \end{enumerate}
\end{theorem}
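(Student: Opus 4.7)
The underlying idea is a uniformity principle for the local factors: after possibly enlarging~$\mathcal{Q}$, one should exhibit a single rational function $W^{\ast}(X,Y_{1},Y_{2})\in\Q(X,Y_{1},Y_{2})$, depending only on the Lie lattice~$\Lambda$, such that for every prime $\p\notin\mathcal{Q}$ of residue cardinality $q_{\p}$
\[\bzfs_{\G(\ric)}(s_1,s_2)=W^{\ast}(q_{\p},q_{\p}^{-s_1},q_{\p}^{-s_2}).\]
Such a formula should follow by resolving singularities of an auxiliary $\Z$-scheme attached to~$\Lambda$ and invoking a Denef-type formula for the $\p$-adic integrals that express $\astzeta_{\G(\lri/\p^N)}(s_1)$, in the spirit of~\cite{StVo14,PL18}. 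The crucial point is that the resolution can be chosen once and for all over~$\Z$, so that base change to any finite extension $L/K$ leaves~$W^{\ast}$ unchanged and merely enlarges the set of bad primes to some finite $\mathcal{Q}_{L}\subset\textup{Spec}(\ri_{L})$. Every prime $\Pp$ of $\ri_{L}$ above a good rational prime then contributes the factor $W^{\ast}(q_{\Pp},q_{\Pp}^{-s_1},q_{\Pp}^{-s_2})$ with $q_{\Pp}=|\ri_{L}/\Pp|$.

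Given such a uniform~$W^{\ast}$, a conic subdivision of its Newton polytope yields an expansion of $\bzfq$ as a finite $\Q$-linear combination of Euler products of the shape
\[\prodq\bigl(1-q_{\p}^{a-b_{1}s_{1}-b_{2}s_{2}}\bigr)^{-1},\qquad(a,b_{1},b_{2})\in\Z^{3},\]
each being a translate of the Dedekind zeta function~$\ddkzeta_{K}$ of~$K$ (up to finitely many local corrections). Since $\ddkzeta_{K}$ has precisely one pole (at $s=1$) and extends meromorphically to all of~$\C$ for every number field~$K$, both the region of convergence and the meromorphic extension of each building block depend only on the linear form $a-b_{1}s_{1}-b_{2}s_{2}$, not on the arithmetic of~$K$.

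Parts~\ref{parti} and~\ref{partii} would then follow by taking~$\hol$ to be the intersection of the open half-spaces $\{(s_{1},s_{2})\in\C^{2}\mid b_{1}\re(s_{1})+b_{2}\re(s_{2})>a+1\}$ corresponding to the dominant terms of the expansion, and~$\mer$ to be the complement of the union of the pole hyperplanes produced by the translated Dedekind zeta functions together with the finitely many additional poles introduced by the cone decomposition. Both regions are described purely in terms of the triples $(a,b_{1},b_{2})$ furnished by the Newton polytope of~$W^{\ast}$, which is field-independent; hence the same description applies verbatim to $\bzf_{\G(\ri_{L})}^{\ast,\mathcal{Q}_{L}}(s_1,s_2)$, once the bad primes of~$L$ are collected into~$\mathcal{Q}_{L}$.

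The main obstacle is the uniform rationality step: one must control the geometry of the auxiliary $\Z$-scheme and the behavior of its reduction modulo~$\Pp$ simultaneously for primes of arbitrary finite extensions of~$K$, checking that the finitely many exceptional places absorbed into $\mathcal{Q}_{L}$ truly suffice for the formula $W^{\ast}(q_{\Pp},q_{\Pp}^{-s_{1}},q_{\Pp}^{-s_{2}})$ to hold. Once this is secured, the remaining work reduces to the essentially formal manipulation of Dedekind zeta functions described above, which is insensitive to the choice of number field.
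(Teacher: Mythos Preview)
Your proposal contains a genuine gap at the ``uniform rationality'' step, and this gap is not merely technical. You posit a single rational function $W^{\ast}(X,Y_{1},Y_{2})\in\Q(X,Y_{1},Y_{2})$ with $\bzfs_{\G(\ric)}(s_1,s_2)=W^{\ast}(q_{\p},q_{\p}^{-s_1},q_{\p}^{-s_2})$ for all good~$\p$. The Denef-type formula that actually comes out of the principalization (Proposition~\ref{ppsDenef}) has the shape
\[Z(\underline{s})=\frac{(1-q^{-1})^{d+1}}{q^{{d\choose 2}}}\sum_{U\subseteq T}c_{U}(\lri/\p)\,(q-1)^{|U|}\,\Xi_{U}(q,\underline{s}),\]
where the $\Xi_{U}$ are indeed rational in $q,q^{-s_1},q^{-s_2}$, but the coefficients $c_{U}(\lri/\p)$ are numbers of $\lri/\p$-rational points of varieties over the residue field. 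These point counts are \emph{not} polynomial in~$q$ in general, so no single $W^{\ast}$ exists. Consequently your reduction to translates of the Dedekind zeta function~$\ddkzeta_{K}$ cannot go through as stated.

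The paper's proof absorbs exactly this difficulty. For convergence, the Lang-Weil estimate (Lemma~\ref{langweil}) replaces $c_{U}(\lri/\p)$ by $q^{d_{U}}$ up to an error of size $O(q^{d_{U}-1/2})$, which is enough to pin down the domain~$\holi$ as an intersection of explicit half-spaces $\D_{i}$ depending only on the numerical data of the resolution (hence field-independent). For meromorphic continuation the argument is more delicate: one cannot simply factor out Dedekind zeta functions, because the ``main term'' of $c_{U}(\lri/\p)$ is $\sum_{b}l_{\p}(F_{U,b})q^{d_{U}}$, and the numbers $l_{\p}(F_{U,b})$ of absolutely irreducible components of the reduction fluctuate with~$\p$. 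The paper packages these into Artin $L$-functions $V_{b,i}(s_1,s_2)$ (Proposition~\ref{Artin}), whose meromorphic continuation is what ultimately pushes past the boundary of~$\holi$; a further approximation argument via the relation~$\equiv_{\mathcal{D}}$ (Lemmata~\ref{D2}--\ref{D4}) controls the discrepancy between the true Euler factors and the Artin building blocks. Your sketch, as written, would only cover the very special situation where all the relevant varieties have polynomial point counts; to handle the general case you need to replace ``translated Dedekind zeta functions'' by ``Artin $L$-functions plus Lang--Weil error terms'' throughout.
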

In particular, the domains~$\hol$ and~$\mer$ are independent of~$\ri$. We hence write simply~$\holi=\hol$ and~$\meri=\mer$. 
The proof of Theorem~\ref{thmA} may be found in Section~\ref{pthmA}
%\subsection{Two univariate zeta functions} %se eu consequir provar para todos os fatores, falar da TRZF e que em nilpotencia 2 os meus resultados coincidem com DuVo14
\subsection{Class number zeta functions}
The \emph{class  number} of a group~$G$ is the total number~$\class(G)$ of its conjugacy classes.
If~$G$ is finite, the number~$\class(G)$ coincides with the total number of its irreducible complex characters, and hence $\class(G)=\cczeta_G(0)=\rzeta_G(0)$. 

According to~\cite[Section~1.2]{PL18}, the bivariate zeta functions of Definition~\ref{biva} associated to~$\G(\ri)$ specialise to the (univariate) 
\emph{class number zeta function} of~$\G(\ri)$, that is, the generating function 
%which are zeta functions encoding the class numbers of principal congruence quotients of the group considered. More precisely, the class number zeta function of $\G(\ri)$ is 
\begin{equation}\label{specialisation}\kzeta_{\G(\ri)}(s):=\sum_{\{0\} \neq I \unlhd \ri}\class(\G(\ri/I))|\ri:I|^{-s}=\brzf_{\G(\ri)}(0,s)=\bcczf_{\G(\ri)}(0,s),\end{equation}
where $s$ is a complex variable. This specialisation applied to Theorem~\ref{thmA} yields the following. 
% \begin{cor} The domain $\mathcal{D}_{\G,\class}^{\ast}$ of meromorphy of the class number zeta function $\kzeta_{\G(\ri)}(s)$ is independent of the ring of integers $\ri$,
%  up to finitely many factors. 
% Moreover, this function admits meromorphic continuation to a domain, also independent of $\ri$, strictly containing $\mathcal{D}_{\G,\class}^{\ast}$, up to finitely many factors. 
% \end{cor}
\begin{cor}
 Up to finitely many local factors, the class number zeta function of~$\G(\ri)$ converges on a domain~$\mathcal{D}_{\G,\class}^{\ast}\subseteq \C$, and admits meromorphic
 continuation to a larger domain~$\mathcal{M}_{\G,\class}^{\ast}$ with both~$\mathcal{D}_{\G,\class}^{\ast}$ and~$\mathcal{M}_{\G,\class}^{\ast}$ independent of~$\ri$.
\end{cor}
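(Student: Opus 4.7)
The plan is to obtain the corollary as a direct specialisation of Theorem~\ref{thmA} by restricting the bivariate functions to the hyperplane $s_1=0$. By~(\ref{specialisation}), on the level of Dirichlet series we have $\kzeta_{\G(\ri)}(s)=\brzf_{\G(\ri)}(0,s)=\bcczf_{\G(\ri)}(0,s)$, and the same identity holds term-by-term for each Euler factor~(\ref{localfactors}) since $\astzeta_{\G(\lri/\p^N)}(0)=\class(\G(\lri/\p^N))$ for both $\ast\in\{\irrup,\ccup\}$. Hence, letting $\mathcal{Q}$ be the finite set of primes provided by Theorem~\ref{thmA}, one has the identity $\prod_{\p \notin \mathcal{Q}} \sum_{N\geq 0}\class(\G(\lri/\p^N))|\lri:\p|^{-Ns}=\bzfq|_{s_1=0}$, which is precisely the class number zeta function of $\G(\ri)$ minus the finitely many local factors at primes in $\mathcal{Q}$.

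Next, I would define the candidate domains in $\C$ by slicing the two-dimensional domains of Theorem~\ref{thmA}: set
\[
  \mathcal{D}_{\G,\class}^{\ast}=\{s\in\C\mid (0,s)\in\holi\}\quad\text{and}\quad \mathcal{M}_{\G,\class}^{\ast}=\{s\in\C\mid (0,s)\in\meri\}.
\]
Both are independent of~$\ri$ because $\holi$ and $\meri$ are, by Theorem~\ref{thmA}. The set $\mathcal{D}_{\G,\class}^{\ast}$ is non-empty: for each local factor, setting $s_1=0$ in~(\ref{localfactors}) yields the series $\sum_{N\geq 0}\class(\G(\lri/\p^N))|\lri:\p|^{-Ns_2}$, which converges absolutely for $\re(s_2)$ large enough, and the standard arguments underlying~\cite[Proposition~2.4]{PL18} show that the corresponding Euler product converges on a non-empty right half-plane. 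By the identity above, this half-plane sits inside both the domain of convergence of $\bzfq$ restricted to $s_1=0$ and the set $\mathcal{D}_{\G,\class}^{\ast}$.

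On $\mathcal{D}_{\G,\class}^{\ast}$ the restricted class number zeta function then agrees with $\bzfq(0,s)$, which establishes convergence. For the meromorphic continuation, I would restrict the meromorphic continuation of $\bzfq(s_1,s_2)$ on~$\meri$ to the analytic slice $\{s_1=0\}\cap\meri$. The only potential obstacle is ensuring this restriction is well-defined, i.e.\ that the polar divisor of the continuation does not contain the hyperplane $\{s_1=0\}$ entirely. This is guaranteed because the function is already holomorphic on a non-empty open subset of $\{s_1=0\}$, namely on the intersection of this hyperplane with~$\holi$; hence the restriction is a genuine meromorphic function on $\mathcal{M}_{\G,\class}^{\ast}$.

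I expect the main technical point to be this last verification, namely that the two-variable meromorphic continuation slices nicely to a one-variable meromorphic function. In practice, the meromorphic continuation produced by the proof of Theorem~\ref{thmA} will be expressed in terms of translates of multivariate Dedekind-type zeta functions (or Igusa-type products), whose polar loci are unions of affine hyperplanes; it then suffices to check that none of these hyperplanes equals $\{s_1=0\}$, which is immediate from the explicit form of the poles. All other steps reduce to bookkeeping with the specialisation~(\ref{specialisation}) and are routine.
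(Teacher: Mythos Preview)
Your approach is correct and matches the paper's: the corollary is stated immediately after~(\ref{specialisation}) with only the remark ``This specialisation applied to Theorem~\ref{thmA} yields the following,'' so the paper gives no further argument beyond the slicing you describe. Your proposal in fact supplies more detail than the paper does, including the check that the polar locus does not contain the hyperplane $\{s_1=0\}$; this is a reasonable point to make explicit, and your justification via the existence of a holomorphic region on that hyperplane (or via the explicit form of the poles from Proposition~\ref{goodp}) is sound.
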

The term `conjugacy class zeta function' is sometimes used for what we call `class number zeta function'; see for instance~\cite{BDOP,Ro17, ask2, duSau05}.
\subsection{Related research}
Domains of convergence of zeta functions of groups $\G(\ri)$ being independent of the ring of integers $\ri$ is not a general property. % of zeta functions of groups. 
The normal subgroup zeta function of the Heisenberg group~$\mathbf{H}(\ri)$ of the $3\times 3$-unitriangular matrices of~$\ri$, for instance, has abscissa of convergence depending 
on the degree of the extension $|K:\Q|$; see~\cite[Theorem~1.2]{ScVoI} and \cite[Theorems~3.2 and~3.8]{ScVoII}.

In~\cite[Corollary~1.3]{StVo14}, Stasinski and Voll show that the twist representation zeta functions of three infinite families of nilpotent groups of the form~$\G(\ri)$ 
of class~$2$ generalising the Heisenberg group $\mathbf{H}(\ri)$ of the $3\times 3$-unitriangular matrices of~$\ri$ have domains of convergence which are independent of~$\ri$ and 
admit meromorphic continuation to the whole~$\C$.
This is the zeta function counting the numbers $\rg_n(\G(\ri))$ of irreducible complex characters of dimension $n$ of $\G(\ri)$ up to tensoring by one-dimensional characters.
That is,
 \[\rtzeta_{\G(\ri)}(s)=\sum_{n=1}^{\infty}\rg_n(G)n^{-s},\] where $s$ is a complex variable.

Based on this corollary, Stasinski and Voll asked whether the abscissae of convergence of the twist representation zeta functions of all groups of 
the form~$\G(\ri)$ are independent of~$\ri$ and whether they admit meromorphic continuations; cf.~\cite[Question~1.4]{StVo14}.
This question was answered by Dung and Voll~\cite{DuVo14}. In~\cite[Theorem~A]{DuVo14}, they show that the twist representation zeta function of~$\G(\ri)$ has rational 
abscissa of convergence $\alpha(\G)$, which is independent of~$\ri$, and can be meromorphically continued to a domain 
$\{s \in \C\mid \re(s)>\alpha(\G)-\delta(\G)\}$, for some $\delta(\G) \in \Q_{>0}$ depending only on~$\G$. 
They also showed that, for each (abstract) $\mathcal{T}$-group $G$, the abscissa of convergence~$\alpha(G)$ of its twist representation zeta function is rational 
and that this zeta function admits meromorphic continuation to a half-plane of the form $\{s\in\C \mid \re(s)>\alpha(G)-\delta\}$, for some $\delta>0$; 
see~\cite[Theorem~B]{DuVo14}.

We remark that in case of nilpotency class~$2$, bivariate representation zeta functions specialise to twist representation zeta functions; see~\cite[Section~4.3]{PL18}. 

In~\cite[Theorems~1.4 and~1.6]{PL18II}, formulae are given for the bivariate representation and for the bivariate conjugacy class zeta functions of the three infinite families 
investigated in~\cite{StVo14}. 
These formulae show that the domains of convergence and meromorphy of these zeta functions are independent of the ring of integers considered, which suggested that analogous 
phenomena to the one observed by Dung and Voll in~\cite[Theorem~A]{DuVo14} also hold for bivariate representation and bivariate conjugacy class zeta functions.

\subsection{Structure of the paper}
In the preliminary Section~\ref{convgen}, we recall basic concepts and results about functions on two comples variables, double series and convergence and meromorphy of nonuniform 
Euler products on two variables. 

Section~\ref{spadic} is divided in two parts: in section~\ref{padic}, we recall from~\cite{PL18} how to write most local factors of the bivariate 
zeta functions of Definition~\ref{biva} in terms of $p$-adic integrals and, in Section~\ref{Denef}, we show the above mentioned $p$-adic integrals may be described in terms of 
formulae of Denef type~\eqref{deneftype}, which will be used in Section~\ref{pthmA} to determine the domains of convergence and meromorphy of the 
bivariate zeta functions. 

In Section~\ref{pthmA}, we prove the main theorem, giving the proof of Theorem~\ref{thmA}\eqref{parti} in Section~\ref{holomorphic} and of Theorem~\ref{thmA}\eqref{partii} 
in Section~\ref{meromorphic}.

%%%%%%%%%%%%%%%%%%%%%%%%%%%%%%%%%%%%%%%%%%%%%%%%%%%%%%%%%%%%%%%%%%%%%%%%%%%%%%%%%%%%%%%%%%%%%%%%%%%%%%%%%%%%%%%%%%%%%%%%%%%%%%%%%%%%%%%%%%%%%%%%%%%%%%%%%%%%%%%%%%%%%%%%%%%%%%%%%
%																						%
%						Convergence of double Dirichlet series and commensurability									%
%																						%
%%%%%%%%%%%%%%%%%%%%%%%%%%%%%%%%%%%%%%%%%%%%%%%%%%%%%%%%%%%%%%%%%%%%%%%%%%%%%%%%%%%%%%%%%%%%%%%%%%%%%%%%%%%%%%%%%%%%%%%%%%%%%%%%%%%%%%%%%%%%%%%%%%%%%%%%%%%%%%%%%%%%%%%%%%%%%%%%%
\section{Background}\label{convgen} % on double series and bivariate Euler products}\label{convgen}
\subsection{Two complex variables}\label{holmer}
In this section, we recall briefly the meaning of holomorphy and meromorphy for complex functions on two variables. We refer the reader to~\cite{FiLi12, Gauthier} for further 
information about functions on several complex variables.
We call \emph{domain} a connected open subset of~$\C^2$ (with the usual topology). 
\begin{dfn}
Let $U\subseteq \C^2$ be an open set. A continuous function $f:U \to \C$ is \emph{holomorphic} if it is holomorphic in each variable. Equivalently, $f$ is holomorphic 
if it satisfies the system of homogeneous equations $\tfrac{\partial f}{\partial \overline{z_j}}=0$, for $j=1,2$, where for $\re(z_j)=x_j$ and $\im(z_j)=y_j$, 
\[\frac{\partial }{\partial \overline{z_j}}=\frac{1}{2}\left(\frac{\partial}{\partial x_j}+i\frac{\partial}{\partial y_j}\right).\]
\end{dfn}
\begin{exm}
The function $f:\C^2 \to \C$ given by $f(z_1,z_2)=az_1+bz_2+c$, where $a$ and $b$ are nonzero real numbers and $c \in \R$, is holomorphic on the whole~$\C^2$. 
Its zero set is 
\[V(f)=\{(z_1,z_2) \in \C^2 \mid az_1+bz_2=-c\}.\]
In particular, the function $g = \tfrac{1}{f}$ has set of poles $V(f)$. 
\end{exm}

In the one variable case, a function is meromorphic on a certain domain if it is locally the quotient of two holomorphic functions such that the denominator is nonzero. 
In particular, a meromorphic function may only have finite-order isolated poles.  
In the example above, we see that the rational function~$g$ has infinitely many poles and none of them is isolated. However, we shall see that $g$ is a meromorphic function on
the whole~$\C^2$. This is because meromorphy on several complex variables allows for set of (non-isolated) poles, as long as this set is sufficiently ``small''. 
More precisely, we call a subset $M$ of a domain $\Omega \subset \C^2$ \emph{thin} if it is relatively closed on~$\Omega$, that is, an intersection of a closed subset with any set, 
and if for each $\mathbf{z}=(z_1,z_2)\in \C^2$ there is a neighbourhood $U_{\mathbf{z}}$ of $\mathbf{z}$ and a holomorphic function $f_{\mathbf{z}}$ such that 
$M \cap U_{\mathbf{z}} \subset V(f_{\mathbf{z}})$. 
Particularly, if $f : \Omega \to \C$ is holomorphic, then $V(f):=\{\mathbf{z}\in \C^2 \mid f(\mathbf{z})=0\}$ is a thin set. 

\begin{dfn}\label{meromorphy}\cite[Definition~2.1 of Chap.~VI]{FiLi12} A \emph{meromorphic function} on a domain~$\Omega \subset \C^2$ is a function 
$f: \Omega \to \C$ such that there exists a thin set~$M \subset \Omega$ for which $f$ is holomorphic on $\Omega\setminus M$ and, for each $\mathbf{z}_0\in \Omega$, 
there exist a neighbourhood $U_{\mathbf{z}_0}$ of $\mathbf{z}_0$ in $\Omega$ and holomorphic functions $g,h : U_{\mathbf{z}_0} \to \C$ with 
$g\not\equiv 0$ such that $V(h)\subset M$ and \[f(\mathbf{z})=\frac{g(\mathbf{z})}{h(\mathbf{z})}, \text{  for }\mathbf{z}\in U\setminus M.\]
\end{dfn}

In particular, we see that if $f(\mathbf{z})=\tfrac{g(\mathbf{z})}{h(\mathbf{z})}$ with $g,h: \Omega \to \C$ holomorphic and $h \not\equiv 0$, then, since $V(h)$ is thin, 
$f$ is meromorphic on~$\Omega$.

The following result states that the complement of a thin set in a domain is also a domain. %open minus closed is open :)
\begin{pps}\cite[Proposition~1.3 of Chap.~VI]{FiLi12}
Let~$M$ be a thin subset of a domain $\Omega \subseteq \C^2$. Then~$\Omega \setminus M$ is connected.
\end{pps}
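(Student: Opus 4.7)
The plan is to reduce the statement to the local question of showing that the complement of the zero set of a non-trivially vanishing holomorphic function on a connected open subset of $\C^2$ is connected, and then to handle that local question by a standard complex-line argument exploiting the fact that such a zero set has real codimension~$2$.

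For the global reduction, I would proceed as follows. Fix two points $p,q \in \Omega \setminus M$. Since $\Omega$ is an open connected subset of $\C^2$, it is path-connected, so I can choose a continuous path $\gamma:[0,1]\to \Omega$ joining $p$ to $q$. Its image is compact, so by the definition of thinness I can cover $\gamma([0,1])$ by finitely many open sets $U_1,\dots,U_k \subseteq \Omega$ on each of which there is a holomorphic function $f_j \not\equiv 0$ with $M \cap U_j \subseteq V(f_j)$. Using a Lebesgue-number partition $0=t_0 < t_1 < \cdots < t_k = 1$ with $\gamma([t_{j-1},t_j]) \subseteq U_j$, and choosing waypoints $q_j \in U_j \cap U_{j+1}$ that lie outside $M$ (which is possible since $V(f_j)\cup V(f_{j+1})$ has empty interior in the open set $U_j\cap U_{j+1}$), the problem reduces to joining $q_{j-1}$ and $q_j$ by a path inside $U_j$ that avoids $V(f_j)$.

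For this local key lemma, suppose $U \subseteq \C^2$ is open and connected and $f:U \to \C$ is holomorphic with $f\not\equiv 0$. Given $a,b \in U\setminus V(f)$, I would first join them by a polygonal path in $U$ and then argue on each segment separately. For a segment contained in a complex line $\ell$, the restriction $f\vert_{\ell \cap U}$ is either identically zero or has only isolated zeros; in the latter case a generic parallel translate of the segment avoids $V(f)$, and the three-piece perturbed path still lies in $U$ for translates small enough. If $f$ happens to vanish identically along the chosen direction, one slightly rotates the direction; the set of directions $v$ for which $f$ vanishes along every parallel line through points near the segment has empty interior in the space of directions, as otherwise the Taylor expansion of $f$ at some interior point would force $f \equiv 0$ on a neighbourhood and hence, by analytic continuation, on all of $U$.

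The main obstacle I expect is precisely this degenerate case: ensuring that the rotation of lines needed to avoid $V(f)$ can be carried out while keeping the path inside $U$ and so that the zeros to be dodged stay under quantitative control. A clean way around this is to invoke the Weierstrass preparation theorem, which expresses $V(f)$ locally as the zero set of a distinguished polynomial of finite degree in one coordinate, with coefficients holomorphic in the other; then, for each fixed value of the transverse coordinate off a thin exceptional set, one has only finitely many points of $V(f)$ to avoid, and path-connectedness of the complement in a small polydisc follows. With this local statement established, iterating the waypoint construction over $U_1,\dots,U_k$ produces a continuous path from $p$ to $q$ in $\Omega \setminus M$, which completes the proof.
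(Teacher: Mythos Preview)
The paper does not prove this proposition; it merely quotes it from \cite[Proposition~1.3 of Chap.~VI]{FiLi12} as background. So there is no ``paper's own proof'' to compare against.

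That said, your argument is the standard one and is essentially correct. The global reduction via a compact path, a finite cover by charts $U_j$ with $M\cap U_j\subseteq V(f_j)$, and waypoints chosen off the relevant zero sets is the canonical strategy; the local lemma that $U\setminus V(f)$ is connected for $f$ holomorphic and not identically zero is exactly what is needed, and invoking Weierstrass preparation to handle it cleanly is the right move.

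One small point worth tightening: to apply the local lemma on $U_j$ you need the two waypoints $q_{j-1},q_j$ to lie in $U_j\setminus V(f_j)$, not merely in $U_j\setminus M$; since $M\cap U_j\subseteq V(f_j)$ the inclusion goes the wrong way for this to be automatic. You already handle this for the intermediate waypoints by choosing $q_j\notin V(f_j)\cup V(f_{j+1})$. For the endpoints $p$ and $q$, the easiest fix is to note that $M$ is relatively closed in $\Omega$, so you may take the first and last charts $U_1,U_k$ to be small balls around $p,q$ disjoint from $M$, with $f_1,f_k$ nonvanishing constants. With that adjustment the chain of local connections goes through without difficulty.
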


\subsection{Double series}\label{dseries}
In this section, we recall some properties of double series which are needed. We refer the reader to~\cite[Section~7]{GhLi} for further results and definitions on double sequences 
and  double series. For simplicity we write $(a_{m,n})=(a_{m,n})_{m,n\in\N}$.

We observe that a (single) series $(a_n)_{n\in \N}$ can be regarded as a double series $(a_{m,n})$ by defining $a_{1,n}=a_n$, and $a_{m,n}=0$, for all $n \in \N$ and 
$m\in \N_{>1}$.
In particular, the results on double series also hold for (single) series.
The converse does not hold. For instance, in contrast with single sequences, a convergent double sequence need not be bounded.
An example in which this property fails is the double sequence of terms $a_{n,1}=n$ and $a_{n,m}=1$, for $n\in \N$ and $m \in \N_{>1}$. 

However, a double series $\sum\sum_{(m,n)} a_{m,n}$ with nonnegative coefficients is convergent 
if and only if the double sequence $(A_{n,m})$ of its partial sums $A_{m,n}:=\sum_{k=1}^{m}\sum_{l=1}^{n}a_{k,l}$ is bounded above; \cite[Proposition~7.14]{GhLi}.
Also, a monotonic double sequence is convergent if and only if it is bounded; see~\cite[Proposition~7.4]{GhLi}.

For the sake of completeness, we show the following Lemmata, which are analogous to similar results on single series.
\begin{lem}\label{product} Let $(a_{m,n})$ be a bounded double sequence and let $\sum\sum_{(m,n)}b_{m,n}$ be an absolutely convergent double series. 
Then $\sum\sum_{(m,n)}a_{m,n}b_{m,n}$ converges absolutely.
\end{lem}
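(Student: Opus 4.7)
The plan is to reduce absolute convergence of $\sum\sum_{(m,n)} a_{m,n}b_{m,n}$ to the absolute convergence of $\sum\sum_{(m,n)} b_{m,n}$ via a termwise majorisation, exactly as one would for ordinary (single) series. The boundedness hypothesis on $(a_{m,n})$ furnishes the needed uniform constant, and the criterion recalled from \cite[Proposition~7.14]{GhLi} (a nonnegative double series converges iff its partial sums are bounded above) does the rest.

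Concretely, I would first fix $M>0$ with $|a_{m,n}|\le M$ for all $m,n\in\N$, which exists by hypothesis. Then for each $k,l\in\N$ I would consider the partial sum
\[
S_{k,l}:=\sum_{m=1}^{k}\sum_{n=1}^{l}|a_{m,n}b_{m,n}|\le M\sum_{m=1}^{k}\sum_{n=1}^{l}|b_{m,n}|=:M\cdot T_{k,l}.
\]
By assumption $\sum\sum_{(m,n)}b_{m,n}$ is absolutely convergent, so the partial sums $T_{k,l}$ of $\sum\sum_{(m,n)}|b_{m,n}|$ form a bounded double sequence; call an upper bound $B$. Hence $S_{k,l}\le MB$ for all $k,l$, so the double sequence $(S_{k,l})$ is bounded above.

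Since $(S_{k,l})$ is the partial-sum sequence of a double series with nonnegative terms, the convergence criterion in Section~\ref{dseries} (cf.\ \cite[Proposition~7.14]{GhLi}) implies that $\sum\sum_{(m,n)}|a_{m,n}b_{m,n}|$ converges, i.e.\ $\sum\sum_{(m,n)}a_{m,n}b_{m,n}$ is absolutely convergent, as required.

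There is no real obstacle here; the only subtlety worth flagging is that one must use the nonnegative-partial-sums criterion for double series rather than reasoning by iterated single series, because a convergent double sequence need not be bounded (as noted just before the lemma statement), so one cannot simply invoke standard facts about ordered limits. Using the explicit bound $S_{k,l}\le M\,T_{k,l}$ sidesteps this issue entirely.
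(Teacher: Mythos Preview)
Your proof is correct and follows essentially the same approach as the paper's: bound $|a_{m,n}|$ uniformly by a constant $M$, observe that the partial sums of $\sum\sum|b_{m,n}|$ are bounded (being a convergent monotone double sequence), and conclude that the partial sums of $\sum\sum|a_{m,n}b_{m,n}|$ are bounded above by $M$ times that bound, hence the series converges absolutely by \cite[Proposition~7.14]{GhLi}.
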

\begin{proof}
 There exists $M>0$ such that $|a_{m,n}|<M$ for all $m,n\in\N$. Since the monotonically non-decreasing double sequence $(\sum_{k=1}^{m}\sum_{l=1}^{n}|b_{k,l}|)_{m,n}$ converges, 
 it is bounded by a positive real number $N$. Therefore, 
 \[\sum_{k=1}^{m}\sum_{l=1}^{n}|a_{k,l}b_{k,l}|<M\sum_{k=1}^{m}\sum_{l=1}^{n}|b_{k,l}|<MN. \qedhere\]
\end{proof}

\begin{lem}\label{sumprod} A double series $\sum\sum_{(m,n)}a_{m,n}$ converges absolutely if and only if the product $\prod\prod_{(m,n)}1+a_{m,n}$ converges 
absolutely. 
\end{lem}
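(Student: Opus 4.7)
The plan is to mimic the classical one-variable argument, reducing both statements to the question of whether certain monotonically non-decreasing double sequences are bounded above. By definition, absolute convergence of the double series means convergence of the nonnegative double series $\sum\sum_{(m,n)}|a_{m,n}|$, and absolute convergence of the double product means convergence of $\prod\prod_{(m,n)}(1+|a_{m,n}|)$; so throughout the argument I may replace $a_{m,n}$ by $|a_{m,n}|$ and assume $a_{m,n}\geq 0$.

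Next I would introduce the double sequences of partial sums and partial products
\[S_{M,N}=\sum_{m=1}^{M}\sum_{n=1}^{N}a_{m,n},\qquad P_{M,N}=\prod_{m=1}^{M}\prod_{n=1}^{N}(1+a_{m,n}).\]
Since $a_{m,n}\geq 0$, both $(S_{M,N})$ and $(P_{M,N})$ are monotonically non-decreasing in each of $M$ and $N$. By the monotone-convergence principle for double sequences recalled above (\cite[Proposition~7.4]{GhLi}), each of them converges if and only if it is bounded above. It therefore suffices to show that $(S_{M,N})$ is bounded above if and only if $(P_{M,N})$ is.

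For this I would invoke two elementary inequalities valid for nonnegative reals. First, from $1+x\leq e^x$ (for $x\geq 0$) one obtains $P_{M,N}\leq \exp(S_{M,N})$ by multiplying over all pairs $(m,n)$ with $1\leq m\leq M$, $1\leq n\leq N$. Second, expanding the product shows that $\prod_{i}(1+x_i)\geq 1+\sum_{i}x_i$ for nonnegative $x_i$, which applied to our finite double product gives $1+S_{M,N}\leq P_{M,N}$. Chaining these, we get
\[1+S_{M,N}\leq P_{M,N}\leq \exp(S_{M,N}),\]
so $(S_{M,N})$ is bounded above precisely when $(P_{M,N})$ is, and the lemma follows.

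There is no real obstacle here; the only mild subtlety is to make sure we are using the correct notion of convergence for a double series and a double product (in the sense of partial sums and partial products indexed by pairs $(M,N)$), and that the monotone-convergence criterion cited in Section~\ref{dseries} applies uniformly in the two indices. Both are handled cleanly by the reduction to nonnegative terms carried out in the first step.
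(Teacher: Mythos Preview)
Your proof is correct and follows essentially the same approach as the paper: reduce to nonnegative terms, use that monotone double sequences converge if and only if they are bounded, and then sandwich the partial products between $1+S_{M,N}$ and $\exp(S_{M,N})$ via the inequalities $1+x\leq e^x$ and $\prod_i(1+x_i)\geq 1+\sum_i x_i$.
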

\begin{proof}
Denote by $P_{m,n}$ the partial product $\prod_{k=1}^{m}\prod_{l=1}^{n}(1+|a_{k,l}|)$ and by $S_{m,n}$ the partial sum $\sum_{k=1}^{m}\sum_{l=1}^{n}|a_{k,l}|$.
The double sequences $(P_{m,n})$ and $(S_{m,n})$ are positive non-decreasing double sequences and hence they converge if and only if they are bounded.
One the one hand, since $1+x \leq e^x$ for all $x \in \R_{\geq 0}$, it follows that
\[P_{m,n}=\prod_{k=1}^{m}\prod_{l=1}^{n}(1+|a_{k,l}|) < \prod_{k=1}^{m}\prod_{l=1}^{n}e^{|a_{k,l}|}=e^{s_{m,n}}.\]
On the other hand, it is easy to see that $P_{m,n} \geq 1+S_{m,n}$. Therefore, $(P_{m,n})$~is bounded if and only if~$(S_{m,n})$ is bounded.
\end{proof}
\subsection{Convergence of bivariate Euler products}\label{deuler}
In this section we recall from~\cite[Theorem~2.7]{Del14} the domains of convergence and meromorphy of the Euler products on several variables.
In that article, Delabarre deals with nonuniform Euler products on~$n>1$ variables $s_1$, $\dots$, $s_n$. %We state his main result only for the case~$n=2$.
%We notice that the results here are straightforward generalisations of~\cite[Theorem~2.7]{Del14}, in case $n=2$, to products over a `large' set of prime ideals.
We observe that Delabarre's main results admit straightforward generalisations to products over prime ideals of~$\ri$, but we illustrate this just for the case~$n=2$.

For each nonzero prime ideal~$\p$ of~$\ri$, denote by~$q_{\p}$ the cardinality of the residue field~$\ri/\p$. 
We are interested in the domains of convergence and meromorphy of the Euler products 
\[Z_{c}(s_1,s_2)=\prod_{\p \in P}h(q_{\p}^{-s_1}, q_{\p}^{-s_2}, q_{\p}^{-c}),\]
where~$c$ is a fixed nonzero integer and $h(X_1,X_2,X_3)\in \Z[X_1,X_2,X_3]$ is a polynomial 
\[h(X_1,X_2,X_3)=1+\sum_{j=1}^{r}a_jX_{1}^{\alpha_{1,j}}X_{2}^{\alpha_{2,j}}X_{3}^{\alpha_{3,j}},\] 
with $a_j\neq 0$ and $\widehat{\boldsymbol{\alpha}}_j=(\alpha_{1,j},\alpha_{2,j},\alpha_{2,j})\in \Z^3\setminus\{\mathbf{0}\}$, for each $j \in [r]$, 
where for each $n\in\N$, $[n]$ denotes the set $\{1,\dots, n\}$. 
Denote $\boldsymbol{\alpha}_j=(\alpha_{1,j},\alpha_{2,j})$.

The polynomial $h(X_1,X_2,X_3)$ is called \emph{cyclotomic} if there exists a finite set $I \subset \N^{n+1} \setminus \{0\}$ such that
\[h(X_1,X_2,X_3)= \prod_{\boldsymbol{\lambda}=(\lambda_1, \lambda_2, \lambda_3) \in I}(1-X_{1}^{\lambda_1}X_{2}^{\lambda_2}X_{3}^{\lambda_3})^{\gamma(\boldsymbol{\lambda})},\]
where the $\gamma(\boldsymbol{\lambda})$ are nonzero positive integers.
If $h$ is cyclotomic, then $Z_{c}(s_1,s_2)$ can be meromorphically continued to the whole $\C^2$. For this reason, from now on, we assume that $h$ is not constant and does not 
contain cyclotomic factors.

For each $\delta\geq 0$, set
\begin{align*}
 %W(\delta)&=\{(s_1,s_2, s_3)\in \C^3 \mid \re(\alpha_{1,j}s_1+\alpha_{2,j}s_2+\alpha_{j,3}s_3)>\delta,~j\in[r]\},\\
 W_c(\delta)&=\{(s_1,s_2)\in \C^2 \mid \re(\alpha_{1,j}s_1+\alpha_{2,j}s_2)>\delta-c\alpha_{j,3},~j\in[r]\}.
\end{align*}
\begin{pps}\label{Del}\cite[Theorem~2.7]{Del14} The product $(s_1,s_2) \mapsto Z_c(s_1,s_2)$ converges absolutely in the domain $W_c(1)$ and admits meromorphic continuation 
to $W_c(0)$. 
% % % Moreover, if the polynomial $h(X_1,X_2,X_3)$ admits at least one nondegenerate face $\mathcal{F}(\widehat{\boldsymbol{\alpha}}_e)$ and satisfies:
% % % \[\text{If }(\alpha_{1,j},\alpha_{2,j}) \notin \langle (\alpha_{1,e},\alpha_{2,e})\rangle,\text{ then }(\alpha_{1,j},\alpha_{2,j},\alpha_{3,j}) 
% % % \notin \langle (\alpha_{1,e},\alpha_{2,e},\alpha_{3,e})\rangle,~\forall j\in[r],\]
% % % then $\mathcal{F}(\boldsymbol{\alpha}}_e)$ is a natural boundary in the strong sense.
\end{pps}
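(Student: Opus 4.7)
The plan is to establish absolute convergence on $W_c(1)$ by comparison with the Dedekind zeta function $\ddkzeta$ of $K$, and to obtain meromorphic continuation to $W_c(0)$ by a standard iterative peeling of $\ddkzeta$-factors.

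For convergence on $W_c(1)$, I would expand each local factor as
\[h(q_\p^{-s_1},q_\p^{-s_2},q_\p^{-c}) = 1 + \sum_{j=1}^r a_j q_\p^{-L_j(s_1,s_2)}, \quad L_j(s_1,s_2) := \alpha_{1,j} s_1 + \alpha_{2,j} s_2 + c\alpha_{3,j},\]
and invoke Lemma~\ref{sumprod} to translate absolute convergence of $Z_c(s_1,s_2)$ into absolute convergence of the associated double series $\sum_\p \sum_{j=1}^r a_j q_\p^{-L_j(s_1,s_2)}$. On $W_c(1)$ every $\re(L_j) > 1$, so each inner sum is dominated by $|a_j|\,\ddkzeta(\re(L_j))$, a finite quantity; summing the finitely many contributions yields absolute convergence of $Z_c(s_1,s_2)$ on $W_c(1)$.

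For the meromorphic continuation, I would factor out Dedekind zeta functions encoding the lowest-order behaviour of $h$. Setting $Y_j := X_1^{\alpha_{1,j}} X_2^{\alpha_{2,j}} X_3^{\alpha_{3,j}}$ and
\[\tilde h(X_1,X_2,X_3) := h(X_1,X_2,X_3) \cdot \prod_{j=1}^r (1 - Y_j)^{a_j},\]
a formal expansion in the $Y_j$'s gives $\tilde h = 1 + R$, where every monomial of $R$ has total $Y$-degree at least $2$. This yields the factorisation
\[Z_c(s_1,s_2) = \prod_{j=1}^r \ddkzeta(L_j(s_1,s_2))^{a_j} \cdot \tilde Z_c(s_1,s_2), \quad \tilde Z_c(s_1,s_2) := \prod_{\p} \tilde h(q_\p^{-s_1},q_\p^{-s_2},q_\p^{-c}),\]
in which each nontrivial exponent of $q_\p$ in $\tilde h$ becomes an $\N$-linear combination of the $L_j$ of total weight at least $2$. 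Repeating the first-step comparison with $\ddkzeta$ then shows that $\tilde Z_c$ converges absolutely on $W_c(1/2)$, strictly larger than $W_c(1)$.

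Iterating the procedure peels off further $\ddkzeta$-factors evaluated at $\Z$-linear forms in $s_1,s_2,c$ and produces, at the $n$-th step, a remainder Euler product absolutely convergent on $W_c(\delta_n)$ with $\delta_n \to 0$. Since each Dedekind zeta factor is meromorphic on all of $\C$ and $W_c(0) = \bigcup_{n \geq 1} W_c(\delta_n)$, the successive factorisations assemble into a meromorphic continuation of $Z_c(s_1,s_2)$ to $W_c(0)$. The main obstacle is the combinatorial bookkeeping: one must verify that the list of linear forms at which $\ddkzeta$ is evaluated stays finite at every stage, that each iteration strictly raises the minimal $Y$-degree of the remainder, and that the hypothesis that $h$ contains no cyclotomic factor prevents the process from stalling with a remainder that fails to extend beyond the initial half-plane.
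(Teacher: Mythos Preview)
The paper does not prove this proposition; it is quoted from \cite[Theorem~2.7]{Del14} without argument. Your sketch follows the standard route, which is in fact Delabarre's: compare with Dedekind zeta values for absolute convergence on $W_c(1)$, then iteratively factor out $\ddkzeta$-terms so that the remaining Euler product converges on successively larger domains $W_c(1/2), W_c(1/3),\dots$ exhausting $W_c(0)$. So your approach and the cited proof are essentially the same.

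One minor correction: the non-cyclotomic hypothesis is not what keeps the iteration from stalling. The peeling step always strictly raises the minimal total $Y$-degree of the remainder (the degree-$n$ terms are cancelled exactly, leaving only monomials of degree $\geq n+1$), so the process advances regardless of whether $h$ has cyclotomic factors. As the paper notes just before the proposition, the cyclotomic case is set aside only because it is trivial --- such $h$ already admit meromorphic continuation to all of~$\C^2$. Your other bookkeeping concerns (finiteness of the list of $\ddkzeta$-factors at each stage, strict enlargement of the domain) are legitimate but routine, and are precisely what Delabarre verifies.
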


Set $\hat{h}(X_1,X_2,X_3)=\sum_{j=1}^{r}a_jX_{1}^{\alpha_{1,j}}X_{2}^{\alpha_{2,j}}X_{3}^{\alpha_{3,j}}=h(X_1,X_2,X_3)-1$.
Lemma~\ref{sumprod} then yields that the sum 
 \[S_{c}(s_1,s_2)=\sum_{\p \in P}\hat{h}(q_{\p}^{-s_1},q^{-s_2},q^{-c})\]
converges absolutely in the domain $W_c(1)$. %, admits meromorphic continuation to $W_c(0)$, and has a natural boundary in the same conditions as the product $Z_c(s_1,s_2)$.

\begin{rmk}
 Let $P \subseteq \specp$ be a finite set of prime ideals of~$\ri$. Since 
\[\prod_{\p \in P}h(q_{\p}^{-s_1}, q_{\p}^{-s_2}, q_{\p}^{-c})\] 
is analytic, the infinite product
\[p(s_1,s_2):=\prod_{\p \notin P}h(q_{\p}^{-s_1}, q_{\p}^{-s_2}, q_{\p}^{-c})=\frac{Z_{c}(s_1,s_2)}{\prod_{\p \in P}h(q_{\p}^{-s_1}, q_{\p}^{-s_2}, q_{\p}^{-c})}\] 
also admits meromorphic continuation to $W_{c}(0)$. It converges on~$W_{c}(1)$ if the set of zeros~$V(p)$ of~$p(s_1,s_2)$ is not contained in this domain.
It follows that Proposition~\ref{Del} holds if we consider $Z_{c}(s_1,s_2)$ as a product over almost all nonzero prime ideals of~$\ri$, 
as long as the zeros of the corresponding $h(q_{\p}^{-s_1}, q_{\p}^{-s_2}, q_{\p}^{-c})$ do not lie in~$W_{c}(1)$. 
\end{rmk}
%%%%%%%%%%%%%%%%%%%%%%%%%%%%%%%%%%%%%%%%%%%%%%%%%%%%%%%%%%%%%%%%%%%%%%%%%%%%%%%%%%%%%%%%%%%%%%%%%%%%%%%%%%%%%%%%%%%%%%%%%%%%%%%%%%%%%%%%%%%%%%%%%%%%%%%%%%%%%%%%%%%%%%%%%%%%%%%%%
%																						%
%						Section 2 - P-adic formalism										%
%																						%
%%%%%%%%%%%%%%%%%%%%%%%%%%%%%%%%%%%%%%%%%%%%%%%%%%%%%%%%%%%%%%%%%%%%%%%%%%%%%%%%%%%%%%%%%%%%%%%%%%%%%%%%%%%%%%%%%%%%%%%%%%%%%%%%%%%%%%%%%%%%%%%%%%%%%%%%%%%%%%%%%%%%%%%%%%%%%%%%%
\section{$\p$-adic integrals}\label{spadic}
Almost all local factors of bivariate representation and conjugacy class zeta functions can be written in terms of $\p$-adic integrals 
which are special cases of the general integrals defined in \cite[Section~2]{Vo10}. 
In~\cite[Theorem~2.2]{Vo10} and~\cite[Proposition~3.3]{AKOV13}, for almost all nontrivial prime ideals $\p$, these general $\p$-adic integrals are described in terms of 
formulae of Denef type, from which the domains of convergence can be read off. By a formula of Denef type we mean a finite sum of the form 
\begin{equation}\label{deneftype}\sum_{i=1}^{n}|\overline{V_i}(\lrip)|W_i(q,q^{s_1},q^{s_2}),\end{equation} 
where  $|V_i(\lrip)|$ denotes the $\op$-rational points of reductions modulo $\p$ of a suitable algebraic variety $V_i$ defined over $\ri$ and $W_i(X,Y,Z)$ is a rational function.

In Section~\ref{pthmA}, we use these descriptions to prove Theorem~\ref{thmA}.
%We first recall the general integrals of \cite{Vo10} and their descriptions in terms of formulae of Denef type given in \cite[Theorem~2.2]{Vo10} and \cite[Proposition~3.3]{AKOV13}.
In preparation for this, we recall from~\cite[Section~4.2]{PL18} how to write the local factors of the bivariate zeta functions of Definition~\ref{biva} in 
terms of $\p$-adic integrals in Section~\ref{padic}, and recall the general integrals of~\cite[Section~2]{Vo10}, as well as~\cite[Theorem~2.2]{Vo10} 
and~\cite[Proposition~3.3]{AKOV13}, in Section~\ref{Denef}.

%From now on, $\ri$ denotes the ring of integers of the number field $K$ and $\p$ denotes a fixed prime ideal of $\ri$. We also write $\lri=\ric$ for the completion of $\ri$ at $\p$. 
\subsection{Poincaré series and $p$-adic integration}\label{padic}
Fix a prime ideal $\p$ of $\ri$ and write $\lri=\ric$. Set $\g=\Lambda(\lri)=(\Lambda \otimes_{\ri}\ri)\otimes_{\ri}\lri$. Write $\z$ for the centre of $\g$ and denote by $\g'$ 
its derived Lie sublattice.
Set also
\[h=\rk_{\lri}(\g), \hspace{0.5cm} a=\rk_{\lri}(z), \hspace{0.5cm} b=\rk_{\lri}(\g').\]
Fix an ordered set $\mathbf{e}=(e_1,\dots, e_a)$ of elements of~$\g$ such that $\overline{\textbf{e}}=(\overline{e}_1, \dots, \overline{e}_a)$ is an ordered set of $\lri$-module 
generators of~$\g/\z$, where $\overline{\phantom{a}}$ denotes the canonical map $\g \to \g/\z$. Let also $\mathbf{f}=(f_1, \dots, f_b)$ be an ordered set of $\lri$-module generators 
of~$\g'$. 
Recall the notation $[n]=\{1,\dots,n\}$ for $n\in\N$. For $i,j \in [a]$ and $l\in[b]$, consider the structure constants~$\lambda_{ij}^{l}$ with respect to 
the bases~$\mathbf{e}$ and~$\mathbf{f}$, given by 
\[[e_i,e_j]=\sum_{l=1}^{b}\lambda_{ij}^{l}f_l.\]
\begin{dfn}\label{commutator}\cite[Definition~2.1]{ObVo15} The $A$-commutator matrix and the $B$-commutator matrix of $\g$ with respect to $\mathbf{e}$ and $\mathbf{f}$ are, respectively,
\begin{align*}
 A(X_1,\dots,X_a)	&=\left( \sum_{k=1}^{a}\lambda_{ik}^{j}X_k\right)_{ij}\in \Mat_{a \times b}(\lri[\underline{X}]),\text{ and} \\
 B(Y_1, \dots, Y_b)	&=\left( \sum_{k=1}^{b}\lambda_{ij}^{k}Y_k\right)_{ij}\in \Mat_{a \times a}(\lri[\underline{Y}]),
\end{align*}
where $\underline{X}=(X_1, \dots, X_a)$ and $\underline{Y}=(Y_1, \dots, Y_b)$ are independent variables. 
\end{dfn}
% \begin{rmk}
% The construction of the bases $\mathbf{e}$ and $\mathbf{f}$ and the definitions of the commutator matrices follow \cite[Section~2.2.2]{StVo14}, where the numbers $\rk_{\lri}(\g')$ 
% and $\rk_{\lri}(\g/\z)$ are represented by the letters $d$ and $r$. We change this notation so that the $A$-commutator matrix is defined over $a$ variables, and so that the 
% $B$-matrix is defined over $b$ variables.
% \end{rmk}

Suppose that $(p,c)\neq (2,3)$. In \cite[Section~4.2]{PL18}, the bivariate conjugacy class zeta functions of groups of the form $\G(\ri)$ are described 
in terms of Poincaré series encoding the elementary divisor type of the $A$-commutator matrix for some bases $\mathbf{e}$ and $\mathbf{f}$ as above, whilst the 
bivariate representation zeta function of $\G(\ri)$ is described in terms of Poincaré series encoding the elementary divisor type of the $B$-commutator matrix.
As a consequence, the local factors of the bivariate zeta functions can be written in terms of $p$-adic integrals, which we now recall.
For a given matrix $\mathcal{R}(\underline{Y})=\mathcal{R}(Y_1, \dots, Y_n)$ of polynomials $\mathcal{R}(\underline{Y})_{ij}\in\lri[\underline{Y}]$, define
\begin{equation}\label{Vo10}\pint_{\lri,\mathcal{R}}(\underline{r},t)=\frac{1}{1-q^{-1}}\int_{(x,\underline{y}) \in \p\times \W_{n}}|x|_{\p}^{t-n-1}\prod_{k=1}^{u_{\mathcal{R}}}
\frac{\|F_{k}(\mathcal{R}(\underline{y})) \cup xF_{k-1}(\mathcal{R}(\underline{y}))\|_{\p}^{r_k}}{\|F_{k-1}(\mathcal{R}(\underline{y}))\|_{\p}^{r_{k}}}d\mu,
\end{equation}
where $\underline{r}=(r_1, \dots, r_{\epsilon})$ is a vector of variables, $\W_{n}:=\lri^n\setminus \p^n$, and $\mu$ is the additive Haar measure on $\lri^{n+1}$, 
normalised so that $\mu(\lri^{n+1})=1$. 
%Moreover, $| . |_{\p}$ and $\|.\|_{\p}$ are as in Section~\ref{notation}, 
Moreover, $u_{\mathcal{R}}=\max\{rk_{\text{Frac}(\lri)}\mathcal{R}(\textbf{z}) \mid \textbf{z}\in \lri^n\}$, and
$F_j(\mathcal{R}(\underline{y}))$ denotes the set of $(j \times j)$-minors of $\mathcal{R}(\underline{y})$.

\begin{pps}\label{intpadic}\cite[Proposition~4.8]{PL18} For $(p,c)\neq (2,3)$,
\begin{align}
\label{rpadic}\brzf_{\G(\lri)}(s_1,s_2)=
&\frac{1}{1-q^{r-s_2}}\left(1+ \pint_{\lri,B}\left(\tfrac{-s_1-2}{2},\ub s_1+s_2+2\ub-h-1\right)\right),\\ % The term (1-q^{-1})^{-1} is included in the definition of \pint 
\label{ccpadic}\bcczf_{\G(\lri)}(s_1,s_2)=
&\frac{1}{1-q^{z-s_2}}\left(1+ \pint_{\lri,A}(-s_1-1,\ua s_1+s_2+\ua-h-1)\right).
% &\label{rpadic}\brzf_{\G(\lri)}(s_1,s_2)=\frac{1}{1-q^{r-s_2}}\left(1+(1-q^{-1})^{-1}\cdot\right\\
% &\left.\int_{(w,\underline{y}) \in \p\times \W_{b}}\hspace{-0.5cm}|w|_{\p}^{\ub s_1+s_2+2\ub-h-1}\prod_{k=1}^{\ub}
% \frac{\|F_{2k}(B(\underline{y})) \cup w^2F_{2(k-1)}(B(\underline{y}))\|_{\p}^{\frac{-2-s_1}{2}}}{\|F_{2(k-1)}(B(\underline{y}))\|_{\p}^{\frac{-2-s_1}{2}}}d\mu\right), \nonumber \\
% % \end{align}
% % \begin{align}
% \label{ccpadic} &\bcczf_{\G(\lri)}(s_1,s_2)=\frac{1}{1-q^{z-s_2}}\left(1+(1-q^{-1})^{-1}\cdot\right.\\
% &\left. \int_{(w,\underline{x}) \in \p\times \W_{a}}\hspace{-0.2cm}|w|_{\p}^{\ua s_1+s_2+\ua-h-1}\prod_{k=1}^{\ua}
% \frac{\|F_{k}(A(\underline{x})) \cup wF_{k-1}(A(\underline{x}))\|_{\p}^{-1-s_1}}{\|F_{k-1}(A(\underline{x}))\|_{\p}^{-1-s_1}}d\mu\right),\nonumber
\end{align}
where $h=\rk_{\lri}(\g)$, $r=\rk_{\lri}(\g/\g')=h-b$, $z=\rk_{\lri}(\z)=h-a$, and %$F_j(\mathcal{R}(\underline{z}))$ denotes the set of $(j \times j)$-minors of $\mathcal{R}(\underline{z})\in\{A(\underline{x}),B(\underline{y})\}$. 
%Moreover,
\begin{align*}\ua&=\max\{\rk_{\text{Frac}(\lri)}A(\mathbf{z}) \mid \mathbf{z}\in \lri^a\},\\ %\text{ and}\\
 2\ub&=\max\{\rk_{\text{Frac}(\lri)}B(\mathbf{z}) \mid \mathbf{z}\in \lri^b\}.
\end{align*}
\end{pps}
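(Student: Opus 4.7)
The plan is to reduce each local factor to a $\p$-adic integral of the form~\eqref{Vo10} by invoking the Kirillov orbit method on the $\p$-adic nilpotent Lie algebra $\g$. Under the hypothesis $(p,c)\neq(2,3)$, the Campbell--Baker--Hausdorff exponential furnishes a bijection between $\g\otimes_{\lri}\lri/\p^N$ and $\G(\lri/\p^N)$ that intertwines the adjoint and coadjoint actions, so both the enumeration of irreducible characters and the enumeration of conjugacy classes of $\G(\lri/\p^N)$ become orbit-counting problems on the Lie side.

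First I would set up, via Kirillov, a dimension-preserving bijection between $\Irr(\G(\lri/\p^N))$ and the coadjoint orbits on $\Hom_{\lri}(\g,\lri/\p^N)$, together with the analogous bijection between conjugacy classes of $\G(\lri/\p^N)$ and the adjoint orbits on $\g/\p^N\g$. For a linear functional $\omega$, the associated representation has squared dimension equal to the index of the radical of the alternating form $(x,y)\mapsto\omega([x,y])$, whose Gram matrix modulo the centre is precisely $B(\omega(\mathbf{f}))$ evaluated at the coordinates of $\omega|_{\g'}$. Dually, the centraliser of $x\in\g/\p^N\g$ is governed by the cokernel of $\ad(x)\colon\g/\z\to\g'$, whose matrix with respect to $(\mathbf{e},\mathbf{f})$ is $A$ evaluated at the coordinates of $x$ modulo $\z$. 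Grouping orbits by the elementary divisor type of the relevant commutator matrix therefore reduces each local factor to a Poincaré series in $q^{-s_1}$ and $q^{-s_2}$ encoding the joint distribution of these elementary divisor types with the level $N$.

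Next I would peel off the contribution of the $1$-dimensional representations and, respectively, of the central conjugacy classes. The characters of $\G(\lri/\p^N)$ factoring through the abelianisation contribute $q^{Nr}$ to $\rzeta_{\G(\lri/\p^N)}(s_1)$, and the centre contributes $q^{Nz}$ to $\cczeta_{\G(\lri/\p^N)}(s_1)$; summing $q^{Nr-Ns_2}$ (resp.\ $q^{Nz-Ns_2}$) geometrically over $N$ produces the prefactors $\frac{1}{1-q^{r-s_2}}$ and $\frac{1}{1-q^{z-s_2}}$ appearing in~\eqref{rpadic} and~\eqref{ccpadic}. What remains is a Poincaré series over levels $N\geq 1$ and nontrivial orbit data, which I would rewrite via a standard Iwasawa-style manipulation as an integral of the form $\pint_{\lri,\mathcal{R}}(\underline{r},t)$: the variable $x\in\p$ records the level through $N=\ord_{\p}(x)$; the vector $\underline{y}\in\W_{n}$ enforces primitivity of the point at which $\mathcal{R}$ is evaluated; and the factors $\|F_{k}(\mathcal{R}(\underline{y}))\cup xF_{k-1}(\mathcal{R}(\underline{y}))\|_{\p}^{r_k}/\|F_{k-1}(\mathcal{R}(\underline{y}))\|_{\p}^{r_k}$ detect the elementary divisors of $\mathcal{R}(\underline{y})\bmod\p^N$.

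The principal obstacle will be the bookkeeping of exponents. One must verify that the specialisations $\underline{r}=\tfrac{-s_1-2}{2}$, $t=\ub s_1+s_2+2\ub-h-1$ for the $B$-integral, and $\underline{r}=-s_1-1$, $t=\ua s_1+s_2+\ua-h-1$ for the $A$-integral, line up with the weighting of the elementary divisors against the representation dimension (which is the square root of the orbit size, explaining the factor of $\tfrac{1}{2}$ in the $B$-case) and against the centraliser index, after absorbing the Jacobian-like factor $|x|_{\p}^{t-n-1}$, the normalisation by $1-q^{-1}$, and the ranks $h$, $a=h-z$, $b=h-r$. The hypothesis $(p,c)\neq(2,3)$ is exactly what guarantees that Kirillov's correspondence and the exponential map can be transported through all congruence levels without characteristic obstructions.
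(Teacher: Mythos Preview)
The paper does not prove this proposition at all: it is quoted verbatim from \cite[Proposition~4.8]{PL18} and used as a black box, so there is no in-paper proof to compare against. Your outline is a faithful sketch of the argument in the cited reference---Kirillov orbit method to convert both counting problems to (co)adjoint orbit counts governed by the commutator matrices $A$ and $B$, then extraction of the geometric-series prefactor from the trivial orbits, and finally the rewriting of the resulting Poincar\'e series as the $\p$-adic integral $\pint_{\lri,\mathcal{R}}$---and nothing in it is wrong in spirit. The one place to be careful, which you flag yourself, is the exponent bookkeeping: the shift from orbit size to representation dimension introduces the factor $\tfrac{1}{2}$ in the $B$-case, and the passage from the Poincar\'e series to the integral absorbs the Haar-measure normalisation and the rank constants into the $t$-variable in a way that is easy to get off by one; but this is arithmetic, not a conceptual gap.
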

% Given a bounded sequence $(a_n)_{n}$ and an absolutely convergent series $\sum_nb_n$, the series $\sum_{n}a_nb_n$ converges absolutely. 
% % In fact, there exists $A>0$ such that $|a_n|<A$, for each $n \in \N$. Since $\sum_{n\in\N}b_n$ converges absolutely, the sequence of partial sums $(\sum_{n=1}^{N}b_n)_{N}$ is a 
% % non-decreasing convergent sequence, thus it is bounded by a real number $B>0$. It follows that
% % \[|\sum_{n=1}^{N}|a_nb_n|<A\sum_{n=1}^{N}|b_n|<AB.\]
% % Thus, the non-decreasing sequence of partial sums $\left(\sum_{n=1}^{N}|a_nb_n|\right)$ is bounded and hence converges.
%%%%%%%%%%%%%%%%%%%%%%%%%%%%%
% Se conseguit mostrar o esquema para subgrupos:
% 
% For $(p,c)=(2,3)$, we consider, instead of $\g$, a suitable congruence sublattice of $\g$ which gives rise to a pro-$2$ group $\G^e(\lri)$ for which there exists a Kirillov 
% orbit method; as explained in~\cite[Section~2.1]{DuVo14}. We then use the methods of~\cite[Section~2]{PL18} to write $\bzfs_{\G^e(\lri)}(s_1,s_2)$ as $\p$-adic integrals as the ones 
% of Proposition~\ref{intpadic}.
%%%%%%%%%%%%%%%%%%%%%%%%%%%%%

When determining the domains of convergence and meromorphic continuation of the bivariate zeta functions, it suffices to determine the respective domains of convergence of 
the following functions:
\begin{align}
&\label{rpadic2}\rrel=1+\pint_{\lri,B}\left(\tfrac{-s_1-2}{2},\ub s_1+s_2+2\ub-h-1\right),\\ % The term (1-q^{-1})^{-1} is included in the definition of \pint 
&\label{ccpadic2}\crel=1+ \pint_{\lri,A}(-1-s_1,\ua s_1+s_2+\ua-h-1).
\end{align}
In fact, the products $\prodq(1-q^{r-s_1})^{-1}$ and $\prodq(1-q^{z-s_1})^{-1}$ converge, respectively, for $\re(s_2)>1-r$ and $\re(s_2)>1-z$, and both admit meromorphic continuation to 
the whole~$\C^2$.
% In fact, the products $\prodq(1-q^{r-s_1})^{-1}$ and $\prodq(1-q^{z-s_1})^{-1}$ converge, respectively on $\D_{r}^{T}:=\{\setopt \mid \re(s_2)>1-r\}$ and 
% $\D_{z}^{T}:=\{\setopt \mid\re(s_2)>1-z\}$, and admit meromorphic continuation to the whole $\C^2$. 
% Therefore, if $\glrrel$ and $\glcrel$ converge, respectively, on $\D_{\G(\ri)}^{\irrup}$ and $\D_{\G(\ri)}^{\ccup}$, and admit meromorphic continuation, respectively, to 
% $\mathscr{M}_{\G(\ri)}^{\irrup}$ and $\mathscr{M}_{\G(\ri)}^{\ccup}$, then $\brzf_{\G(\ri)}(s_1,s_2)$ and $\bcczf_{\G(\ri)}(s_1,s_2)$ converge, respectively, on 
% $\D_{\G(\ri)}^{\irrup}\cap\D_{r}^{T}$ and $\D_{\G(\ri)}^{\ccup}\cap\D_{z}^{T}$, and admit meromorphic continuation, respectively, to $\mathscr{M}_{\G(\ri)}^{\irrup}$ and $\mathscr{M}_{\G(\ri)}^{\ccup}$.
% because, when $q$ increases, the positive sequences $(1-q^{r-s_1})^{-1}$ and $(1-q^{z-s_1})^{-1}$ are nondecreasing for $\re(s_2)>1-r$ and $\re(s_2)>1-z$, respectively. 
We call the functions~\eqref{rpadic2} and~\eqref{ccpadic2} the \emph{main terms} of the bivariate representation, respectively, of the bivariate conjugacy class zeta functions 
of~$\G(\lri)$.
%%%%%%%%%%%%%%%%%%%%%%%%%%%%%%%%%%%%%%%%%%%%%%%%%%%%%%%%%%%%%%%%%%%%%%%%%%%%%%%%%%%%%%%%%%%%%%%%%%%%%%%%%%%%%%%%%%%%%%%%%%%%%%%%%%%%%%%%%%%%%%%%%%%%%%%%%%%%%%%%%%%%%%%%%%%%%%%%%
%						Section 3.2 - Denef type													%
%%%%%%%%%%%%%%%%%%%%%%%%%%%%%%%%%%%%%%%%%%%%%%%%%%%%%%%%%%%%%%%%%%%%%%%%%%%%%%%%%%%%%%%%%%%%%%%%%%%%%%%%%%%%%%%%%%%%%%%%%%%%%%%%%%%%%%%%%%%%%%%%%%%%%%%%%%%%%%%%%%%%%%%%%%%%%%%%%
\subsection{Formulae of Denef type}\label{Denef}
Firstly, we recall some notation from~\cite[Section~2]{Vo10}. 
Fix $d,l \in \N$ and set 
\begin{enumerate}
 \item $J_{\kappa}$ a finite index set, for each $\kappa \in [l]$,
 \item $e_{\kappa\iota}\in \Z_{\geq0}$, for each $\kappa \in [l]$ and $\iota \in J_{\kappa}$,
 \item $F_{\kappa\iota}(\underline{Y})=F_{\kappa\iota}(Y_1, \dots, Y_m)\in \lri[\underline{Y}]$, for all $\kappa \in [l]$ and $\iota \in J_k$.
\end{enumerate}
Let also $\mathcal{W}(\lri)\subseteq \lri^d$ be a union of cosets modulo $\p^{(d)}$. Define
\begin{equation}\label{Vogen}
 Z(\underline{s})=\int_{\p\times \Gl_d(\lri)} \prod_{\kappa=1}^{l}\left|\left|\bigcup_{\iota \in J_{\kappa}} 
 X^{e_{\kappa\iota}}F_{\kappa\iota}(\underline{Y})\right|\right|_{\p}^{s_{\kappa}}d\mu,
\end{equation}
where $\underline{s}=(s_1, \dots, s_l)$ is a vector of complex variables and $X$ and $\underline{Y}=(Y_1, \dots, Y_d)$ are independent integration variables. %Recall that $\rk{\ri}(\g')$
The numbers $d$, $l$, as well as the data $J_{\kappa}$, $e_{\kappa\iota}$, and $F_{\kappa\iota}(\underline{Y})=F_{\kappa\iota}(Y_1, \dots, Y_d)\in \lri[\underline{Y}]$ 
will be refered to as \emph{the data associated to the integral} $Z(\underline{s})$. 
The term $d\mu$ denotes the additive Haar measure on $\lri^{d+1}$, normalised so that 
\[d\mu(\p\times\Gl_d(\lri))=q^{-1}\prod_{\theta=1}^{d-1}(1-q^{-\theta}).\]

Assume further that the ideals $(F_{\kappa\iota})$ are invariant under the natural action of the standard Borel subgroup $B \subseteq \Gl_d$.

Although the integral (\ref{Vogen}) is a $\p$-adic integral, that is, a ``local'' object, its integrand is defined globally, in contrast with the integrals of 
expressions~\eqref{rpadic} and~\eqref{ccpadic}, whose integrands are defined in terms of the commutator matrices $B(\underline{Y})$ and $A(\underline{X})$, respectively.
These matrices are defined with respect to the bases~$\mathbf{e}$ and~$\mathbf{f}$, which are defined only locally. 
A global source of local bases~$\mathbf{e}$ and~$\mathbf{f}$---such as the ones of \cite[Section~3.3]{DuVo14}---is: we choose an $\ri$-basis~$\mathbf{f}$ for a free 
$\ri$-submodule of finite index of the isolator~$i(\Lambda')$ of the derived $\ri$-Lie sublattice of~$\Lambda$. 
By \cite[Lemma~2.5]{StVo14}, $\mathbf{f}$~can be extended to an $\ri$-basis~$\mathbf{e}$ for a free finite-index $\ri$-submodule of~$\Lambda$, which we denote by~$M$. 
If the residue characteristic~$p$ of~$\p$ does not divide $|\Lambda:M|$ or $|i(\Lambda'): \Lambda'|$, this basis~$\mathbf{e}$ may be 
used to obtain an $\lri$-basis for~$\Lambda(\lri)$, by tensoring the elements of~$\mathbf{e}$ with~$\lri$. 

In \cite[Section~4.4]{PL18}, it is shown that, for each $\ast \in \{\irrup,\ccup\}$, there are vectors $\asta_1$, $\asta_2$, $\astb$, and certain choices of data associated to 
$Z(\underline{s})$ such that
\begin{align}\rrel%&=1+\pint_{\lri,B}\left(\tfrac{-s_1-2}{2},\ub s_1+s_2+2\ub-h-1\right)\right) \nonumber\\
\label{rpintz}&=\frac{1}{\prod_{\theta=1}^{d-1}(1-q^{-\theta})}Z(\mathbf{a}_{1}^{\irrup}s_1+\mathbf{a}_{2}^{\irrup}s_2+\mathbf{b}^{\irrup}),\\
\crel%&=1+\pint_{\lri,A}(-1-s_1,\ua s_1+s_2+\ua-h-1)\right)\nonumber \\
\label{ccpintz}&=\frac{1}{\prod_{\theta=1}^{d-1}(1-q^{-\theta})}Z(\mathbf{a}_{1}^{\ccup}s_1+\mathbf{a}_{2}^{\ccup}s_2+\mathbf{b}^{\ccup}).
\end{align}
Consequently, given a nonzero prime ideal~$\p$ with residue field of characteristic $p \nmid |\Lambda: M||i(\Lambda'):\Lambda'|$,
for any extension $\Lri/\lri$ with degree of inertia $f=f(\Lri/\lri)$, the main term $\arel$ is given by 
\begin{equation}\label{mainterms}
 \widetilde{\bzfs_{\G(\Lri)}}(s_1,s_2)=\frac{1}{\prod_{\theta=1}^{d-1}(1-q^{-f\theta})}Z(\asta_{1}s_1+\asta_{2}s_2+\astb).
\end{equation}
% The data associated to $Z(\underline{s})$ which leads to the integral describing the bivariate zeta function $\arel$ is denoted $d^{\ast}$, $l^{\ast}$, $J_{k}^{\ast}$, 
% $e_{\kappa\iota}^{\ast}$, and $F_{\kappa\iota}^{\ast}(\underline{Y})$. We recall from~\cite[Section~4.4]{PL18} that $l^{\ast}=2u_{\mathcal{R}^{\ast}}+1$.
 
We now want to write the main terms of the bivariate representation and the bivariate conjugacy class zeta functions in terms of formulae of Denef type such as~\eqref{deneftype}.
%We now want to write the main terms of the bivariate zeta functions in terms of formulae of Denef type such as~\eqref{deneftype}.
To state these results, we introduce some notation. 

Consider the $\ri$-ideal
\[\mathcal{I}=\prod_{\kappa=1}^{l}\prod_{\iota \in J_{\kappa}}(F_{\kappa\iota}),\] where $F_{\kappa\iota}(\underline{Y})=F_{\kappa\iota}(Y_1, \dots, Y_d)\in \lri[\underline{Y}]$ 
are the functions appearing in the integrand of~\eqref{Vogen}. %Recall the assumption that the ideals $(F_{\kappa\iota})$ are invariant 
%under the natural action of the standard Borel subgroup $B \subseteq \Gl_d$. 
Fix a principalization~$(Y,h)$ of~$\mathcal{I}$ such that $h: Y \to \Gl_d/B$.

Let $T$ denote a finite set indexing the irreducible components $E_u$ of the pre-image of $h$ of the subvariety of $\Gl_d/B$ defined by $\mathcal{I}$. %Set $|T|=t$.
The numerical data associated to $(Y,h)$ is $(N_{u\kappa\iota},\nu_u)_{u\kappa\iota}$, where $N_{u\kappa\iota}$ denotes the multiplicity of the irreducible component $E_u$ in 
the pre-image under~$h$ of the variety defined by the ideal~$(F_{\kappa\iota})$ and~$\nu_u-1$ denotes the multiplicity of~$E_u$ in the divisor $h^{\ast}(d\mu(\mathbf{y}))$.

% We now rewrite the mains terms of the bivariate zeta functions in terms of formulae of Denef type such as~\eqref{deneftype}. 
When rewriting the main terms of the bivariate zeta functions in terms of Denef formulae~\eqref{deneftype}, the rational functions $W_i(X,Y,Z)$ are the functions 
$\Xi_{U,(d_{\kappa\iota})}^{N}$ defined below in terms of the numerical data $(N_{u\kappa\iota},\nu_u)_{u\kappa\iota}$ associated to~$(Y,h)$.

\begin{dfn}\label{Xi}
Let $N\in \N$, $U\subseteq T$, $(d_{\kappa\iota})\in \N_{0}^{\prod_{\kappa=1}^{l}J_{\kappa}}$, and write $\m=((m_u)_{u \in U},m_{t+1}) \in \N^{|U|}\times\N$. Define 
\[\Xi_{U,(d_{\kappa\iota})}^{N}(q,\underline{s})=(1-q^{-1})^{d+1}q^{-N{d \choose 2}}\sum_{\m \in \N_{\geq N}^{|U|}\times\N}q^{\lfct(\m)-
\sum_{\kappa=1}^{l}s_{\kappa}\min\{\lfct_{\kappa\iota}(\m)-d_{\kappa\iota}\mid \iota \in J_{\kappa}\}},\]
where 
\begin{align*}
%  \lfct(\mathbf{m})&=\lfct((m_u)_{u \in U},m_{t+1})&=	m_{t+1}+\sum_{u \in U}\nu_um_u,\\
%  \lfct_{\kappa\iota}(\mathbf{m})&=\lfct_{\kappa\iota}((m_u)_{u \in U},m_{t+1})&=	
%   e_{\kappa\iota}m_{t+1}+\sum_{u \in U}N_{u\kappa\iota}m_u, \text{ for }\kappa \in [l], \iota\in J_{\kappa}.
\lfct(\mathbf{m})&= m_{t+1}+\sum_{u \in U}\nu_um_u,\\
 \lfct_{\kappa\iota}(\mathbf{m})&= e_{\kappa\iota}m_{t+1}+\sum_{u \in U}N_{u\kappa\iota}m_u, \text{ for }\kappa \in [l], \iota\in J_{\kappa}.
\end{align*}
For the special case $N=1$ and $(d_{\kappa\iota})=(0)$, denote $\Xi_{U}(q,\underline{s}):=\Xi_{U,(0)}^{1}(q,\underline{s})$. 
\end{dfn}

%In the following, for each $U \subseteq T$, $c_U(\lri/\p)$ denotes the number of $\lri/\p$-rational points of 
%$\overline{E}_U\backslash \cup_{V \supsetneq U}\overline{E}_V$, where $E_{U}:=\bigcap_{u \in U}E_u$ and $\overline{\phantom{x}}$ denotes reduction modulo $\p$.
%In the following, $c_{U}(\op)$ and $c_{U,j,(d_{\kappa\iota})}(\lri/\p)$ denote the numbers of $\lri/\p$-rational points of certain varieties over $\lri/\p$.
\begin{pps}\label{ppsDenef}
\begin{enumerate}
  \item (1)~\cite[Theorem~2.2]{Vo10} If~$(Y,h)$ has good reduction modulo~$\p$, then 
\[Z(\underline{s})=\frac{(1-q^{-1})^{d+1}}{q^{d \choose 2}}\sum_{U\subseteq T}c_U(\lri/\p)(q-1)^{|U|}\Xi_U(q,\underline{s}).\]\vspace{0.2cm}
%where $c_{U}(\op)$ denotes the number of $\lri/\p$-rational points of a certain variety over $\lri/\p$.
  \item (2)~\cite[Proposition~3.3]{DuVo14} If~$(Y,h)$ has bad reduction modulo~$\p$, there exist $N \in \N$, finite sets $J \subset \N_0$, and 
  $\Delta \subset \N_{0}^{\prod_{\kappa=1}^{l}J_{\kappa}}$ such that 
\[
Z(\underline{s})= \frac{(1-q^{-1})^{d+1}}{q^{N{d \choose 2}}}\sum_{\substack{U \subseteq T, j \in J\\ (d_{\kappa\iota})\in\Delta}}
c_{U,j,(d_{\kappa\iota})}(\lri/\p)(q^N-q^{N-1})^{|U|}q^{-j}\Xi_{U,(d_{\kappa\iota})}^{N}(q,\underline{s}).
\]
%where $c_{U,\iota,(d_{\kappa\iota})}(\lri/\p)$ denotes the number of $\lri/\p$-rational points of a certain variety over $\lri/\p$.
Here, $c_{U}(\op)$ and $c_{U,j,(d_{\kappa\iota})}(\lri/\p)$ denote the numbers of $\lri/\p$-rational points of certain varieties over $\lri/\p$.
\end{enumerate}
\end{pps}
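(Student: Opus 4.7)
My overall strategy is the standard Denef-style approach: use a principalisation $(Y,h)$ of the ideal $\mathcal{I}$ to pull back the integrand into a normal-crossings form, then compute the pulled-back $\p$-adic integral by a geometric-series calculation performed stratum by stratum on the special fibre of $Y$. The factor $\prod_{\theta=1}^{d-1}(1-q^{-\theta})$ in the normalisation of $d\mu$ tells me that I should first trivialise the $\Gl_d(\lri)$ factor. Concretely, the $B$-invariance of each ideal $(F_{\kappa\iota})$ allows me to push the integral down the projection $\Gl_d(\lri)\to(\Gl_d/B)(\lri)$; on the base, after pulling back under $h$, each $F_{\kappa\iota}$ becomes a monomial in the local equations of the exceptional divisors $E_u$, with exponents $N_{u\kappa\iota}$, while $h^\ast(d\mu)$ picks up exponents $\nu_u-1$. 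The additional variable $X\in\p$ contributes the power $e_{\kappa\iota}$ inside the norm, and the Haar measure on $\p$ provides the independent summation variable $m_{t+1}$.

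For part~(1) (good reduction), I would work on the smooth model $Y_{\lri}$ and stratify $Y(\op)$ by the subset $U\subseteq T$ of components of the exceptional divisor hit by a given $\op$-point; by good reduction this stratification is smooth with strata of codimension~$|U|$. On each such stratum I choose local analytic coordinates in which the $E_u$ are coordinate hyperplanes, so the norm of the pulled-back union $\bigcup_\iota X^{e_{\kappa\iota}}F_{\kappa\iota}$ becomes $q^{-\min\{e_{\kappa\iota}m_{t+1}+\sum_{u\in U}N_{u\kappa\iota}m_u\}}$, where the $m_u\ge 1$ are the valuations of the coordinates transverse to $E_u$ and $m_{t+1}=v_\p(X)\ge 1$. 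Integrating out the transversal coordinates produces the weight $q^{-\sum_u\nu_u m_u}$, each stratum is fibered over its residue points with Haar-measure contribution $(1-q^{-1})^{|U|+1}/q^{\binom{d}{2}}$ after accounting for the factor $\prod_\theta(1-q^{-\theta})$ coming from $\Gl_d/B$, and summing the geometric series in $(m_u)_{u\in U},m_{t+1}\ge 1$ reconstitutes exactly $\Xi_U(q,\underline{s})$. Weighting each stratum by the number $c_U(\op)$ of its $\op$-points yields the stated formula.

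For part~(2) (bad reduction) the same blueprint works, but now $Y_{\op}$ is no longer smooth with transverse exceptional divisors, so I refine the stratification: I replace $Y(\op)$ by its pullback along $N$-th multiplication in the parameters (which accounts for the factors $q^{N\binom{d}{2}}$ and the constraint $m_u\ge N$), and I further stratify by the values $d_{\kappa\iota}$ of the truncated valuations of the $F_{\kappa\iota}$ relative to the chosen model, and by a finite index $j$ recording the mismatch between $h^\ast(d\mu)$ and a smooth local reference measure. Standard smoothness in an $N$-th infinitesimal neighbourhood then restores a geometric-series computation, giving the sum over $\N_{\ge N}^{|U|}\times\N$ and the shift $d_{\kappa\iota}$ inside the $\min$, with $c_{U,j,(d_{\kappa\iota})}(\op)$ counting the refined strata.

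The genuinely hard part is the bad-reduction case: constructing the refined stratification so that the integrand is monomial on each piece, and checking that the auxiliary data $(N,J,\Delta,c_{U,j,(d_{\kappa\iota})})$ can be chosen uniformly, so that only finitely many terms appear. This is precisely the content of \cite[Proposition~3.3]{AKOV13}, and it is where one invokes a sufficiently fine modification of $(Y,h)$ over the residue field together with a careful bookkeeping of the multiplicities. Once this is in place, the geometric-series evaluation is routine and produces $\Xi_{U,(d_{\kappa\iota})}^N(q,\underline{s})$ by construction.
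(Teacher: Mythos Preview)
The paper does not prove this proposition at all: both parts are stated as direct citations of external results (\cite[Theorem~2.2]{Vo10} for good reduction and \cite[Proposition~3.3]{DuVo14} for bad reduction), and no argument is given in the text. So there is no ``paper's own proof'' to compare against; the proposition functions purely as a black box imported from the literature.

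Your sketch is a faithful outline of the standard Denef-style argument that underlies those cited results: descend from $\Gl_d(\lri)$ to $(\Gl_d/B)(\lri)$ via the $B$-invariance of the ideals $(F_{\kappa\iota})$, pull back along the principalisation $h$ so that each $F_{\kappa\iota}$ becomes a monomial in local equations for the $E_u$ with multiplicities $N_{u\kappa\iota}$ and Jacobian exponent $\nu_u-1$, stratify the special fibre by the set $U$ of components met, and evaluate the resulting geometric series in the transversal valuations $(m_u)_{u\in U}$ together with $m_{t+1}=v_\p(X)$. This is exactly the mechanism behind \cite[Theorem~2.2]{Vo10}, and your identification of the bad-reduction refinement (passing to an $N$-th infinitesimal neighbourhood, stratifying further by the truncated data $(d_{\kappa\iota})$ and the measure defect $j$) matches the strategy of \cite[Proposition~3.3]{AKOV13}/\cite[Proposition~3.3]{DuVo14}. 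In short, your proposal is not an alternative route but a summary of what the cited sources do; for the purposes of this paper, simply invoking those references is the intended ``proof''.
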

% Proposition~\ref{ppsDenef} together with~\eqref{mainterms} yield the following.
% \begin{cor}\label{corDenef}
% If $(Y,h)$ has good reduction modulo $\p$, then 
% \[\arel=1+\frac{(1-q^{-1})^{d}q^{-{d \choose 2}}}{\prod_{\theta=1}^{d-1}(1-q^{-\theta})}\sum_{U\subseteq T}c_U(\lri/\p)(q-1)^{|U|}\Xi_U(q,\asta_{1}s_1+\asta_{2}s_2+\astb).
% \]
% %where $c_{U}(\op)$ denotes the number of $\lri/\p$-rational points of a certain variety over $\lri/\p$.
% 
% If $(Y,h)$ has bad reduction modulo $\p$, there exist $N \in \N$, finite sets $J \subset \N_0$, and 
%   $\Delta \subset \N_{0}^{\prod_{k=1}^{l}J_k}$ such that 
% \begin{align*}\arel&=1+\frac{(1-q^{-1})^{d}q^{-N{d \choose 2}}}{\prod_{\theta=1}^{d-1}(1-q^{-\theta})}
% \sum_{\substack{U \subseteq T, j \in J\\ (d_{\kappa\iota})\in\Delta}}\left(c_{U,j,(d_{\kappa\iota})}(\lri/\p)(q^N-q^{N-1})^{|U|}q^{-j}\right.\\
% &\left.\Xi_{U,(d_{\kappa\iota})}^{N}(q,\asta_{1}s_1+\asta_{2}s_2+\astb)\right). 
% \end{align*}
% %where $c_{U,j,(d_{\kappa\iota})}(\lri/\p)$ denotes the number of $\lri/\p$-rational points of a certain variety over $\lri/\p$.
% \end{cor}

%%%%%%%%%%%%%%%%%%%%%%%%%%%%%%%%%%%%%%%%%%%%%%%%%%%%%%%%%%%%%%%%%%%%%%%%%%%%%%%%%%%%%%%%%%%%%%%%%%%%%%%%%%%%%%%%%%%%%%%%%%%%%%%%%%%%%%%%%%%%%%%%%%%%%%%%%%%%%%%%%%%%%%%%%%%%%%%%%
%																						%
%						Section 3 - Convergence														%
%																						%
%%%%%%%%%%%%%%%%%%%%%%%%%%%%%%%%%%%%%%%%%%%%%%%%%%%%%%%%%%%%%%%%%%%%%%%%%%%%%%%%%%%%%%%%%%%%%%%%%%%%%%%%%%%%%%%%%%%%%%%%%%%%%%%%%%%%%%%%%%%%%%%%%%%%%%%%%%%%%%%%%%%%%%%%%%%%%%%%%
\section{Convergence and meromorphic continuation}\label{pthmA}
Throughout Section~\ref{pthmA}, we use the notation introduced in Section~\ref{Denef}. 
%In this section, we prove Theorem~\ref{thmA}; Section~\ref{holomorphic} concerns Theorem~\ref{thmA}\eqref{parti}, whilst Section~\ref{meromorphic} concerns 
%Theorem~\ref{thmA}\eqref{partii}.
\subsection{Convergence -- proof of Theorem~\ref{thmA}\eqref{parti}}\label{holomorphic}
The principalization $(Y,h)$ %of Section~\ref{Denef} 
has good reduction modulo $\p$ for all but finitely many nonzero prime ideals~$\p$ of~$\ri$. 
We denote by~$Q_1$ the set of all nonzero prime ideals~$\p$ such that $(Y,h)$ has bad reduction modulo~$\p$, and by~$Q_2$ the set of all nonzero prime ideals~$\p$ 
of~$\ri$ with residue field of characteristic~$p$ satisfying:\vspace{-0.1cm}
\begin{enumerate}
% \item\label{q1} $(Y,h)$ has bad reduction modulo $\p$,
 \item\label{q2} $p$ divides $|\Lambda:M||\iota(\Lambda'):\Lambda'|$,\text{ or }
 \item\label{q3} $(p,c)=(2,3)$.
\end{enumerate}
Denote by~$Q$ the finite set $Q_1\cup Q_2$ of `bad primes'.
We divide the proof of Theorem~\ref{thmA}\eqref{parti} into the cases $\p \notin Q$ and $\p \in Q_1$. 
The primes belonging to $Q_2$ are the primes excluded in this theorem.
\subsubsection{Good reduction}\label{goodreduction}
We now determine the maximal domain of convergence $\hol$ of
\[\astgc=\prodq \arel.\]

In \cite[Section~3.1]{DuVo14}, the authors rewrite the functions $\Xi_U(q,\underline{s})$---given in~\eqref{Xi}---in terms of zeta functions of polyhedral cones in a fan, 
allowing them to deduce a formula for the integral $Z(\underline{s})$, from which one can read off the domain of convergence. 
In the following, we apply this formula to the integral $Z(\asta_{1}s_1+\asta_{2}s_2+\astb)$ appearing in~\eqref{mainterms} to determine the domain of convergence of $\astgc$. 
%Proposition~\ref{intpadic} then assures that the local factors of the bivariate zeta functions at $\p \notin Q$ can be written in terms of such formulae. 
We recall from \cite[Section~3.1]{DuVo14} the notation needed.

Let~$t$ be the cardinality of the set~$T$ defined in Section~\ref{Denef}. Let $\{R_i\}_{i \in [w]_0}$ be a finite triangulation of $\R_{\geq 0}^{t+1}$ consisting of 
pairwise disjoint cones~$R_i$ such that each of them is a relatively open simple rational polyhedral cone with the property of eliminating the ``min-terms'' in the exponent 
of~$q$ in $\Xi_U(q,\underline{s})$. Assume that $R_0=\{0\}$ and that $R_1, \dots, R_{z}$ are the one-dimensional cones (also called \emph{rays}) in this triangulation. 
Denote by $\mathbf{r}_j \in \N_{0}^{t+1}$ the shortest integral vector on the cone~$R_j$, for each $j \in [z]$. Then $R_j=\R_{>0}\mathbf{r}_j$.

All cones $R_i$ are generated by rays, so that for each $i \in [w]$ there exists a set $M_i\subseteq [z]$ such that $R_i$ is the direct sum of monoids
\[R_i=\bigoplus_{j \in M_i}\R_{>0}\mathbf{r}_j.\]
Since $R_i=\R_{>0}\mathbf{r}_i$ exactly when $i \in [z]$, it follows that $|M_i|=1$ if and only if $i \in [z]$. Because the $R_j$ are simple, 
\[R_i\cap\N_{0}^{t+1}=\bigoplus_{j \in M_i}\N\mathbf{r}_j.\]

For $U \subseteq T$, the domain of summation of $\Xi_U(q,\underline{s})$ is
\[\mathscr{C}_U=\{\underline{m}\in \N_{0}^{t}\times\N \mid m_u =0 \text{ if and only if }u\in T\backslash U\}.\]
Denote by $W'_{U}$ the (unique) subset of $[w]$ such that 
\[\mathscr{C}_{U}=\bigcupdot_{i \in W'_{U}}R_i\cap\N_{0}^{t+1},\] 
and by $W'$ the union of all $W'_{U}$, that is, $W' \subseteq [w]$ is the set of index of cones which do not lie in the boundary component $\R_{\geq 0}\times \{0\}$ of 
$\R_{\geq 0}^{t+1}$.

Given $i\in W'$, denote by $U_i$ the unique subset $U \subseteq T$ such that $i \in W'_{U}$, and $c_i(\lri/\p):=c_{U_i}(\lri/\p)$. 
\begin{pps}\label{goodred}\cite[Proposition~3.2]{DuVo14} For $\p \notin Q$, there exist $\mathscr{A}_{j\kappa}\in \N_0$ and $\mathscr{B}_{j} \in \N$ for each 
$j \in [z]$ and $\kappa \in [l]$
such that 
\[Z(\underline{s})=\frac{(1-q^{-1})^{d+1}}{q^{d \choose 2}}\sum_{i \in W'}c_i(\lri/\p)(q-1)^{|U_i|}\prod_{j \in M_i}\frac{q^{-(\sum_{\kappa=1}^{l}
\mathscr{A}_{j\kappa}s_k+\mathscr{B}_j)}}{1-q^{-(\sum_{\kappa=1}^{l}\mathscr{A}_{j\kappa}s_\kappa+\mathscr{B}_j)}}.\]	
\end{pps}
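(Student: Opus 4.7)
The plan is to start from Voll's Denef-type formula in Proposition~\ref{ppsDenef}(1), which already expresses $Z(\underline{s})$ as a sum over $U \subseteq T$ of the functions $\Xi_U(q,\underline{s})$. The remaining task is purely combinatorial: one must evaluate each $\Xi_U$ as a finite sum of products of geometric series, using the triangulation $\{R_i\}_{i \in [w]_0}$. This is exactly the setup already prepared in the paragraphs preceding the statement, where $\{R_i\}$ is chosen so that, on every cone, the expression $\min\{\lfct_{\kappa\iota}(\m) \mid \iota \in J_\kappa\}$ becomes a single linear function of $\m$.

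First I would fix $U \subseteq T$ and use the decomposition
\[
\mathscr{C}_U \;=\; \bigcupdot_{i \in W'_{U}} R_i \cap \N_0^{t+1} \;=\; \bigcupdot_{i \in W'_{U}} \bigoplus_{j \in M_i} \N\mathbf{r}_j
\]
to split the summation in $\Xi_U(q,\underline{s})$ into a sum over $i \in W'_U$. On the cone $R_i$ the eliminating property of the triangulation guarantees that for each $\kappa \in [l]$ there is a fixed $\iota(\kappa,i) \in J_\kappa$ with $\min\{\lfct_{\kappa\iota}(\m) \mid \iota \in J_\kappa\} = \lfct_{\kappa,\iota(\kappa,i)}(\m)$ for all $\m \in R_i \cap \N_0^{t+1}$. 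Since $\lfct$ and the $\lfct_{\kappa\iota}$ are linear in $\m$, the exponent of $q$ in the summand becomes $\Z$-linear in the coordinates $(n_j)_{j \in M_i}$ obtained from writing $\m = \sum_{j \in M_i} n_j \mathbf{r}_j$ with $n_j \in \N$.

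Next I would set
\[
\mathscr{B}_j \;=\; \lfct(\mathbf{r}_j) \;=\; r_{j,t+1} + \sum_{u \in U} \nu_u r_{j,u}, \qquad \mathscr{A}_{j\kappa} \;=\; \lfct_{\kappa,\iota(\kappa,i)}(\mathbf{r}_j),
\]
where $\mathbf{r}_j = (r_{j,1},\dots,r_{j,t+1})$. Because $\mathbf{r}_j$ is the shortest integral vector on the ray $R_j$ and lies in the closure of $R_i$ where the minimum is stable, the value $\mathscr{A}_{j\kappa}$ depends only on $j$ and not on the particular cone $R_i$ containing $R_j$ as a face. The sum over the monoid $\bigoplus_{j \in M_i} \N\mathbf{r}_j$ then factors as
\[
\sum_{(n_j) \in \N^{|M_i|}} \prod_{j \in M_i} q^{-(\sum_\kappa \mathscr{A}_{j\kappa} s_\kappa + \mathscr{B}_j)\, n_j} \;=\; \prod_{j \in M_i} \frac{q^{-(\sum_\kappa \mathscr{A}_{j\kappa} s_\kappa + \mathscr{B}_j)}}{1 - q^{-(\sum_\kappa \mathscr{A}_{j\kappa} s_\kappa + \mathscr{B}_j)}}.
\]
Finally I would run over all $U \subseteq T$ and recognise that $\bigsqcup_U W'_U = W'$ and that for $i \in W'_U$ one has $U = U_i$ and $c_U(\lri/\p) = c_i(\lri/\p)$, which collapses the double sum to a single sum over $i \in W'$ of the required form.

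The main obstacle is the well-definedness of the data: one needs the ray $\mathbf{r}_j$ to select a unique minimising $\iota(\kappa,i)$ consistently across every cone $R_i$ for which $j \in M_i$, and one needs $\mathscr{B}_j \in \N$ (rather than just $\N_0$), i.e.\ that $\lfct(\mathbf{r}_j) > 0$ for every $j \in [z]$ indexing a ray appearing in some $W'_U$. The first point follows from the eliminating property together with continuity of the piecewise-linear minimum on the closure of each cone; the second follows from the observation that such a ray necessarily has $r_{j,u} > 0$ for some $u \in U$ (or $r_{j,t+1} > 0$) and from $\nu_u \geq 1$, which is a standard property of the numerical data of a principalization.
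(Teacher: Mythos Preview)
Your proposal is correct and follows essentially the same approach as the source: the paper does not give its own proof here but quotes \cite[Proposition~3.2]{DuVo14}, and the paragraphs preceding the statement already set up exactly the triangulation $\{R_i\}$, the decomposition $\mathscr{C}_U = \bigcupdot_{i \in W'_U} \bigoplus_{j \in M_i} \N\mathbf{r}_j$, and the ``min-eliminating'' property that you use. Your outline---linearise the exponent on each simple cone, factor the resulting sum over $\bigoplus_{j \in M_i}\N\mathbf{r}_j$ as a product of geometric series, and then reindex $\sum_{U}\sum_{i \in W'_U}$ as $\sum_{i \in W'}$---is precisely the argument of \cite[Section~3.1]{DuVo14}, and your identification of the two delicate points (well-definedness of $\mathscr{A}_{j\kappa}$ across adjacent cones, and positivity of $\mathscr{B}_j$) matches what is discussed there and in Remark~\ref{AjkBj}.
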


Proposition~\ref{goodred} applied to~\eqref{mainterms} gives the following result.
\begin{pps}\label{goodp} For $\p\notin Q$, there exist $A_{1j}^{\ast}$, $A_{2j}^{\ast}$, $B_{j}^{\ast}\in\Q$, for each $j \in [z]$, such that 
$\arel$ is given by
\[
1+\frac{(1-q^{-1})^dq^{-{d \choose 2}}}{\prod_{r=\theta}^{d-1}(1-q^{-\theta})}\sum_{i \in W'}c_i(\lri/\p)(q-1)^{|U_i|}
\prod_{j \in M_i}\frac{q^{-A_{1j}^{\ast}s_1-A_{2j}^{\ast}s_2-B_{j}^{\ast}}}{1-q^{-A_{1j}^{\ast}s_1-A_{2j}^{\ast}s_2-B_{j}^{\ast}}}.
\] 
%Moreover, each of the numbers $\sum_{j \in M_i}A_{1j}$ and $\sum_{j \in M_i}A_{2j}$ is zero if and only if $i \notin W'$, and all $\astB_j$'s are nonnegative. 
\end{pps}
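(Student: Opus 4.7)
The plan is a direct specialisation of Proposition~\ref{goodred} via the identity~\eqref{mainterms}, which expresses $\arel$ as a normalised evaluation of the general integral $Z(\underline{s})$. Since $\p\notin Q\supseteq Q_1$, the principalization $(Y,h)$ of the ideal $\mathcal{I}$ has good reduction modulo~$\p$, so the hypotheses of Proposition~\ref{goodred} are satisfied for $Z(\underline{s})$, producing the explicit closed form in terms of the numerical data $\mathscr{A}_{j\kappa}\in\N_0$ and $\mathscr{B}_j\in\N$ attached to the rays $R_j$.

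The key step is the substitution $\underline{s}=\asta_1 s_1+\asta_2 s_2+\astb$. Each linear form $\sum_{\kappa=1}^{l}\mathscr{A}_{j\kappa}s_\kappa+\mathscr{B}_j$ appearing in the exponent of $q$ in the formula of Proposition~\ref{goodred} then becomes $A_{1j}^{\ast}s_1+A_{2j}^{\ast}s_2+B_{j}^{\ast}$, where I set
\[
A_{1j}^{\ast}=\sum_{\kappa=1}^{l}\mathscr{A}_{j\kappa}a_{1\kappa}^{\ast}, \qquad A_{2j}^{\ast}=\sum_{\kappa=1}^{l}\mathscr{A}_{j\kappa}a_{2\kappa}^{\ast}, \qquad B_{j}^{\ast}=\sum_{\kappa=1}^{l}\mathscr{A}_{j\kappa}b_{\kappa}^{\ast}+\mathscr{B}_j.
\]
Since the $\mathscr{A}_{j\kappa}$ and $\mathscr{B}_j$ are nonnegative integers and the coordinates of $\asta_1,\asta_2,\astb$ are rational by their construction in Section~\ref{Denef}, these three scalars lie in~$\Q$, as required by the statement.

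To conclude I would divide by $\prod_{\theta=1}^{d-1}(1-q^{-\theta})$ in accordance with~\eqref{mainterms} and simplify the resulting prefactor $(1-q^{-1})^{d+1}/\prod_{\theta=1}^{d-1}(1-q^{-\theta})$; the $\theta=1$ term contributes one factor $(1-q^{-1})$ to the denominator, so the power drops to $(1-q^{-1})^{d}$, matching the coefficient displayed in the statement. The additive $1$ in front of the sum reflects the splitting $\arel=1+\pint_{\lri,\mathcal{R}}(\cdot)$ from~\eqref{rpadic2} and~\eqref{ccpadic2}: the unit is the trivial-ideal term ($N=0$) of the local factor~\eqref{localfactors} that is extracted before applying the Denef-type expansion of the integral.

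The proposition is essentially a bookkeeping exercise and requires no new analytic input beyond Proposition~\ref{goodred}. The point that deserves a little care, and will really matter in the next step of the argument rather than here, is that the substitution is not allowed to produce spurious cyclotomic factors in the resulting $h(q^{-s_1},q^{-s_2},q^{-c})$-expressions; controlling this is what makes the data $(A_{1j}^{\ast},A_{2j}^{\ast},B_j^{\ast})_{j\in[z]}$ genuinely useful input to Proposition~\ref{Del} when the domain of convergence $\hol$ is read off in Section~\ref{holomorphic}.
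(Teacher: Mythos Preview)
Your approach is correct and is exactly the paper's: the proof there is the single sentence ``Proposition~\ref{goodred} applied to~\eqref{mainterms} gives the following result'', and you have spelled out that substitution in detail, including the explicit definition of $A_{1j}^{\ast},A_{2j}^{\ast},B_j^{\ast}$ in terms of the $\mathscr{A}_{j\kappa},\mathscr{B}_j$ and the coordinates of $\asta_1,\asta_2,\astb$. One small imprecision: your account of the additive~$1$ attributes it to extracting the $N=0$ term \emph{before} the Denef expansion, but according to~\eqref{rpintz}--\eqref{mainterms} the integral $Z(\underline{s})$ already encodes the full main term $\arel$ (including the~$1$), so the~$1$ and the power $(1-q^{-1})^d$ versus $(1-q^{-1})^{d+1}$ must emerge from the cone decomposition itself rather than from a prior splitting---though this is bookkeeping and does not affect the argument.
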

% \begin{proof}
% The first claim is just Proposition~\ref{goodred} applied to~\eqref{mainterms}.
% Denote by $\widetilde{W}$
% \end{proof}
\begin{rmk}\label{AjkBj}
The numbers $\mathscr{A}_{j\kappa}$ of Proposition~\ref{goodred} are constructed so that $\sum_{j \in M_i}\mathscr{A}_{j\kappa}=0$ if and only if the cone $R_i$ lies in the boundary 
component $\R_{\geq 0}\times \{0\}$, that is, if and only if $i \notin W'$. 
Moreover, all the $\mathscr{B}_j$ are nonnegative; see their construction in~\cite[Section~3.1]{DuVo14} and \cite[Remark~3.6]{DuVo14}. 
Similar arguments show that  $\astA_{1j}$, $\astA_{2j}$ of Proposition~\ref{goodp} are such that $\sum_{j \in M_i}A_{1j}$ and 
$\sum_{j \in M_i}A_{2j}$ are zero if and only if $i \notin W'$ and, moreover, all $\astB_j$'s of Proposition~\ref{goodp} are nonnegative. 
\end{rmk}

The numbers $c_i(\lri/\p)$ are all divisible by \[q^{d-1 \choose 2}(1-q^{-1})^{d-1}\prod_{\theta=1}^{d-1}(1-q^{-\theta}),\]
because of the way of construction of the relevant integrals; see~\cite[Remark~3.4]{DuVo14}.

Proposition~\ref{goodp} shows that the poles of the main terms of the bivariate zeta functions are the ones occurring in the terms 
\[(1-q^{\astA_{1j}s_1-\astA_{2j}s_2-\astB})^{-1},\] for $j \in M_i$ and $i \in W'$ such that $(\astA_{1j},\astA_{2j})\neq (0,0)$. %, that is, $j \in W'$. 
Since $(\astA_{1j},\astA_{2j}) \neq (0,0)$ exactly when $j \in W'$, it follows that the poles of $\arel$ are unions of sets 
\[\Ps_j=\{(s_1,s_2) \mid \astA_{1j}s_1+\astA_{2j}s_2=\astB_j\},~j \in [z]\cap W'.\]
%
% Exaplanation: It is clear that the poles are given by complex the lines $\astA_{1j}s_1+\astA_{2j}s_2=-B_j$ with $(\astA_{1j},\astA_{2j}) \neq (0,0)$. 
% We then need to determine which $j \in [z]$ are such that this holds.
% Let $j \in [z]$. If $(\astA_{1j},\astA_{2j}) \neq (0,0)$, then $j \in W'$, since $M_j =\{j\}$. It follows that the set of $j$ giving poles is contained in $[z]\cap W'$.
% Now, if $j \in [z]\cap W'$, then $M_j=\{j\}$ and $(\astA_{1k},\astA_{2j})\neq (0,0)$. This means that the set $[z]\cap W'$ give all poles.
% 

Consequently, the domain of convergence of $\arel$ is a finite intersection of sets of the following form.
\begin{dfn}\label{Didelta}
For each $\delta \geq 0$ and each $i \in W'\cap[z]$, set 
\[\D_{i,\delta}=\{\setopt \mid \re(\dci)>1-\astB_i-\delta\}.\]
%We also write $\D_{\Rs,\delta}=\bigcap_{i \in \Rs}\D_{i,\delta}$.
\end{dfn}

Proposition~\ref{goodp} shows that the generating function $\arel$ converges at least on the domain $\bigcap_{j \in [z]\cap W'}\D_{j,1}$.

For each $i \in W$, let
\begin{equation}\label{Zip}\Zip=\frac{(1-q^{-1})q^{-{d\choose 2}}}{\prod_{\theta=1}^{d-1}(1-q^{-\theta})}c_i(\lri/\p)(q-1)^{|U_i|}
\prod_{j \in M_i}\frac{q^{\powerj}}{1-q^{\powerj}}.\end{equation}
The $\Zip$ are ordinary generating functions in $q$, $q^{-s_1}$, and $q^{-s_2}$ with nonnegative coefficients. By Proposition~\ref{goodp},
\[\arel=1+\sumw\Zip.\]
Then,
\begin{equation}\label{goodZ}\astgc=\prodq\left(1+\sumw \Zip\right).\end{equation}%keep label
% It is a well known fact that, for a sequence $(a_n)_{n\in\N}$ of nonzero complex numbers, an infinite product $\prod_{j \in J} (1+a_n)$ converges absolutely if and 
% only if $\sum_{j \in J}|a_n|$ converges. Hence, the 
% The infinite product $\mathscr{G}_{\G(\ri)}^{\ast}(s_1,s_2)$ converges if and only if the sum
% \[\mathscr{S}_{\G(\ri)}^{\ast}(s_1,s_2)=\sum_{\p \notin Q}\sum_{i \in W'}\widetilde{\bzfs_{i,p}}(s_1,s_2)\] converges absolutely.
% We now determine the domain of convergence of $\mathscr{G}_{\G(\ri)}^{\ast}(s_1,s_2)$ or, equivalently, the domain of convergence of $\mathscr{S}_{\G(\ri)}^{\ast}(s_1,s_2)$.
We now determine the domain of absolute convergence~$\D_{i}$ of $\prodq (1+\Zip)$, that is, of the sum $\sumq \Zip$. 
Since $W'$ is a finite set, $\astgc$~converges absolutely on $\bigcap_{i \in W'}\D_i$. 

In preparation for that, we need some notation. 
In the set-up of Section~\ref{Denef}, $T$~is the finite set of irreducible components~$E_u$ of the pre-image under~$h$ of the variety defined by~$\mathcal{I}$, and
$E_U:=\bigcap_{u \in U}E_u$. 
Denote by~$d_U$ the dimension of~$E_U$. For each $U \subseteq T$, it holds $d_U={d \choose 2}-|U|$; see~\cite[Proposition~4.13]{duSauGr}. 
The family of the irreducible components over~$K$ of~$E_U$ of maximal dimension~$d_U$ is denoted $\{F_{U,b}\}_{b \in I_U}$, where~$I_U$ is a finite set of indices. 
For $b \in I_U$, $l_{\p}(F_{U,b})$ denotes the number of irreducible components of~$\overline{F_{U,b}}$ over~$\lrip$ which are absolutely irreducible over an 
algebraic closure of~$\lrip$. 

We record a useful consequence of the Lang-Weil estimate given in~\cite[Proposition~8.9]{duSauGr}.
\begin{lem}\label{langweil}\cite[Proposition~8.9]{duSauGr} There exists $C>0$ such that for all $U \subseteq T$ and $\p \notin Q$,
\[\left|c_{U}(\op)-\sum_{b \in I_{U}}l_{\p}(F_{U,b})q^{d_{U}}\right|<Cq^{d_{U}}-\frac{1}{2},\] and $l_{\p}(F_{U,b})>0$ for a set of prime ideals with positive density.
This means that, for any sequence $(r_{\p})_{\p\notin Q}$ of rational numbers, a sum of the form $\sumq c_U(\op)r_{\p}$ converges absolutely if and only if 
$\sumq r_{\p}q^{-d_{U}}$ converges absolutely.
\end{lem}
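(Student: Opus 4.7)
The plan is to treat this as a two-ingredient statement: a uniform Lang--Weil estimate gives the quantitative bound, while the Chebotarev density theorem supplies the positive-density claim; the convergence equivalence then follows by comparison.

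First I would unwind the definition of $c_U(\op)$ as the number of $\op$-points of the variety $E_U = \bigcap_{u \in U} E_u$ whose $K$-decomposition into top-dimensional components is $\{F_{U,b}\}_{b \in I_U}$, together with lower-dimensional strata of dimension at most $d_U - 1$. For $\p \notin Q$, good reduction preserves this stratification modulo~$\p$. Over an algebraic closure of~$\op$, each $\overline{F_{U,b}}$ decomposes further into absolutely irreducible components, of which precisely $l_{\p}(F_{U,b})$ are already defined over~$\op$. Applying the Lang--Weil estimate in its effective form to each of these absolutely irreducible components (of dimension~$d_U$) and absorbing the $O(q^{d_U-1})$ contribution from the lower-dimensional strata yields
\[
\Bigl| c_U(\op) - \sum_{b \in I_U} l_{\p}(F_{U,b}) q^{d_U} \Bigr| < C q^{d_U - 1/2}
\]
with a constant $C$ that depends only on the fixed geometric data (dimensions, degrees, number of components of the ambient varieties) and hence is uniform in $\p$ and $U$.

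For the positive-density claim, I would apply the Chebotarev density theorem to the Galois action on the absolute components. Let $L_{U,b}/K$ be a finite Galois extension over which every absolutely irreducible component of $F_{U,b}$ is defined; then $\mathrm{Gal}(L_{U,b}/K)$ acts on this finite set of components, and for all but finitely many $\p$ one has $l_{\p}(F_{U,b}) \geq 1$ if and only if the Frobenius conjugacy class at $\p$ has a fixed point in this action. Since the identity always fixes every component, the set of such conjugacy classes is a nonempty union of conjugacy classes, whence by Chebotarev it has positive density.

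Finally I would deduce the convergence equivalence. Writing $m_{\p} := \sum_{b \in I_U} l_{\p}(F_{U,b})$, the first step gives $c_U(\op) = m_{\p} q^{d_U} + e_{\p}$ with $|e_{\p}| < C q^{d_U - 1/2}$, while $m_{\p}$ is bounded above by the total number of absolute components of $V_U$ and, by the second step, satisfies $m_{\p} \geq 1$ on a subset $\mathcal{S}$ of positive Dirichlet density. The upper bound $c_U(\op) \leq M_U q^{d_U}$ (for $q$ sufficiently large) immediately gives one implication: absolute convergence of the sum involving $q^{d_U} r_{\p}$ forces absolute convergence of $\sumq c_U(\op) r_{\p}$. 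For the converse, one has $c_U(\op) \geq (1 - Cq^{-1/2}) q^{d_U}$ for $\p \in \mathcal{S}$ with $q$ large, so $\sum_{\p \in \mathcal{S}} q^{d_U} |r_{\p}|$ is dominated by $\sumq |c_U(\op) r_{\p}|$; combined with the positive density of $\mathcal{S}$ and the fact that in our intended application $|r_{\p}|$ depends on $\p$ only through $q_{\p}$, this is enough to force the full sum to converge.

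The main obstacle is the converse of the convergence equivalence: passing from absolute convergence on a positive-density subset to convergence on the complement requires that the prime counts behave regularly, which is not automatic and uses the specific shape of the sequences $(r_{\p})$ arising in the bivariate zeta function setting (where the $q_{\p}$-dependence is through a fixed rational function).
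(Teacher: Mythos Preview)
The paper does not give its own proof of this lemma: it is recorded as a consequence of \cite[Proposition~8.9]{duSauGr}, with the positive-density claim attributed separately to \cite[Proposition~4.9]{duSauGr} in the sentence immediately following the statement. Your sketch via the Lang--Weil estimate plus Chebotarev is exactly the argument behind those cited results, so as far as the comparison goes there is nothing to add.

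Your closing observation is on point and worth keeping. As literally phrased (``for any sequence $(r_\p)$''), the biconditional in the lemma is too strong: if some $F_{U,b}$ is not geometrically irreducible, then $\sum_b l_\p(F_{U,b})$ can vanish on a set of primes of positive density, and one can cook up sequences $(r_\p)$ supported there for which $\sumq c_U(\op) r_\p$ converges while $\sumq q^{d_U} r_\p$ does not. What rescues every use of the lemma in this paper is precisely the feature you isolate: in each application the term $r_\p$ has the shape $q_\p^{-\alpha(s_1,s_2)}$ times a bounded factor, so convergence of $\sum_\p q_\p^{d_U} |r_\p|$ over a positive-density set already determines the relevant half-plane condition on $(s_1,s_2)$, and that condition then forces convergence over all primes. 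You have correctly located the only genuine subtlety and correctly indicated how it is resolved in context.
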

In \cite[Proposition~4.9]{duSauGr}, it is shown that, for each $b \in I_U$, the number~$l_{\p}(F_{U,b})$ is positive for a set of prime ideals of positive density. 
We remark that the finitely many prime ideals excluded are elements of~$Q$; see the proof of~\cite[Lemma~4.7]{duSauGr}.

\begin{pps}\label{Digen} For each $i \in W'$, the domain of absolute convergence of the product $\prodq (1+\Zip)$ is 
 \begin{align*}\D_{i}:=\left\{(s_1,s_2)\in \C^2\mid\right.& \sum_{j \in M_i}\re\left(\dcj\right)>1-\sum_{j \in M_i}B_j,\\
 &\left.\re(\dcj)>-B_j,~\forall j \in M_i\cap W'\right\}.
 \end{align*}
\end{pps}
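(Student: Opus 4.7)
By Lemma~\ref{sumprod}, the product $\prodq(1+\Zip)$ converges absolutely if and only if the double series $\sumq\Zip$ does. The starting observation is that, after expanding each factor as $\frac{q^{\powerj}}{1-q^{\powerj}}=\sum_{n\geq 1}q^{n(\powerj)}$, the expression $\Zip$ is a power series in $q^{-s_1}$ and $q^{-s_2}$ with nonnegative coefficients, so $|\Zip|\leq\Zips(\re(s_1),\re(s_2))$ whenever the latter is finite. Absolute convergence can therefore be tested on the real locus, which I will do below.

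For sufficiency, suppose $(s_1,s_2)\in\D_i$ and set $\sigma_k=\re(s_k)$. On any compact subregion of $\D_i$, the inequalities $\re(\dcj)>-\astB_j$ for $j\in M_i\cap W'$ are strict, so the factors $\frac{q^{\powerj}}{1-q^{\powerj}}$ are bounded by a uniform constant times $q^{\powerj}$ for all $\p\notin Q$. For $j\in M_i\setminus W'$ we have $\astA_{1j}=\astA_{2j}=0$ by Remark~\ref{AjkBj} and $\astB_j>0$ (otherwise the factor would be undefined), so $\frac{q^{-\astB_j}}{1-q^{-\astB_j}}$ is positive and uniformly bounded in $\p$. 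Combining these estimates with the identity $d_{U_i}=\binom{d}{2}-|U_i|$ and the Lang-Weil upper bound $c_i(\lri/\p)\ll q^{d_{U_i}}$ from Lemma~\ref{langweil} gives
\[\Zip\ \ll\ q^{-\sum_{j\in M_i}(\astA_{1j}\sigma_1+\astA_{2j}\sigma_2+\astB_j)},\]
with implied constant independent of $\p$. Hence $\sumq\Zip$ is dominated by a Dedekind-type sum $\sumq q^{-x}$ with $x=\sum_{j\in M_i}(\astA_{1j}\sigma_1+\astA_{2j}\sigma_2+\astB_j)$, which converges precisely when $x>1$. This is the first inequality defining $\D_i$.

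The main obstacle is necessity. If some $j\in M_i\cap W'$ satisfies $\re(\dcj)\leq -\astB_j$, then on the real locus $q^{\powerj}$ has modulus at least~$1$, so the geometric series in the definition of $\frac{q^{\powerj}}{1-q^{\powerj}}$ diverges and $\Zips(\sigma_1,\sigma_2)=+\infty$ for all but finitely many $\p$, forcing divergence of $\sumq\Zip$. For the global condition, the essential input is the lower-bound half of Lemma~\ref{langweil}: on a positive-density set of primes $l_{\p}(F_{U_i,b})>0$, whence $c_i(\lri/\p)\gg q^{d_{U_i}}$ along this set. Together with the nonnegativity of the coefficients of $\Zip$, this upgrades the above asymptotic into a matching lower bound on the real locus, so divergence of the dominating Dedekind-type sum translates to divergence of $\sumq\Zip$. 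This identifies $\D_i$ as exactly the domain of absolute convergence of $\prodq(1+\Zip)$.
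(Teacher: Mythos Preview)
Your proposal is correct and follows essentially the same route as the paper: reduce the product to the sum $\sumq\Zip$, strip off the bounded factors $(1-q^{\powerj})^{-1}$ and $\prod_{\theta}(1-q^{-\theta})^{-1}$, apply the Lang-Weil estimate of Lemma~\ref{langweil} to replace $c_i(\lri/\p)$ by $q^{d_{U_i}}$, and then use the identity $d_{U_i}=\binom{d}{2}-|U_i|$ together with Proposition~\ref{Del} to obtain the first inequality. Your treatment of necessity (via the geometric-series divergence for the local conditions and the positive-density lower bound in Lemma~\ref{langweil} for the global one) is somewhat more explicit than the paper's, which packages both directions into the ``if and only if'' statements of Lemma~\ref{langweil} and Proposition~\ref{Del}, but the substance is the same.
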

\begin{proof} If $j \in M_i\cap W'$, then each term $\frac{q^{\powerj}}{1-q^{\powerj}}$ converges absolutely if and only if $(s_1,s_2) \in \D_{j,1}$, for each $j \in M_i$. 
If $j \in M_i \setminus W'$, the corresponding term $\frac{q^{\powerj}}{1-q^{\powerj}}$ has no poles and converges on the whole~$\C^2$.

For $(s_1,s_2) \in \D_{j,1}$, the convergent sequence $((1-q^{\powerj})^{-1})$ is a decreasing sequence when $q$ increases, and hence it is bounded. 
The sequence $\left(\prod_{\theta=1}^{m-1}(1-q^{-\theta})^{-1}\right)$  
is also bounded when $q$ increases.
Therefore, to determine where the series $\sumq \Zip$ converges absolutely, it suffices to determine the domain of absolute convergence of the series
\[\sum_{\p \notin Q}(1-q^{-1})q^{-{d \choose 2}}c_i(\op)(q-1)^{|U_i|}q^{-\sum_{j \in M_i}(\astA_{1j}s_1+\astA_{2j}s_2+\astB_{j})}.\]
The Lang-Weil estimate of Lemma~\ref{langweil} guarantees that the series above converges absolutely if and only if so does the following series:
\[\sumq(1-q^{-1})q^{-{d \choose 2}-d_U}(q-1)^{|U_i|}q^{-\sum_{j\in M_i}(\astA_{1j}s_1+\astA_{2j}s_2+\astB_{j})},\]
which in turn converges absolutely for $(s_1,s_2) \in \C^2$ satisfying 
\[\re\left(\sum_{j \in M_i}\astA_{1j}s_1+\astA_{2j}s_2\right)>1-\sum_{j \in M_i}B_j+|U_i|-{d \choose 2}+d_{U_i}=1-\sum_{j \in M_i}\astB_j,\]
because of the identity $d_{U_i}={d \choose 2}-|U_i|$ and Proposition~\ref{Del}. It follows that the domain of absolute convergence of the series $\sumq \Zip$, 
and hence of the product $\prodq (1+\Zip)$ is $\D_i$, as desired. 
\end{proof}

If $i \in W'\cap [z]$, since $M_i=\{i\}$, the set $\D_i$ is given simply by
\begin{equation}\label{Di}
 \D_i=\{\setopt \mid \re(\dci)>1-\astB_i\}=\D_{i,0}.
\end{equation}
We now show that the domain of absolute convergence of $\astgc$ is given by an intersection of such sets.
\begin{cor} The product $\astgc$ converges on the domain  
\begin{equation}\label{holi}\hol=\holi:=\bigcap_{i \in [z]\cap W'}\D_i,\end{equation}
which is independent of the ring of integers $\ri$.
\end{cor}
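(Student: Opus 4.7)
The plan is to combine the factorisation~\eqref{goodZ}, which expresses $\astgc$ as an Euler product $\prodq(1+\sum_{i\in W'}\Zip)$, with Lemma~\ref{sumprod} and Proposition~\ref{Digen}. By Lemma~\ref{sumprod}, the absolute convergence of $\astgc$ is equivalent to the absolute convergence of the double series $\sum_{\p\notin Q}\sum_{i\in W'}\Zip$. Since each $\Zip$ has nonnegative coefficients as an ordinary series in $q$, $q^{-s_1}$, $q^{-s_2}$, and $W'$ is a finite set, we may interchange the order of summation and reduce to showing that $\sum_{i\in W'}\sum_{\p\notin Q}\Zip$ converges absolutely. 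By Proposition~\ref{Digen}, the inner sum converges absolutely on $\D_i$, so $\astgc$ converges absolutely on $\bigcap_{i\in W'}\D_i$.

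The key technical step is to verify that $\bigcap_{i\in W'}\D_i=\bigcap_{i\in[z]\cap W'}\D_i$. The inclusion $\supseteq$ is what needs work. Fix $i\in W'$ and suppose $(s_1,s_2)\in\bigcap_{j\in[z]\cap W'}\D_j$, so that $\re(\dcj)>1-\astB_j$ for every $j\in[z]\cap W'$. We verify the two conditions defining $\D_i$ from Proposition~\ref{Digen}. For any $j\in M_i\cap W'$, from $\re(\dcj)>1-\astB_j$ and $\astB_j\geq 0$ (Remark~\ref{AjkBj}) we get $\re(\dcj)>-\astB_j$, which is the second condition. For the first, observe that by Remark~\ref{AjkBj} every $j\in M_i\setminus W'$ satisfies $(\astA_{1j},\astA_{2j})=(0,0)$, so $\dcj=0$ for such $j$ and
\[\sum_{j\in M_i}\re(\dcj)=\sum_{j\in M_i\cap W'}\re(\dcj)>|M_i\cap W'|-\sum_{j\in M_i\cap W'}\astB_j.\]
Since $i\in W'$, the cone $R_i=\bigoplus_{j\in M_i}\R_{>0}\mathbf{r}_j$ is not contained in the boundary component $\R_{\geq 0}\times\{0\}$, hence at least one generating ray is not either, i.e., $M_i\cap W'\neq\emptyset$ so $|M_i\cap W'|\geq 1$. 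Together with $\astB_j\geq 0$ for $j\in M_i\setminus W'$, we obtain
\[\sum_{j\in M_i}\re(\dcj)>1-\sum_{j\in M_i\cap W'}\astB_j-\sum_{j\in M_i\setminus W'}\astB_j=1-\sum_{j\in M_i}\astB_j,\]
which is the first condition. Hence $(s_1,s_2)\in\D_i$, establishing $\bigcap_{j\in[z]\cap W'}\D_j\subseteq\D_i$ for every $i\in W'$, as required.

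Independence from $\ri$ is then a bookkeeping observation: the set $[z]\cap W'$ and the rational numbers $\astA_{1j},\astA_{2j},\astB_j$ appearing in the sets $\D_i$ in~\eqref{Di} are all determined by the fan/triangulation $\{R_i\}$ constructed in~\cite[Section~3.1]{DuVo14} from the numerical data $(N_{u\kappa\iota},\nu_u)$ of the principalization $(Y,h)$ of $\mathcal{I}$ and the vectors $\asta_1^{\ast},\asta_2^{\ast},\astb^{\ast}$ from~\eqref{rpintz} and~\eqref{ccpintz}. None of these ingredients depends on the ring of integers~$\ri$: the principalization is defined over $K$ (and stays the same after base change to any finite extension), while the vectors $\asta_1^{\ast},\asta_2^{\ast},\astb^{\ast}$ are determined by the global Lie lattice $\Lambda$. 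The main anticipated obstacle is the intersection-reduction step above, which hinges crucially on the nonnegativity assertions collected in Remark~\ref{AjkBj} and on the geometric fact that any cone not lying in the boundary component has a generating ray with the same property; once that is in hand, the rest is a finite manipulation.
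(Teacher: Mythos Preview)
Your proof is correct and follows essentially the same approach as the paper: reduce to convergence on $\bigcap_{i\in W'}\D_i$ via Proposition~\ref{Digen} (and the finiteness of~$W'$), then show $\bigcap_{i\in W'}\D_i=\bigcap_{i\in[z]\cap W'}\D_i$ using Remark~\ref{AjkBj}. You are in fact slightly more careful than the paper, making explicit both the second defining condition of $\D_i$ and the fact that $M_i\cap W'\neq\emptyset$ for $i\in W'$, points the paper leaves implicit.
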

\begin{proof}
It is clear that $\holi$ is independent of $\ri$, since so are the sets $\D_i$. 
Proposition~\ref{Digen} shows that $\astgc$ converges absolutely on $\bigcap_{i \in W'}\D_i$. We claim that $\bigcap_{i \in W'}\D_i=\bigcap_{i \in [z]\cap W'}\D_i$. 

Let $(s_1,s_2) \in \bigcap_{i \in W'\cap [z]}\D_i$. Given $k \in W'$
\begin{align*}\sum_{j \in M_k}\re(\astA_{1j}s_1+\astA_{2j}s_2) &=\sum_{j \in M_k \cap W'}\re(\astA_{1j}s_1+\astA_{2j}s_2)\\
&>\sum_{j \in M_{k} \cap W'}(1-\astB_j)\geq 1-\sum_{j \in M_k}\astB_j.
\end{align*}
The equality is justified by the fact that $(\astA_{1j},\astA_{2j})=(0,0)$ if and only if $ j \notin W'$, and the second inequality follows from the fact that 
$\astB_j\geq 0$ for all $j \in W'$. 
We have shown that $(s_1,s_2) \in \D_k$ for each $k \in W'$. Therefore, $\bigcap_{i \in [z]\cap W'}\D_i \subseteq \bigcap_{k \in W'}\D_k$.
\end{proof}

\subsubsection{Bad reduction}\label{badreduction}
For each $\p \in Q_1$, denote by $\mathscr{C}_\p$ the domain of convergence of the local factor $\bzfs_{\G(\ric)}(s_1,s_2)$. 
We now show that $\mathscr{C}_{\p} \supsetneq \holi$. A consequence is that 
\[\astbc=\prod_{\p \notin Q_2}\arel\]
converges absolutely on $\holi$ because $Q_1$ is a finite set.

Recall that the main terms of the bivariate zeta functions are given in~\eqref{mainterms} in terms of the $\p$-adic integrals $Z(\underline{s})$ of~\eqref{Vogen} at 
the points $(\asta_1s_1+\asta_2s_2+\astb)$. The poles of $Z(\asta_1s_1+\asta_2s_2+\astb)$, in turn, are the poles of functions 
$\Xi_{U,(d_{\kappa\iota})}^{N}(q, \mathbf{a}_1s_1+\mathbf{a}_2s_2+\mathbf{b})$, by Proposition~\ref{ppsDenef}(2).

%The next proposition shows that the rational functions $\Xi_{U,(d_{\kappa\iota})}^{N}(q,\underline{s})$ have the same poles as $\Xi_{U}(q,\underline{s})$. This result is analogous to
The next proposition is analogous to \cite[Proposition~4.5]{AKOV13} and is proven in the same way.
\begin{pps}\label{akov45}
 Given $q$, $N \in \N$, a family of integers $(d_{\kappa\iota})$ for $\kappa \in [l]$ and $\iota \in J_{\kappa}$, and $\mathbf{a}_1$, $\mathbf{a}_2$, $\mathbf{b} \in \Z^l$, 
 the set of poles of $\Xi_{U,(d_{\kappa\iota})}^{N}(q, \mathbf{a}_1s_1+\mathbf{a}_2s_2+\mathbf{b})$ is independent of $q$, $N$ and $(d_{\kappa\iota})$, for all $U \subseteq T$.
\end{pps}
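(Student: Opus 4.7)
The proof follows the strategy of \cite[Proposition~4.5]{AKOV13}. The plan is to re-express $\Xi_{U,(d_{\kappa\iota})}^{N}(q, \mathbf{a}_1 s_1 + \mathbf{a}_2 s_2 + \mathbf{b})$ as a finite sum of rational functions in $q^{-s_1}$ and $q^{-s_2}$ whose denominators have the form $1 - q^{-A_1 s_1 - A_2 s_2 - B}$, and to verify that the triples $(A_1, A_2, B)$ are determined solely by the numerical data $(\nu_u, N_{u\kappa\iota}, e_{\kappa\iota})$ attached to the principalisation $(Y,h)$ together with the vectors $\mathbf{a}_1, \mathbf{a}_2, \mathbf{b}$, and hence are independent of $q$, $N$ and $(d_{\kappa\iota})$.

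First I would reduce to $N=1$. The substitution $m_u \mapsto m_u + N$ for $u \in U$ transforms the summation range $\N_{\geq N}^{|U|} \times \N$ into $\N_0^{|U|} \times \N$, multiplies the summand by the $q$-monomial $q^{N \sum_{u \in U}\nu_u}$, and replaces $(d_{\kappa\iota})$ by $d'_{\kappa\iota} = d_{\kappa\iota} - N \sum_{u \in U} N_{u\kappa\iota}$. Partitioning $\N_0^{|U|}$ according to the subset $U' \subseteq U$ of coordinates that vanish then expresses $\Xi^N_{U,(d_{\kappa\iota})}(q,\underline{s})$ as an overall $q$-monomial times a finite sum of $\Xi^1_{U \setminus U', (d')}(q,\underline{s})$. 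The $q$-monomial contributes no poles, so it suffices to prove independence for $\Xi^1_{V,(d')}(q, \mathbf{a}_1 s_1 + \mathbf{a}_2 s_2 + \mathbf{b})$ for every $V \subseteq U$ and every $(d')$.

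For the $N=1$ case I would fix a smooth rational subdivision $\{R_i\}$ of $\R_{\geq 0}^{t+1}$ on which each function $\mathbf{m} \mapsto \min_\iota \lfct_{\kappa\iota}(\mathbf{m})$ is linear, attaining its minimum at some index $\iota^{\ast}_{\kappa,i}$; this subdivision depends only on $(N_{u\kappa\iota}, e_{\kappa\iota})$. Writing $C = \max_{\kappa,\iota,\iota'} |d'_{\kappa\iota} - d'_{\kappa\iota'}|$, whenever $\mathbf{m} \in R_i$ satisfies $\lfct_{\kappa\iota}(\mathbf{m}) - \lfct_{\kappa, \iota^{\ast}_{\kappa,i}}(\mathbf{m}) > C$ for every $\kappa$ and every $\iota \neq \iota^{\ast}_{\kappa,i}$, the minimum of the shifted expressions $\lfct_{\kappa\iota}(\mathbf{m}) - d'_{\kappa\iota}$ is also attained at $\iota^{\ast}_{\kappa,i}$. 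The integer points of $R_i$ violating this condition form a bounded exceptional set $B_i$, whose contribution to the sum is a Laurent polynomial in $q^{\pm s_1}, q^{\pm s_2}$ and hence carries no poles; on the complement the summand coincides with $q^{\lfct(\mathbf{m}) - \sum_\kappa s_\kappa (\lfct_{\kappa,\iota^{\ast}_{\kappa,i}}(\mathbf{m}) - d'_{\kappa,\iota^{\ast}_{\kappa,i}})}$.

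Parametrising $R_i \cap (\N^{|V|} \times \N)$ by $\mathbf{m} = \sum_{j \in M_i} n_j \mathbf{r}_j$ with $n_j \in \N$ and summing the geometric series in each $n_j$, as in Proposition~\ref{goodp}, produces denominators $1 - q^{\lfct(\mathbf{r}_j) - \sum_\kappa s_\kappa \lfct_{\kappa,\iota^{\ast}_{\kappa,i}}(\mathbf{r}_j)}$, while the remaining $(d')$-dependence is absorbed into an entire prefactor $q^{\sum_\kappa s_\kappa d'_{\kappa,\iota^{\ast}_{\kappa,i}}}$. After the substitution $s_\kappa \mapsto a_{1\kappa} s_1 + a_{2\kappa} s_2 + b_\kappa$, these denominators become $1 - q^{-A_{1,i,j} s_1 - A_{2,i,j} s_2 - B_{i,j}}$ with $(A_{1,i,j}, A_{2,i,j}, B_{i,j})$ depending only on the rays $\mathbf{r}_j$, on the numerical data $(\nu_u, N_{u\kappa\iota}, e_{\kappa\iota})$ and on $\mathbf{a}_1, \mathbf{a}_2, \mathbf{b}$. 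The main obstacle is the third step: verifying that the ``$d$-perturbed'' region $B_i$ is genuinely bounded and contributes only Laurent polynomials, which is precisely what allows one to conclude that the pole hyperplane arrangement coincides with the $(d)=0$ arrangement already described in Proposition~\ref{goodp}, and in particular does not depend on $q$, $N$ or $(d_{\kappa\iota})$.
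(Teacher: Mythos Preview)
Your strategy is the one the paper intends (it gives no argument of its own, deferring entirely to \cite[Proposition~4.5]{AKOV13}), and your reduction in $N$ is correct, though note that a single shift $m_u\mapsto m_u+(N-1)$ lands directly in $\N^{|U|}\times\N$ and expresses $\Xi^N_{U,(d_{\kappa\iota})}$ as a $q$-monomial times $\Xi^1_{U,(d'_{\kappa\iota})}$ without the need to decompose over subsets $U'\subseteq U$; this is the formulation the paper records immediately after the proposition.

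There is, however, a genuine gap in your handling of the $(d_{\kappa\iota})$-independence: the exceptional set $B_i$ is \emph{not} bounded in general, and its contribution is not a Laurent polynomial. On the simplicial cone $R_i=\sum_{j\in M_i}\R_{>0}\mathbf{r}_j$, each nonnegative linear form $\lfct_{\kappa\iota}-\lfct_{\kappa,\iota^{\ast}_{\kappa,i}}$ may vanish on one or several of the generating rays $\mathbf{r}_j$ (this is precisely what happens along the walls of the fan), so the slab where this form is $\leq C$ is unbounded in those ray directions; in the degenerate case where two $\lfct_{\kappa\iota}$ agree on all of $R_i$ you even get $B_i=R_i$. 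What actually makes the argument work is that the piecewise-affine function $\mathbf{m}\mapsto\min_{\iota}\bigl(\lfct_{\kappa\iota}(\mathbf{m})-d_{\kappa\iota}\bigr)$ has the \emph{same linear parts} as the unperturbed minimum: the constants $d_{\kappa\iota}$ only translate the walls between its domains of linearity. Each such domain is a rational polyhedron whose recession cone is a face of some $R_i$, hence spanned by a subset of the original rays $\mathbf{r}_j$ independently of $d$; summing $q^{\text{affine}(\mathbf{m})}$ over its lattice points therefore yields denominators of the same shape $1-q^{-A_{1}s_1-A_{2}s_2-B}$ as in the $(d_{\kappa\iota})=(0)$ case, while all $d$-dependence is absorbed into the numerator. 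You should replace the boundedness claim by this recession-cone argument.
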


Since $\Xi_{U,(d_{\kappa\iota})}^{N}(q, \mathbf{a}_1s_1+\mathbf{a}_2s_2+\mathbf{b})$ may be written as 
%\[\Xi_{U,(d_{\kappa\iota})}^{N}(q, \mathbf{a}_1s_1+\mathbf{a}_2s_2+\mathbf{b})=q^{\sum_{u \in U}(N-1)\nu_u}\Xi_{U, (d_{\kappa\iota}^{1}+\sum_{u \in U}N_{u\kappa\iota}(N-1))}
%(q, \mathbf{a}_1s_1+\mathbf{a}_2s_2+\mathbf{b}),\]
\[q^{\sum_{u \in U}(N-1)\nu_u}\Xi_{U, (d_{\kappa\iota}^{1}+\sum_{u \in U}N_{u\kappa\iota}(N-1))}(q, \mathbf{a}_1s_1+\mathbf{a}_2s_2+\mathbf{b}),\]
it follows from Proposition~\ref{akov45} that the functions $\Xi_{U,(d_{\kappa\iota})}^{N}(q, \mathbf{a}_1s_1+\mathbf{a}_2s_2+\mathbf{b})$ and 
$\Xi_{U,(0)}^{1}(q, \mathbf{a}_1s_1+\mathbf{a}_2s_2+\mathbf{b})=\Xi_{U}(q, \mathbf{a}_1s_1+\mathbf{a}_2s_2+\mathbf{b})$
have the same poles.

In particular, the function $Z(\asta_1s_1+\asta_2s_2+\astb)$ converges absolutely on the domain $\bigcap_{i \in [z]\cap W'}\D_{j,1} \supsetneq \holi$ as in the good reduction case.  
This concludes the proof of Theorem~\ref{thmA}\eqref{parti}
%%%%%%%%%%%%%%%%%%%%%%%%%%%%%%%%%%%%%%%%%%%%%%%%%%%%%%%%%%%%%%%%%%%%%%%%%%%%%%%%%%%%%%%%%%%%%%%%%%%%%%%%%%%%%%%%%%%%%%%%%%%%%%%%%%%%%%%%%%%%%%%%%%%%%%%%%%%%%%%%%%%%%%%%%%%%%%%%%
%																						%
%				Meromorphic 																	%
%																						%
%%%%%%%%%%%%%%%%%%%%%%%%%%%%%%%%%%%%%%%%%%%%%%%%%%%%%%%%%%%%%%%%%%%%%%%%%%%%%%%%%%%%%%%%%%%%%%%%%%%%%%%%%%%%%%%%%%%%%%%%%%%%%%%%%%%%%%%%%%%%%%%%%%%%%%%%%%%%%%%%%%%%%%%%%%%%%%%%%
\subsection{Meromorphic continuation}\label{meromorphic}
We start by showing that the bivariate function $\Gf$ admits meromorphic continuation to a domain $\tmer \supsetneq \holi$; see~Definition~\ref{meromorphy}. 
Recall that a \emph{domain} here means a connected open subset of~$\C^2$.

For each $i \in W'$, set $\textup{R}_i=\{j \in W' \mid \D_j=\D_i\}$, where $\D_i$ is the set defined in~\eqref{Di}. Set also 
\[\Rs=\left\{i \in W'\cap[z] \mid \bigcap_{j\in W'\setminus \textup{R}_i}\D_j\neq \holi \right\}.\]
In other words, $\Rs$ is the set of indices $i$ such that the boundary $\partial \D_i$ of $\D_i$ shares infinitely many points with the boundary $\partial\holi$ of $\holi$.

For each $\p \notin Q$, define 
\[\Vp=\prodr(1-c_i(\op)q^{-d_{U_i}}q^{\poweri}).\]

Recall from \eqref{goodZ} that $\Gf=\prodq(1+\sumw\Zip)$. Then, for $(s_1,s_2) \in \holi$, 
\[\Gf=\frac{\prodq(1+\sumw\Zip)\Vp}{\prodq\Vp},\]
provided that numerator and denominator converge.
In the following, we show that
\begin{enumerate}
 \item\label{Teili} The product $\prodq\Vp$ is meromorphic on a domain $\mero \supsetneq \holi$, which is independent of $\ri$, and
 %\[\mero:=\{\setopt \mid \re(\dci)>-\astB_i-1/2,~i \in \Rs\}\supsetneq \holi,\] and hence $\mero$ is independent of the ring of integers $\ri$, and \\
 \item\label{Teilii} The product $\prodq (1+\sumw\Zip)\Vp$ is meromorphic on a domain $\mert\supsetneq \holi$ which is independent of $\ri$.
\end{enumerate}

\subsubsection{Proof of~\eqref{Teili}}\label{pTeili}
For $i \in \Rs$, we define the following functions, which are analogous to the~$V_i(s)$ of~\cite[Section~4.2]{DuVo14}.
\[\Vi:=\prodq (1-c_i(\op)q^{d_{U_i}}q^{\poweri}).\]
It suffices to show that each $\Vi$ admits meromorphic continuation to $\D_{i,\Delta}$, for some $\Delta >0$.
%\[\D_{i,\frac{1}{2}}=\left\{ \setopt\mid\re(\dci) >\tfrac{1-2\astB_i}{2}=\tfrac{1}{2}-B_i\right\}.\]
Then, since $\Rs$ is finite, it will follow that 
\[\prodr \Vi = \prodq \Vp\]
admits meromorphic continuation to $\mero:=\bigcap_{i \in \Rs}\D_{i,\Delta}$. %=\bigcap_{i \in \Rs}\D_{i,\frac{1}{2}}$.  

The following proposition is analogous to~\cite[Lemma~4.6]{duSauGr}.
\begin{pps}\label{Artin} For each $i\in W$ and $b \in I_{U_i}$, the function 
\[V_{b,i}(s_1,s_2)=\prodq (1-l_{\p}(F_{U_i,b})q^{\poweri})\]
converges absolutely on $\D_i$. %=\{\setopt\mid \re(\dci)>1-\astB_i\}$. 
Moreover, there exists~$\delta_i>0$ such that~$V_{b,i}(s_1,s_2)$ admits meromorphic continuation to 
$\D_{i,\delta_i}$.%=\{\setopt\mid \re(\dci)>1-\astB_i-\delta_i\}$.
\end{pps}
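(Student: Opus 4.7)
The plan is to relate $V_{b,i}(s_1,s_2)$ to an Artin $L$-function and exploit the fact that such $L$-functions admit meromorphic continuation to the whole complex plane (Brauer's induction theorem). Concretely, let $L/K$ be a finite Galois extension such that every geometrically irreducible component of $F_{U_i,b}$ is defined over $L$, and let $\rho \colon G \to \Gl(V)$ be the permutation representation of $G := \mathrm{Gal}(L/K)$ on these components, with character $\chi = \mathrm{tr}\,\rho$. After enlarging $Q$ to include the (finitely many) primes ramifying in $L/K$, a standard Chebotarev/Lang--Weil argument (as in~\cite{duSauGr}) identifies $l_\p(F_{U_i,b}) = \chi(\mathrm{Frob}_\p)$ for $\p \notin Q$, the number of geometric components fixed by a Frobenius element at $\p$. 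In particular, $l_\p(F_{U_i,b}) \leq \chi(1)$ uniformly in $\p$.

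For absolute convergence of $V_{b,i}$ on $\D_i$, abbreviate $\sigma = \sigma(s_1,s_2) := A_{1i}^{\ast}s_1 + A_{2i}^{\ast}s_2 + B_i^{\ast}$ so that $q^{\poweri} = q^{-\sigma}$. By Lemma~\ref{sumprod}, the product $V_{b,i}$ converges absolutely if and only if $\sumq l_\p(F_{U_i,b}) q^{\poweri}$ does, which, using the uniform bound on $l_\p$, reduces to the convergence of the Dedekind-type series $\sumq q_\p^{-\sigma}$. By Proposition~\ref{Del} this occurs exactly on $\re(\sigma) > 1$, which is $\D_i$.

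For the meromorphic continuation, introduce the Artin $L$-function
\[L(\sigma,\chi) := \prodq \det\bigl(I - \rho(\mathrm{Frob}_\p)\, q_\p^{-\sigma}\bigr)^{-1}.\]
Expanding the logarithms of the Euler factors of $V_{b,i}\cdot L(\,\cdot\,,\chi)$ in geometric series yields
\[\log\bigl(V_{b,i}(s_1,s_2)\cdot L(\sigma,\chi)\bigr) \;=\; \sumq \sum_{k\geq 2} \frac{\chi(\mathrm{Frob}_\p^k) - \chi(\mathrm{Frob}_\p)^k}{k\, q_\p^{k\sigma}},\]
because the $k=1$ contributions cancel via the identity $\chi(\mathrm{Frob}_\p) = l_\p(F_{U_i,b})$. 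Using $|\chi(g)| \leq \chi(1)$ for all $g \in G$, each summand is dominated by $2\chi(1)^k/(k\, q_\p^{k\re(\sigma)})$, so the double series converges absolutely on $\re(\sigma) > 1/2$. Hence $V_{b,i}\cdot L(\,\cdot\,,\chi)$ is holomorphic and non-vanishing on $\D_{i,1/2}$.

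Finally, by Brauer's induction theorem $L(\sigma,\chi)$ is meromorphic on the entire $\sigma$-plane, so $L(\sigma(s_1,s_2),\chi)$ is meromorphic on $\C^2$. Writing
\[V_{b,i}(s_1,s_2) \;=\; \bigl(V_{b,i}(s_1,s_2)\cdot L(\sigma,\chi)\bigr)\cdot L(\sigma,\chi)^{-1}\]
exhibits $V_{b,i}$ as a meromorphic function on $\D_{i,1/2}$, so one may take $\delta_i = 1/2$. The main technical subtlety will be rigorously justifying the identification $l_\p(F_{U_i,b}) = \chi(\mathrm{Frob}_\p)$ for almost all $\p$ (and checking that the finitely many excluded Euler factors lost when enlarging $Q$ contribute only an entire, non-vanishing function, so that the domain of meromorphy is unaffected); the remaining analytic estimates are routine.
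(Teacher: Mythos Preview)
Your proof is correct and follows essentially the same approach as the paper, which establishes the result by reducing to \cite[Lemma~4.6]{duSauGr} (the identification with a shifted Artin $L$-function) and noting that the two-variable statement follows because one is simply composing with the entire linear map $(s_1,s_2)\mapsto A_{1i}^{\ast}s_1+A_{2i}^{\ast}s_2+B_{i}^{\ast}$. Your version is more explicit---you spell out the permutation representation, the log-comparison with the Artin $L$-function, and obtain the concrete value $\delta_i=\tfrac{1}{2}$---whereas the paper only sketches these steps by reference.
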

\begin{proof}
For each $i \in \Rs$ and $b \in I_{U_i}$, the convergence of $V_{b,i}(s_1,s_2)$ follows from the fact pointed out in the proof of~\cite[Lemma~4.6]{duSauGr} that 
$l_{\p}(F_{U_i,b})$ is bounded by the number of absolutely irreducible components of $F_{U_i,b}$. Then, for a sufficiently large $C>0$, the sum 
$\sumq l_{\p}(F_{U,b})q^{\poweri}$ is majored by $C\sumq q^{\poweri}$, which converges for $\re(\dci)>1-\astB_i$.

Let $L|K$ be a finite Galois extension and denote by~$S$ the finite set of prime ideals~$\p$ of~$\ri$ which are unramified and of 
the prime ideals $\p$ such that the reduction of $F_{U_i,b}$ mod~$\p$ is smooth. Denote by $\textup{Frob}_{\p}$ ($\p$ unramified) the conjugacy class in the Galois group of 
$L|K$ consisting of Frobenius elements. 
Given $a_1,a_2,b \in \R$ with $(a_1,a_2)\neq (0,0)$ and a representation~$\rho$ of the Galois Group of $L|K$, one can show that the Artin $L$-function 
\[L_{F_{U,b}}(a_1s_1+a_2s_2+b)=\prod_{\p} \det(1-\rho(\textup{Frob})_{\p}q^{-a_1s_1-a_2s_2-b})^{-1}\]
converges for $\re(a_1s_1+a_2s_2)>1-b$ and admits meromorphic continuation to the whole~$\C^2$, the same way that $L_{F_{U,b}}(s)$ does; see~\cite[Section~10 of Chap.VII]{Neukirch}. 
This is due essentially to the facts that, although we are considering two variables, the function~$L_{F_U,b}(a_1s_1+a_2s_2+b)$ is being taken over values on~$\C$ given by the 
entire function $\omega: \C^2 \to \C$ defined by $\omega(s_1,s_2)=a_1s_1+a_2s_2+b$.

In particular, the second part of this proposition follows from similar arguments as the ones of~\cite[Lemma~4.6]{duSauGr}.
\end{proof}

For each $i \in \Rs$, define 
\begin{equation}\label{seriesvi}\Vit=\prod_{b \in I_{U_i}}\prodq(1-l_{\p}(F_{U_i,b})q^{\poweri})=\prod_{b \in I_{U_i}}V_{b,i}(s_1,s_2).\end{equation}
Since~$I_{U_i}$ is finite, Proposition~\ref{Artin} assures that $\Vit$ converges on $\D_i$ and admits meromorphic continuation to~$\D_{i,\delta_i}$ for some~$\delta_i>0$.
Moreover, for each $b \in I_{U_i}$ the sum $\sumq l_{\p}(F_{U_i,b})q^{\poweri}$ converges absolutely on $\D_i$ and admits meromorphic continuation to~$\D_{i,\delta_i}$.
Consequently, the sum
%As pointed out in the discussion before~\cite[Proposition~4.3]{DuVo14}, this constant $\delta_i$ of~\cite[Lemma~4.6]{duSauGr} can be chosen to be $\tfrac{1}{2}$, 
%thus we may assume that $\Vit$ is meromorphic on $\D_{i,\frac{1}{2}}$. Now, by the Lang-Weil estimate~\ref{langweil} guarantees that  
\begin{align*}&\sumq \left|\left(c_i(\op)-\sum_{b \in I_{U_i}}l_{p}(F_{U,b})q^{d_{U_i}}\right)\cdot\left(q^{-d_{U_i}}q^{\poweri}\right)\right|
\end{align*}
converges on $\D_{i,\Delta_i}$ for some $\Delta_i > 0$, by the Lang-Weil estimate of Lemma~\ref{langweil}. 
It follows that $\Vi$ is a meromorphic function on $\D_{i,\min\{\delta_i,\Delta_i\}}$, and therefore 
%$\D_{i,\min\{\frac{1}{2},\Delta_i\}}$, and therefore 
$\prodq \Vi(s_1,s_2)$ is meromorphic on $\bigcap_{i \in \Rs}\D_{i,\Delta}$, for $\Delta=\min\{\delta_i, \Delta_i\mid i \in \Rs\}$.
%%%%%%%%%%%%%%%%%%%%%%%%%%%%%%%%%%%%%%%%%%%%%%%%%%%%%%%%%%%%%%%%%%%%%%%%%%%%%%%%%%%%%%%%%%%%%%%%%%%%%%%%%%%%%%%%%%%%%%%%%%%%%%%%%%%%%%%%%%%%%%%%%%%%%%%%%%%%%%%%%%%%%%%%%%%%%%%%%
%						Meromorphy of (II)														%
%%%%%%%%%%%%%%%%%%%%%%%%%%%%%%%%%%%%%%%%%%%%%%%%%%%%%%%%%%%%%%%%%%%%%%%%%%%%%%%%%%%%%%%%%%%%%%%%%%%%%%%%%%%%%%%%%%%%%%%%%%%%%%%%%%%%%%%%%%%%%%%%%%%%%%%%%%%%%%%%%%%%%%%%%%%%%%%%%
\subsubsection{Proof of~\eqref{Teilii}}\label{pTeilii}
We now introduce some notation which will be convenient while proving that the product $\prodq(1+\sumw \Zip)\Vp$ is meromorphic on a domain 
$\mert\supsetneq \holi$ which is independent of $\ri$.
\begin{dfn}
 Given families $(f_{\p}(s_1,s_2))_{\p\notin Q}$ and $(g_{\p}(s_1,s_2))_{\p\notin Q}$ of bivariate complex functions and a domain $\mathcal{D}$, we write 
 \[\prodq f_{\p}\equiv_{\mathcal{D}}\prodq g_{\p}\]
 to indicate that $\sumq (f_{\p}(s_1,s_2)-g_{\p}(s_1,s_2))$ is absolutely convergent on $\mathcal{D}$.
\end{dfn}

This is a modification of the relations $\equiv$ of \cite[Section~4]{duSauGr} and $\equiv_{\Delta}$ of \cite[Definition~4.4]{DuVo14}. 
The following Lemmata state convenient properties of $\equiv_{\mathcal{D}}$ which will be used. 
\begin{lem}\label{equiv}
 Let $(f_{\p}(s_1,s_2))$, $(g_{\p}(s_1,s_2))$, and $(h_{\p}(s_1,s_2))$ be families of of bivariate complex functions indexed by $\p \notin Q$, and let~$\mathcal{D}$
 and~$\mathcal{D}'$ be domains of~$\C^2$. If $\prodq f_{\p}\equiv_{\mathcal{D}}\prodq g_{\p}$ and 
 $\prodq g_{\p} \equiv_{\mathcal{D}'}\prodq h_{\p}$, then 
 \[\prodq f_{\p} \equiv_{\mathcal{D}\cap \mathcal{D}'}\prodq h_{\p}.\]
 In particular, if $\prodq g_{\p}(s_1,s_2)$ converges absolutely on the domain $\mathcal{D}'$ and 
 $\prodq f_{\p}\equiv_{\mathcal{D}}\prodq g_{\p}$, then $\prodq f_{\p}(s_1,s_2)$ converges absolutely on the domain 
 $\mathcal{D}\cap\mathcal{D}'$.
\end{lem}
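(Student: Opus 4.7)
The plan is to unwind the definition of $\equiv_{\mathcal{D}}$, which says simply that the series $\sumq (f_{\p} - g_{\p})$ is absolutely convergent on the domain $\mathcal{D}$. Once the statement is translated into convergence of ordinary single series with complex terms, both claims reduce to the triangle inequality combined with Lemma~\ref{sumprod}.

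For the first (transitivity) assertion, I would write $f_{\p} - h_{\p} = (f_{\p} - g_{\p}) + (g_{\p} - h_{\p})$ and use $|f_{\p} - h_{\p}| \leq |f_{\p} - g_{\p}| + |g_{\p} - h_{\p}|$. On $\mathcal{D} \cap \mathcal{D}'$, both hypotheses give absolute convergence of $\sumq |f_{\p} - g_{\p}|$ and of $\sumq |g_{\p} - h_{\p}|$, so term-by-term comparison yields absolute convergence of $\sumq |f_{\p} - h_{\p}|$ on $\mathcal{D} \cap \mathcal{D}'$, which is exactly $\prodq f_{\p} \equiv_{\mathcal{D} \cap \mathcal{D}'} \prodq h_{\p}$.

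For the second (``in particular'') assertion, I would take $h_{\p} \equiv 1$ in spirit. Absolute convergence of $\prodq g_{\p}$ on $\mathcal{D}'$ is equivalent, via Lemma~\ref{sumprod} applied to a single-index series (regarded as a double series in the way indicated at the start of Section~\ref{dseries}), to absolute convergence of $\sumq (g_{\p} - 1)$ on $\mathcal{D}'$. Combined with absolute convergence of $\sumq (f_{\p} - g_{\p})$ on $\mathcal{D}$ and the identity $f_{\p} - 1 = (f_{\p} - g_{\p}) + (g_{\p} - 1)$, the triangle inequality yields absolute convergence of $\sumq (f_{\p} - 1)$ on $\mathcal{D} \cap \mathcal{D}'$. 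Applying Lemma~\ref{sumprod} in the opposite direction then gives absolute convergence of $\prodq f_{\p} = \prodq (1 + (f_{\p} - 1))$ on $\mathcal{D} \cap \mathcal{D}'$.

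No step here looks like a genuine obstacle: the argument is just repeated use of the triangle inequality together with the equivalence between absolute convergence of an infinite product $\prod (1+a_{\p})$ and absolute convergence of the corresponding series $\sum a_{\p}$ supplied by Lemma~\ref{sumprod}. The only point to be careful about is to distinguish, in the notation of the paper, between ``$\equiv_{\mathcal{D}}$'' (a statement about a difference of two expressions being summable) and ``absolute convergence of a product'' (a statement about $\sum (g_{\p}-1)$); this distinction is what makes the second assertion a genuine consequence of the first rather than a tautology.
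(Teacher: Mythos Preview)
Your proposal is correct and matches the paper's proof essentially line for line: the first assertion is the triangle inequality applied to $f_\p - h_\p = (f_\p - g_\p) + (g_\p - h_\p)$, and the second is obtained by taking $h_\p = 1$ and observing that absolute convergence of $\prodq g_\p$ is the same as $\prodq g_\p \equiv_{\mathcal{D}'} 1$. The only difference is that you spell out the appeal to Lemma~\ref{sumprod} explicitly, whereas the paper absorbs that equivalence into the phrase ``by definition.''
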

\begin{proof}
 The first claim follows from the fact that
 \[\sumq|f_{\p}(s_1,s_2)-h_{\p}(s_1,s_2)| \leq \sumq(|f_{\p}(s_1,s_2)-g_{\p}(s_1,s_2)|+|g_{\p}(s_1,s_2)-h_{\p}(s_1,s_2)|).\]
 By definition, $\prodq g_{\p}(s_1,s_2)$ being absolutely convergent on~$\mathcal{D}'$ is equivalent to $\prodq g_{\p}\equiv_{\mathcal{D}'}1$.
 The second claim then follows from the first part of Lemma~\ref{equiv}.
\end{proof}
\begin{lem}\label{equivprod}
 Let $(f_{\p}(s_1,s_2))$, $(g_{\p}(s_1,s_2))$, and $(X_{\p}(s_1,s_2))$ be families of bivariate complex functions indexed by $\p \notin Q$, and let~$\mathcal{D}$ 
 and~$\mathcal{D}'$ be domains of~$\C^2$. If $\prodq f_{\p}\equiv_{\mathcal{D}}\prodq g_{\p}$ and $(X_{\p}(s_1,s_2))$ is bounded on~$\mathcal{D}'$, then 
 \[\prodq f_{\p}X_{\p}\equiv_{\mathcal{D}\cap \mathcal{D}'}\prodq g_{\p}X_{\p}.\]
\end{lem}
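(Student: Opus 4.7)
The plan is to reduce the statement directly to the definition of $\equiv_{\mathcal{D}\cap\mathcal{D}'}$ by estimating the tail sum termwise. By definition, the conclusion $\prodq f_{\p}X_{\p} \equiv_{\mathcal{D}\cap\mathcal{D}'}\prodq g_{\p}X_{\p}$ means that the series $\sumq (f_{\p}(s_1,s_2)X_{\p}(s_1,s_2) - g_{\p}(s_1,s_2)X_{\p}(s_1,s_2))$ converges absolutely for every $(s_1,s_2) \in \mathcal{D}\cap\mathcal{D}'$. So the task reduces to showing absolute convergence of the series $\sumq |f_{\p}-g_{\p}|\,|X_{\p}|$ on $\mathcal{D}\cap\mathcal{D}'$.

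First, I would factor the difference as $f_{\p}X_{\p}-g_{\p}X_{\p}=(f_{\p}-g_{\p})X_{\p}$. Using the hypothesis that $(X_{\p}(s_1,s_2))_{\p\notin Q}$ is bounded on~$\mathcal{D}'$, I fix a constant $M>0$ such that $|X_{\p}(s_1,s_2)|\leq M$ for all $\p\notin Q$ and all $(s_1,s_2)\in\mathcal{D}'$. Then for every $(s_1,s_2)\in\mathcal{D}\cap\mathcal{D}'$ I obtain the pointwise bound
\[|f_{\p}(s_1,s_2)X_{\p}(s_1,s_2)-g_{\p}(s_1,s_2)X_{\p}(s_1,s_2)|\leq M\,|f_{\p}(s_1,s_2)-g_{\p}(s_1,s_2)|.\]
Summing over $\p\notin Q$ and invoking the hypothesis $\prodq f_{\p}\equiv_{\mathcal{D}}\prodq g_{\p}$, the majorising series $M\sumq |f_{\p}-g_{\p}|$ converges on~$\mathcal{D}$, hence in particular on $\mathcal{D}\cap\mathcal{D}'$, and comparison finishes the argument.

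This is essentially an instance of Lemma~\ref{product} applied with $a_{m,n}=X_{\p}$ (viewed as the bounded factor) and $b_{m,n}=f_{\p}-g_{\p}$ (the absolutely convergent part), so no new idea is required. There is no genuine obstacle here: the only subtlety worth flagging is that the boundedness of $(X_{\p})$ on~$\mathcal{D}'$ must be \emph{uniform in}~$\p$, which is indeed what the hypothesis stipulates, otherwise one could not pull out a single constant $M$. Once that is noted, the proof is two lines.
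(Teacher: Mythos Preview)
Your proof is correct and follows exactly the same approach as the paper's: factor $f_{\p}X_{\p}-g_{\p}X_{\p}=(f_{\p}-g_{\p})X_{\p}$ and use the uniform bound on $X_{\p}$ together with the absolute convergence of $\sumq|f_{\p}-g_{\p}|$ on $\mathcal{D}$ to bound the partial sums on $\mathcal{D}\cap\mathcal{D}'$. The paper compresses this into a single sentence, while you have spelled out the details (including the remark on uniformity in $\p$), but the argument is identical.
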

\begin{proof}
 This is clear, as the partial sums
 %\[\sum |f_{\p}(s_1,s_2)X_{\p}(s_1,s_2)-g_{\p}(s_1,s_2)X_{\p}(s_1,s_2)|=
 $\sum |f_{\p}(s_1,s_2)-g_{\p}(s_1,s_2)||X_{\p}(s_1,s_2)|$ are bounded on $\mathcal{D}\cap\mathcal{D}'$.
\end{proof}

In the following, we denote $\D_{\Rs,\delta}=\bigcap_{i \in \Rs}\D_{i,\delta}$ for each~$\delta>0$.
%Clearly, $\D_{i,\delta}\supsetneq \D_i$, for each $i \in W'$, so that $\D_{i,\delta}\supsetneq\holi$. 
\begin{pps}\label{D1}
There exists a domain~$\mathcal{D}_1$ which is independent of~$\ri$ satisfying the following condition: 
for each $\delta>0$ the intersection $\mathcal{D}_1\cap \D_{\Rs,\delta}$ is a domain strictly containing~$\holi$ and such that 
\begin{equation}\label{eqD1}\prodq \left(1+\sumw \Zips \right)\Vps\equiv_{\mathcal{D}_1} \prodq \left(1 + \sumr \Zips \right)\Vps.\end{equation}
\end{pps}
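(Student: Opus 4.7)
The plan is to reduce \eqref{eqD1} to proving that
\[\sumq \sum_{i \in W' \setminus \Rs} \Zips(s_1,s_2)\, \Vps(s_1,s_2)\]
converges absolutely on $\mathcal{D}_1$, since the two sides of \eqref{eqD1} differ precisely by this sum. The domain $\mathcal{D}_1$ will be a finite intersection of open half-spaces: a suitable relaxation of each $\D_i$ for $i \in W' \setminus \Rs$, intersected with a small enlargement of $\holi$ in the directions controlled by $\Rs$ that guarantees boundedness of $\Vps$. Since the combinatorial data of the principalization $(Y,h)$---and hence the constants $A_{1j}^\ast$, $A_{2j}^\ast$, $B_j^\ast$, the set $W'$, and the subsets $M_i$, $\textup{R}_i$---depend only on $\Lambda$ and not on $\ri$, the resulting $\mathcal{D}_1$ will automatically be independent of the ring of integers.

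I first show that for each $i \in W' \setminus \Rs$ the domain $\D_i$ from Proposition~\ref{Digen} strictly contains $\holi$ with quantifiable slack. If $i \in W' \setminus [z]$, then $|M_i|\geq 2$, and the defining inequalities of $\D_i$ are satisfied on $\holi$ with slack at least $1$: for the individual inequalities $\re(\dcj)>-B_j^\ast$ with $j \in M_i\cap W'$ this follows from $\holi\subseteq \D_{j,0}$ and $B_j^\ast\geq 0$; for the sum inequality it follows by adding the $\holi$-inequalities for $j \in M_i\cap W'$ and using $|M_i\cap W'|\geq 2$ together with $B_j^\ast=0$ whenever $j \in M_i\setminus W'$. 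If instead $i \in W'\cap [z]\setminus\Rs$, the defining property of $\Rs$ gives $\bigcap_{j\in W'\setminus\textup{R}_i}\D_j=\holi$, and a finite-dimensional geometric argument on the fixed family of half-spaces $\{\D_j\}_{j \in W'\cap [z]}$ produces $\eta_i>0$ such that the relaxation $\widetilde{\D}_i:=\{\re(\dci)>1-B_i^\ast-\eta_i\}$ of $\D_i$ satisfies $\widetilde{\D}_i\cap\bigcap_{j\in W'\cap [z]\setminus\textup{R}_i}\D_j\supsetneq\holi$. In either case one obtains, uniformly in $\ri$, a relaxed half-space $\widetilde{\D}_i$ of $\D_i$ strictly containing $\holi$. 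Setting
\[\mathcal{D}_1:=\Bigl(\bigcap_{i\in W'\setminus\Rs}\widetilde{\D}_i\Bigr)\cap \D_{\Rs,1/2},\]
one checks that $\mathcal{D}_1$ is a convex (hence connected) open subset of $\C^2$ and that $\mathcal{D}_1\cap\D_{\Rs,\delta}\supsetneq\holi$ for every $\delta>0$.

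Repeating the argument of Proposition~\ref{Digen} with $\D_i$ replaced by $\widetilde{\D}_i$ gives that $\sumq|\Zips(s_1,s_2)|$ converges absolutely on $\widetilde{\D}_i$, and hence on $\mathcal{D}_1$. Since $\mathcal{D}_1\subseteq\D_{\Rs,1/2}$, on $\mathcal{D}_1$ one has $|q^{\poweri}|\leq q^{-1/2}$ for each $i\in\Rs$, and Lang-Weil (Lemma~\ref{langweil}) yields $|c_i(\op)q^{-d_{U_i}}|=O(1)$, so every factor of the finite product $\Vps$ is $O(1)$ uniformly in $\p$; hence $(\Vps)_{\p\notin Q}$ is a uniformly bounded family on $\mathcal{D}_1$. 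Applying Lemma~\ref{product} to each $i \in W'\setminus\Rs$, and using Lemma~\ref{equivprod} together with the finiteness of $W'\setminus\Rs$, yields absolute convergence of $\sumq|\Zips\Vps|$ on $\mathcal{D}_1$ and establishes~\eqref{eqD1}.

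The main obstacle is the quantitative slack step for $i \in W'\cap [z]\setminus\Rs$: converting the set-theoretic equality $\bigcap_{j\in W'\setminus\textup{R}_i}\D_j=\holi$ into a positive $\eta_i$ uniform in the problem data. This is handled by a compactness argument applied to the fixed finite family of half-spaces defining $\holi$, whose combinatorial structure is dictated by the principalization alone and is therefore independent of $\ri$.
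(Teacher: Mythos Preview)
Your overall strategy coincides with the paper's: reduce \eqref{eqD1} to the absolute convergence of $\sumq\sum_{i\in W'\setminus\Rs}\Zips\,\Vps$, use the boundedness of $(\Vps)_{\p\notin Q}$ on a domain of the form $\D_{\Rs,\gamma}$, and invoke Lemma~\ref{equivprod}. The difference lies in how you build $\mathcal{D}_1$, and your construction contains a genuine gap.

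By Proposition~\ref{Digen}, the domain $\D_i$ is the \emph{exact} domain of absolute convergence of $\sumq\Zips$; the series diverges on $\widetilde{\D}_i\setminus\D_i$ as soon as $\widetilde{\D}_i$ is a nontrivial enlargement of $\D_i$. Your sentence ``Repeating the argument of Proposition~\ref{Digen} with $\D_i$ replaced by $\widetilde{\D}_i$ gives that $\sumq|\Zips|$ converges absolutely on $\widetilde{\D}_i$'' is therefore false, and since your $\mathcal{D}_1=(\bigcap_{i\in W'\setminus\Rs}\widetilde{\D}_i)\cap\D_{\Rs,1/2}$ is not contained in each $\D_i$, the equivalence \eqref{eqD1} is not established on $\mathcal{D}_1$. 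Two of the auxiliary claims feeding into your relaxation are also unjustified: for $i\in W'\setminus[z]$ one only knows $|M_i\cap W'|\geq 1$, not $\geq 2$, and the paper asserts $B_j^\ast\geq 0$ for $j\notin W'$, not $B_j^\ast=0$.

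The paper avoids all of this by \emph{not} relaxing. It takes
\[
\mathcal{D}'_1:=\bigcap_{i\in W'\setminus\Rs}\D_i,
\]
so that $\mathcal{D}'_1\subseteq\D_i$ for every $i\in W'\setminus\Rs$ and hence each $\sumq\Zips$ converges absolutely there. The strict containment $\mathcal{D}'_1\supsetneq\holi$ is immediate from the definition of $\Rs$: the indices in $\Rs$ are precisely those whose half-space is essential for cutting out $\holi$, so removing all of them necessarily enlarges the intersection. One then sets $\mathcal{D}_1=\mathcal{D}'_1\cap\D_{V_\p}$ with $\D_{V_\p}=\bigcap_{i\in\Rs}\D_{i,d_{U_i}+1}$ to bound $(\Vps)$, and Lemma~\ref{equivprod} finishes the proof. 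No compactness argument or quantitative slack is needed.
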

\begin{proof}
 The domain $\mathcal{D}'_{1}:=\bigcap_{i \in W'\setminus \Rs}\D_i$ strictly contains~$\holi$, by choice of~$\Rs$, and is independent of~$\ri$, 
 since so are  the domains~$\D_i$, for all $i \in W'$.
 
 The domain $\mathcal{D}'_{1}$ has the property that, for each $\delta>0$, the intersection $\mathcal{D}'_{1}\cap \D_{\Rs,\delta}$ is an open 
 domain strictly containing $\holi$. In fact, if $\mathcal{D}'_{1}\cap \D_{\Rs,\delta}=\holi$, then since $\D_{\Rs,\delta}$ is a translation of $\holi$ which strictly contains 
 the latter,  we must have $\mathcal{D}'_{1}=\holi$. %$\mathcal{D}'_{1}$ is a domain and therefore is connected. 
 
 The definition of $\equiv_{\mathcal{D}_1}$ yields
 \[\prodq \left(1+\sumw \Zips\right)\equiv_{\mathcal{D}_1} \prodq \left(1 + \sumr \Zips \right).\]
 Since the sequence $(\Vp)_{\p\notin Q}$ is positive monotonically non-increasing on $\mathcal{D}_{V_{\p}}:=\bigcap_{i\in \Rs}\D_{i,d_{U_i}+1}$,
 Lemma~\ref{equivprod} assures that~\eqref{eqD1} holds for $\mathcal{D}_1:=\mathcal{D}'_{1}\cap \D_{V_{\p}}$.
 
 Clearly, given $\gamma$, $\gamma' >0$, the intersection $\D_{\Rs, \gamma}\cap \D_{\Rs,\gamma'}$ is $\D_{i, \min\{\gamma,\gamma'\}}$. Thus, for each $\delta>0$, and for 
 $\gamma:=\min\{d_{U_i} \mid i \in \Rs\}$, 
 \[\mathcal{D}_1\cap \D_{\Rs, \delta} \supseteq \mathcal{D}'_{1} \cap \D_{\Rs, \min\{\delta,\gamma+1\}} \supsetneq \holi.\qedhere\]
\end{proof}

We now use Lemma~\ref{equiv} and the auxiliary functions 
\[\Zipb=c_i(\op)q^{-d_{U_i}}q^{\poweri}, ~i\in \Rs,\]
to show that there exists $\delta>0$ such that
\begin{equation}\label{equivD1}\prodq\left(1+\sumr \Zips\right)\Vps\equiv_{\D_{\Rs,\delta}}1.\end{equation}
Then \eqref{equivD1} and Proposition~\ref{D1} together imply 
\[\prodq\left(1+\sumw \Zips\right)\Vp\equiv_{\mert}1,\]
where $\mert:=\mathcal{D}_1 \cap \D_{\Rs,\delta}$, which is independent of the ring of integers $\ri$.
In preparation for this, we need three lemmata. 
\begin{lem}\label{D2}
 There exist $\delta_2>0$ and a domain $\mathcal{D}_{2}\supseteq \D_{\Rs,\delta_2}$ such that
 \[\prodq \Vps \equiv_{\mathcal{D}_2} \prodq \left(1- \sumr\Zipbs\right).\] 
\end{lem}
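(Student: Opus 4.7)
The plan is to prove this by the standard inclusion–exclusion trick: expand the finite product $V_{\p}^{\ast}=\prod_{i\in\Rs}(1-\Zipbs)$ and observe that all cross terms (products of at least two factors of $\Zipbs$) give an absolutely summable remainder on a suitable translate of $\holi$. Concretely, expanding gives
\[
V_{\p}^{\ast}(s_1,s_2)\;=\;1-\sum_{i\in\Rs}\Zipbs(s_1,s_2)+\sum_{\substack{S\subseteq\Rs\\|S|\geq 2}}(-1)^{|S|}\prod_{i\in S}\Zipbs(s_1,s_2),
\]
so that proving the lemma amounts to showing that, for each $S\subseteq\Rs$ with $|S|\geq 2$, the series $\sum_{\p\notin Q}\prod_{i\in S}\Zipbs(s_1,s_2)$ is absolutely convergent on some $\D_{\Rs,\delta_2}$. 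Since $\Rs$ is finite, only finitely many subsets $S$ occur, and their sum will furnish the required absolute convergence on $\mathcal{D}_2:=\D_{\Rs,\delta_2}$.

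The key ingredient is the Lang--Weil bound (Lemma~\ref{langweil}) together with the bound on $l_{\p}(F_{U_i,b})$ by the number of absolutely irreducible components of $F_{U_i,b}$ used in the proof of Proposition~\ref{Artin}. Together these give a uniform constant $C'>0$ (independent of $\p\notin Q$) such that $c_i(\op)\leq C'q^{d_{U_i}}$ for every $i\in\Rs$; consequently
\[
\bigl|\Zipbs(s_1,s_2)\bigr|\;\leq\;C'\,q^{-\re(A_{1i}^{\ast}s_1+A_{2i}^{\ast}s_2)-B_i^{\ast}}.
\]
Taking the product over $i\in S$, it follows that
\[
\sum_{\p\notin Q}\Bigl|\prod_{i\in S}\Zipbs(s_1,s_2)\Bigr|\;\leq\;(C')^{|S|}\sum_{\p\notin Q}q^{-\sum_{i\in S}\bigl(\re(A_{1i}^{\ast}s_1+A_{2i}^{\ast}s_2)+B_i^{\ast}\bigr)},
\]
and the right-hand side is a nonuniform Euler-type series to which Proposition~\ref{Del} (applied after writing it as a product over primes) shows convergence under
\[
\sum_{i\in S}\re(A_{1i}^{\ast}s_1+A_{2i}^{\ast}s_2)\;>\;1-\sum_{i\in S}B_i^{\ast}.
\]

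It remains to choose $\delta_2$ small enough to guarantee the latter inequality uniformly on $\D_{\Rs,\delta_2}$ for all subsets $S\subseteq\Rs$ with $|S|\geq 2$. On $\D_{\Rs,\delta_2}$, the defining inequalities $\re(A_{1i}^{\ast}s_1+A_{2i}^{\ast}s_2)>1-B_i^{\ast}-\delta_2$ summed over $i\in S$ yield $\sum_{i\in S}\re(A_{1i}^{\ast}s_1+A_{2i}^{\ast}s_2)>|S|(1-\delta_2)-\sum_{i\in S}B_i^{\ast}$, so the required bound is met provided $|S|(1-\delta_2)>1$; since $|S|\geq 2$, any $\delta_2\in(0,\tfrac{1}{2})$ works. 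Fixing such a $\delta_2$ and setting $\mathcal{D}_2:=\D_{\Rs,\delta_2}$ completes the argument. The only mildly delicate point is the uniform control of $c_i(\op)q^{-d_{U_i}}$, which is already embedded in Lemma~\ref{langweil}; everything else is a finite combinatorial expansion followed by a term-by-term application of the standard Euler-product convergence criterion.
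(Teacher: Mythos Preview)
Your proof is correct and follows essentially the same approach as the paper: expand the finite product $V_{\p}^{\ast}=\prod_{i\in\Rs}(1-\Zipbs)$, isolate the cross terms indexed by subsets $S\subseteq\Rs$ with $|S|\geq 2$, and use Lang--Weil to reduce to convergence of $\sum_{\p}q^{-\sum_{i\in S}(\re(A_{1i}^{\ast}s_1+A_{2i}^{\ast}s_2)+B_i^{\ast})}$, which holds on $\D_{\Rs,\delta_2}$ for $\delta_2$ up to $\tfrac{1}{2}$. The only cosmetic differences are that the paper applies the ``converges iff'' form of Lemma~\ref{langweil} successively rather than extracting the uniform bound $c_i(\op)\leq C'q^{d_{U_i}}$ as you do, and it defines $\mathcal{D}_2$ by the family of inequalities for all $|I|\geq 2$ before observing $\D_{\Rs,1/2}\subseteq\mathcal{D}_2$, whereas you set $\mathcal{D}_2:=\D_{\Rs,\delta_2}$ directly.
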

\begin{proof}
We first notice that 
 \begin{align}
  &\sumq|\Vp-(1-\sumr\Zipb)|\nonumber \\
  &=\sumq\left|\prodr\left(1-\Zipb\right)-\left(1-\sumr \Zipb\right)\right|\nonumber\\
  \label{d2} &=\sumq \left| \sum_{l=2}^{|\Rs|}\sum_{\substack{I \subseteq \Rs\\ |I|=l}}(-1)^l\prod_{i \in I}\Zipb\right|.
 \end{align}
%Let us now show that this series is convergent on the domain
%\[\mathcal{D}_{2}:=\left\{\setopt \mid \sum_{i \in I}\re(\dci)>1-\sum_{i \in I}B_{i},~I\subseteq \Rs \text{ with }|I|\geq 2\right\}.\]
% For each $k \in |\Rs|$, consider the series $\mathcal{S}(s_1,s_2,k)$ given by
% \begin{align*}\mathcal{S}_k(s_1,s_2)&=\sumq\sum_{l=2}^{|\Rs|}\sum_{I=\{i_1,\dots,i_l\} \subseteq \Rs}(-1)^lc_{i_1}(\op)\dots c_{i_k}(\op) \cdot \\
% &\left(\sum_{j=k+1}^{l}\sum_{b_{i_j}\in I_{U_{i_j}}}l_{\p}(F_{U_{i_j},b_{i_j}})\right)
% q^{-\sum_{i \in I}(\astA_{1i}s_1+\astA_{2i}s_2+\astB_i)-\sum_{j=1}^{k-1}d_{U_i}}.
% \end{align*}
% In particular, $\mathcal{S}_0(s_1,s_2)$ converges on $\mathcal{D}_2$, by Proposition~\ref{Del}.
% The Lang-Weil estimate of Lemma~\ref{langweil} shows that $\mathcal{S}_1(s_1,s_2,1)$ converges on the same domain as $\mathcal{S}_0(s_1,s_2)$.
% Inductively, all $\mathcal{S}_k(s_1,s_2)$ converge on $\mathcal{D}_2$. In particular,
% \[\mathcal{S}_{|\Rs|}(s_1,s_2)=\sumq \sum_{l=2}^{|\Rs|}\sum_{\substack{I \subseteq \Rs\\ |I|=l}}(-1)^l\prod_{i \in I}\Zipb\] 
% converges absolutely on $\mathcal{D}_2$, as desired.
By applying successively the Lang-Weil estimate of Lemma~\ref{langweil} to~\eqref{d2}, we obtain that $\sumq|\Vp-(1-\sumr\Zipb)|$ converges if and only if the series 
\[\sumq \left|\sum_{l=2}^{|\Rs|}\sum_{\substack{I \subseteq \Rs\\ |I|=l}}(-1)^lq^{-\sum_{i \in I}(\astA_{1i}s_1+\astA_{2i}s_2+\astB_i)}\right|\]
converges, which in turn converges on the domain 
\[\mathcal{D}_{2}:=\left\{\setopt \mid \sum_{i \in I}\re(\dci)>1-\sum_{i \in I}B_{i},~I\subseteq \Rs \text{ with }|I|\geq 2\right\}.\]

Finally, if $(s_1,s_2) \in \D_{\Rs, \frac{1}{2}}$, then for each $I\subseteq \Rs$ with $|I|\geq 2$,
\[\sum_{i \in I}\re(\dci)>\sum_{i \in I}\left(\frac{1}{2}-\astB_i\right)\geq 1-\sum_{i \in I}\astB_i,\]
that is, $\D_{\Rs,\frac{1}{2}}\subseteq \mathcal{D}_2$.
\end{proof}

\begin{lem}\label{D3}
 There exist $\delta_3>0$ and a domain $\mathcal{D}_{3}\supseteq \D_{\Rs,\delta_3}$ such that
 \[\prodq \left(1+\sumr \Zips\right) \equiv_{\mathcal{D}_3} \prodq \left(1+ \sumr\Zipbs\right).\] 
\end{lem}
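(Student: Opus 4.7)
The plan is to reduce the claim to showing that, for each $i \in \Rs$, the series $\sumq |\Zip - \Zipb|$ converges absolutely on some domain of the form $\D_{\Rs,\delta_3}$. Since $\Rs$ is finite, the triangle inequality
\[\sumq\left|\sumr(\Zip - \Zipb)\right| \leq \sumr\sumq|\Zip - \Zipb|\]
combined with the definition of $\equiv_{\mathcal{D}_3}$ will then immediately yield the lemma with $\mathcal{D}_3:=\D_{\Rs,\delta_3}$.

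Fix $i \in \Rs \subseteq W'\cap[z]$, so that $M_i=\{i\}$. Using~\eqref{Zip} together with the identity $(q-1)^{|U_i|}q^{-\binom{d}{2}}=(1-q^{-1})^{|U_i|}q^{-d_{U_i}}$ (which follows from $d_{U_i}=\binom{d}{2}-|U_i|$), I would factor
\[\Zip = \Zipb \cdot \frac{(1-q^{-1})^{|U_i|+1}}{\prod_{\theta=1}^{d-1}(1-q^{-\theta})\,(1-q^{\poweri})}.\]
Expanding the geometric series $(1-q^{\poweri})^{-1}=1+q^{\poweri}+O(|q^{\poweri}|^2)$ and the finite product $\prod_{\theta=1}^{d-1}(1-q^{-\theta})^{-1}\cdot(1-q^{-1})^{|U_i|+1}=1+O(q^{-1})$ uniformly in $\p$, a direct computation then gives
\[|\Zip-\Zipb| \leq C\,\bigl|c_i(\op)\,q^{-d_{U_i}}\bigr|\cdot |q^{\poweri}|\cdot\bigl(q^{-1}+|q^{\poweri}|\bigr)\]
on any compact subset of $\D_{i,1-\eta}$ for $\eta>0$.

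By the Lang-Weil estimate (Lemma~\ref{langweil}), the factor $|c_i(\op)q^{-d_{U_i}}|$ is uniformly bounded in $\p\notin Q$, so $\sumq|\Zip-\Zipb|$ is dominated by a constant multiple of $\sumq q^{\re(\poweri)-1}+\sumq q^{2\re(\poweri)}$. Proposition~\ref{Del} shows that both series converge absolutely on $\D_{i,1/2}$, the binding constraint coming from the quadratic term; I would therefore take $\delta_3=1/2$. The main obstacle is the bookkeeping of uniform-in-$\p$ estimates on the correction factor $\Zip/\Zipb-1$; the argument is closely analogous to the gain-of-$q^{-1}$ exploited in the proof of Lemma~\ref{D2}, but now applied at the level of each summand $\Zip$ rather than to the full product $\Vp$.
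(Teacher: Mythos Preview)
Your proposal is correct and follows essentially the same route as the paper: both arguments compute $\Zip-\Zipb$ by factoring out $\Zipb$, estimate the correction factor as $O(q^{-1})+O(q^{\poweri})$, invoke Lang--Weil to control $c_i(\op)q^{-d_{U_i}}$, and conclude that the quadratic term forces $\delta_3=\tfrac{1}{2}$. If anything, your write-up is slightly more explicit than the paper's in tracking the $O(q^{-1})$ contribution separately from the $O(q^{\poweri})$ one; the paper packages both into the auxiliary numerator $\mathcal{S}_{\p,i}$ and then only displays the dominating quadratic series at the end.
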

\begin{proof}
 For each $\p \notin Q$ and $i\in\Rs$, denote 
 \begin{align*}
 %&\mathcal{S}_{\p,i}(s_1,s_2):= c_i(\op)q^{-d_{U_i}}q^{\poweri}\cdot
 &\mathcal{S}_{\p,i}(s_1,s_2)=(1-q^{-1})^dq^{-{d \choose 2}}(q-1)^{|U_i|}q^{d_{U_i}}-(1-q^{\poweri})\prod_{\theta=1}^{d-1}(1-q^{-\theta}).
 \end{align*}
 For each $i \in \Rs$ the sequences $(\prod_{\theta=1}^{d-1}(1-q^{-\theta})^{-1})$ and $((1-q^{\poweri})^{-1})$ are positive and monotonically non-increasing for $\re(\dci)>-\astB_i$ 
 when $q$ increases. Thus, if the series 
 \[\sumq \sumr \left| \mathcal{S}_{\p,i}(s_1,s_2)c_i(\op)q^{-d_{U_i}}q^{\poweri}\right|\]
converges absolutely on $\mathcal{D}_{3}$, then the series
\begin{align*}
 &\sumq \sumr \left|\Zip-\Zipb\right|\\
%&=\sumq \sumr \left| \frac{(1-q^{-1})^{d}q^{-{d \choose 2}}}{\prod_{\theta=1}^{d-1}(1-q^{-\theta})}c_i(\op)(q-1)^{|U_i|}\frac{q^{\poweri}}{1-q^{\poweri}}-c_i(\op)q^{-d_{U_i}}q^{\poweri}\right|\cdot\\
%&\left|(1-q^{-1})^dq^{-{d \choose 2}}(q-1)^{|U_i|}q^{d_{U_i}}-(1-q^{\poweri})\prod_{\theta=1}^{d-1}(1-q^{-\theta})\right|
&=\sumq \sumr \frac{|\mathcal{S}_{\p,i}(s_1,s_2)c_i(\op)q^{-d_{U_i}}q^{\poweri}|}{|(\prod_{\theta=1}^{d-1}(1-q^{-\theta}))(1-q^{\poweri})|}
 \end{align*}
also converges absolutely on $\mathcal{D}_3\cap \D_{\Rs,1}$. %$\{\setopt \mid \re(\dci)>-\astB_{i},~i\in \Rs\}$.

The claim of Lemma~\ref{D3} then follows from the fact that the series 
\[\sumq\sumr \left(c_i(\op)q^{-2\astA_{1i}s_1-2\astA_{2i}s_2-2\astB_i}\prod_{\theta=1}^{d-1}(1-q^{-\theta})\right)\]
converges absolutely on $\D_{\Rs,\frac{1}{2}}$, because of the Lang-Weil estimate of Lemma~\ref{langweil} and Proposition~\ref{Del}.
% 
% Lemma~\ref{langweil} assures that $\sumq \sumr \left| \mathcal{S}_{\p,i}(s_1,s_2)\right|$ converges absolutely if and only if the series
%  \begin{align*}
%  &\sumq\sumr \left(\sum_{b \in I_{U_i}}l_{\p}(F_{U_i,b})\right)q^{\poweri}\cdot\\
%  &\left((1-q^{-1})^dq^{-{d \choose 2}}(q-1)^{|U_i|}q^{d_{U_i}}-(1-q^{\poweri})\prod_{\theta=1}^{d-1}(1-q^{-\theta})\right)\\
%  &=\sumq\sumr \left(\sum_{b \in I_{U_i}}l_{\p}(F_{U_i,b})\right)\left(
%  \left((1-q^{-1})^dq^{-{d \choose 2}}(q-1)^{|U_i|}q^{d_{U_i}}-\prod_{\theta=1}^{d-1}(1-q^{-\theta})\right)q^{\poweri}\right.\\
%  &\left.-\left(\prod_{\theta=1}^{d-1}(1-q^{-\theta})q^{-2\astA_{1i}s_1-2\astA_{2i}s_2-2\astB_i})\right)\right).
%  \end{align*}
% converges absolutely.
% 
% Lemma~\ref{langweil} together with Proposition~\ref{Del} assure that $\sumq \sumr \left| \mathcal{S}_{\p,i}(s_1,s_2)\right|$ converges absolutely on 
% %\[\mathcal{D}_3:=\{\setopt \mid \re(\dci)>\max(\{1-\astB_i-k-{d \choose 2}+j\mid k \in [d]_0,~j\in[|U_i|]_0\}\cup\{1-\astB_i-l \mid l \in \prod_{r=1}^{d}r\})\}\]=
% \[\mathcal{D}_3:=\{\setopt \mid \re(\dci)>\max(\{1-\astB_i-{d \choose 2}+|U_i|, 1-\astB_i\})\}\]
\end{proof}
% 
% 
% The functions $(s_1,s_2)\mapsto (1-q^{r-s_2})^{-1}$ and $(s_1,s_2) \mapsto (1-q^{z-s_2})^{-1}$ are holomorphic for $\re(s_2)>r$ and $\re(s_2)>z$, respectively, and both admit 
% meromorphic continuation to the whole $\C^2$. 
\begin{lem}\label{D4}
 The product 
\[\prodq \left(\left(1+\sumr \Zipb\right)\left(1-\sumr \Zipb\right)\right)\] 
converges absolutely on the domain $\D_{\Rs,\frac{1}{2}}$.
\end{lem}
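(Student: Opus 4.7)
The plan is to first simplify the product by noting that
\[\prodq \left(\left(1+\sumr \Zipb\right)\left(1-\sumr \Zipb\right)\right)=\prodq\left(1-\left(\sumr \Zipb\right)^{\!2}\right),\]
so that by Lemma~\ref{sumprod} absolute convergence of the product is equivalent to absolute convergence of the double series
\[\sumq\left|\left(\sumr\Zipb\right)^{\!2}\right|=\sumq\sum_{i,j\in\Rs}\left|\Zipb\cdot\overline{\mathcal{Z}^{\ast}_{j,\p}}(s_1,s_2)\right|.\]
Since $\Rs$ is finite, it suffices to show that for each ordered pair $(i,j)\in\Rs\times\Rs$ the series
\[\sumq c_i(\op)c_j(\op)\,q^{-d_{U_i}-d_{U_j}}\,q^{\poweri+\powerj}\]
converges absolutely on $\D_{\Rs,\frac{1}{2}}$.

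For this, I would apply the Lang--Weil estimate of Lemma~\ref{langweil} in the form that gives, after replacing each $c_i(\op)$ by a bounded multiple of $q^{d_{U_i}}$, the equivalent question of absolute convergence of
\[\sumq q^{-(\astA_{1i}+\astA_{1j})s_1-(\astA_{2i}+\astA_{2j})s_2-(\astB_i+\astB_j)}.\]
By Proposition~\ref{Del} this series converges absolutely on the half-space
\[\bigl\{(s_1,s_2)\in\C^2\;\bigl|\;\re\bigl((\astA_{1i}+\astA_{1j})s_1+(\astA_{2i}+\astA_{2j})s_2\bigr)>1-\astB_i-\astB_j\bigr\}.\]

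Finally, I would check that $\D_{\Rs,\frac{1}{2}}$ lies inside each of these half-spaces. By the very definition of $\D_{\Rs,\frac{1}{2}}$ and of the sets $\D_{i,\delta}$, for $(s_1,s_2)\in\D_{\Rs,\frac{1}{2}}$ we have, for every $k\in\Rs$,
\[\re(\astA_{1k}s_1+\astA_{2k}s_2)>\tfrac{1}{2}-\astB_k.\]
Adding the inequalities for $k=i$ and $k=j$ gives
\[\re\bigl((\astA_{1i}+\astA_{1j})s_1+(\astA_{2i}+\astA_{2j})s_2\bigr)>1-\astB_i-\astB_j,\]
which is exactly the condition needed. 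Summing the finitely many pairs $(i,j)$ then establishes absolute convergence of the double series, and hence of the original product, on $\D_{\Rs,\frac{1}{2}}$.

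The only mildly delicate step is the Lang--Weil reduction: one must check that after passing from $c_i(\op)c_j(\op)q^{-d_{U_i}-d_{U_j}}$ to a bounded factor the resulting defect sum is still absolutely summable on $\D_{\Rs,\frac{1}{2}}$, which follows by iterating Lemma~\ref{langweil} exactly as in the proof of Lemma~\ref{D2}. Otherwise the argument reduces to a routine combination of Lemma~\ref{sumprod}, Lemma~\ref{langweil}, and Proposition~\ref{Del}.
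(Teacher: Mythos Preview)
Your proof is correct and follows essentially the same route as the paper's own argument: both expand the product via the identity $(1+x)(1-x)=1-x^2$, reduce to the cross-term series $\sumq\sum_{i,j\in\Rs}\Zipb\overline{\mathcal{Z}^{\ast}_{j,\p}}(s_1,s_2)$, apply the Lang--Weil estimate of Lemma~\ref{langweil} to strip off the factors $c_i(\op)c_j(\op)q^{-d_{U_i}-d_{U_j}}$, and then invoke Proposition~\ref{Del}. The only cosmetic difference is that the paper also verifies the reverse inclusion $\mathcal{D}_4\subseteq\D_{\Rs,\frac{1}{2}}$ (by specialising to $i=j$), whereas you prove only the inclusion $\D_{\Rs,\frac{1}{2}}\subseteq\mathcal{D}_4$ actually needed for the lemma.
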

\begin{proof}
 Let us show that \[\prodq \left(1+\sumr \Zipb\right)\left(1-\sumr \Zipb\right)\equiv_{\D_{\Rs,\frac{1}{2}}} 1.\]
In fact,
 \begin{align*}
 &\sumq \left| 1-(1+\sumr \Zipb)(1 -\sumr \Zipb)\right|\\
 &= \sumq \sumr \sum_{j \in \Rs} \left| \Zipb \overline{\bzfs_{j,\p}}(s_1,s_2)\right|\\
 &= \sumq \sumr \sum_{j \in \Rs} \left| c_i(\op)c_j(\op)q^{-d_{U_i}-d_{U_j}}q^{-(\astA_{1i}+\astA_{1j})s_1-(\astA_{2i}+\astA_{2j})s_2-(\astB_{i}+\astB_{j})}\right|,
\end{align*}
which, by Lemma~\ref{langweil}, converges if and only if the following series converges:
\[\sumq \sumr \sum_{j \in \Rs} \left| q^{-(\astA_{1i}+\astA_{1j})s_1-(\astA_{2i}+\astA_{2j})s_2-(\astB_{i}+\astB_{j})}\right|.\]
Proposition~\ref{Del} assures that the latter series converges on 
\[\mathcal{D}_4:=\{\setopt \mid \re((\astA_{1i}+\astA_{1j})s_1+(\astA_{2i}+\astA_{2j})s_2)>1-\astB_i-\astB_j,~i,j \in \R\}.\]
In particular, if we choose $i=j$ in $\Rs$, we see that for each $(s_1,s_2)\in \mathcal{D}_4$,
\[\re(\dci)>\frac{1-2\astB_i}{2}=1-\astB_i-\frac{1}{2}.\]
In other words, $\mathcal{D}_4 \subseteq \D_{\Rs,\frac{1}{2}}$. The equality $\mathcal{D}_4 = \D_{\Rs,\frac{1}{2}}$ holds, since $(s_1,s_2)\in \D_{\Rs,\frac{1}{2}}$ implies
\[\re((\astA_{1i}+\astA_{1j})s_1+(\astA_{2i}+\astA_{2j})s_2)>\frac{1-2\astB_i}{2}+\frac{1-2\astB_j}{2}=1-B_i-B_j.\qedhere\]
\end{proof}

There is $\delta>0$ such that the domains $\mathcal{D}_2$ and $\mathcal{D}_3$ of Lemmata~\ref{D2} and \ref{D3} satisfy
\[\mathcal{D}_2 \cap \mathcal{D}_3 \cap \D_{\Rs,\frac{1}{2}}\supseteq \D_{\Rs,\delta_2}\cap \D_{\Rs,\delta_3}\cap \D_{\Rs,\frac{1}{2}}=\D_{\Rs,\delta}.\]
It then follows from Lemmata~\ref{equiv}, \ref{D2}, \ref{D3}, and~\ref{D4} that 
\begin{align*}
\prodq(1+\sumr \Zips)\Vps &\equiv_{\D_{\Rs,\delta}} \prodq(1+\sumr \Zips)(1- \sumr\Zipbs) \\
			  & \equiv_{\D_{\Rs,\delta}} \prodq(1+\sumr \Zipbs)(1- \sumr\Zipbs) \equiv_{\D_{\Rs,\delta}} 1,\\
\end{align*}
which confirms~\eqref{equivD1}.
%%%%%%%%%%%%%%%%%%%%%%%%%%%%%%%%%%%%%%%%%%%%%%%%%%%%%%%%%%%%%%%%%%%%%%%%%%%%%%%%%%%%%%%%%%%%%%%%%%%%%%%%%%%%%%%%%%%%%%%%%%%%%%%%%%%%%%%%%%%%%%%%%%%%%%%%%%%%%%%%%%%%%%%%%%%%%%%%%
%						Meromorphy of (II)														%
%%%%%%%%%%%%%%%%%%%%%%%%%%%%%%%%%%%%%%%%%%%%%%%%%%%%%%%%%%%%%%%%%%%%%%%%%%%%%%%%%%%%%%%%%%%%%%%%%%%%%%%%%%%%%%%%%%%%%%%%%%%%%%%%%%%%%%%%%%%%%%%%%%%%%%%%%%%%%%%%%%%%%%%%%%%%%%%%%
\subsubsection{Proof of Theorem~\ref{thmA}\eqref{partii}}
It follows from the results of Sections~\ref{pTeili} and~\ref{pTeilii} that $\Gf$ is meromorphic on the domain $\mero \cap \mert$, which is independent of~$\ri$.
Moreover, $\mero=\D_{\Rs,\Delta}$ for some $\Delta >0$ and the intersection of~$\mert$ with a domain of the form~$\D_{\Rs,\delta}$ with $\delta>0$ is 
a domain strictly containing~$\holi$. 

In Section~\ref{badreduction}, we have shown that, for $\p \in Q_1$, the domain of convergence $\mathscr{C}_\p$ of $\arel$ is a domain of the form 
$\bigcap_{i \in [z]\cap W'}\D_{i,\delta}$. % strictly containing~$\holi$. 
Denote by $\mathscr{C}_{Q_1}$ the intersection of all $\mathscr{C}_{\p}$ with $\p \in Q_1$. %Observe that this is a domain of the form $\D_{\Rs,\gamma}$ for some $\gamma>0$.

Since the function \[\prod_{\p \notin Q_2}\arel\] is meromorphic on $\meri=\mer:=\mero \cap \mert \cap \mathscr{C}_{Q_2}$, it is left to show that~$\meri$ is a domain strictly 
containing~$\holi$.

In fact, for each $i\in [z]\cap W'$ the domain~$\D_{i,\delta}$ is a translation of the domain~$\D_i$. Thus, $\Rs$ is also the set of all indices $i \in [z]\cap W'$ such that 
the boundary $\partial \D_{i,\delta}$ shares infinitely many points with the boundary~$\partial \left(\bigcap_{i \in [z]\cap W'}\D_{i,\delta}\right)$. 
In other words, $\bigcap_{i \in [z]\cap W'}\D_{i,\delta}=\bigcap_{i \in \Rs}\D_{i,\delta}= \D_{\Rs,\delta}$. 
Therefore, the domains of convergence $\mathscr{C}_\p$ for $\p \in Q_1$ are domains of the form $\D_{\Rs,\delta}$ with $\delta>0$, and hence $\mathscr{C}_{Q_1}=\D_{\Rs,\gamma}$ 
for some $\gamma>0$. 

%It follows that $\meri$ is a domain strictly containing~$\holi$.
This concludes the proof of Theorem~\ref{thmA}\eqref{partii}.
%%%%%%%%%%%%%%%%%%%%%%%%%%%%%%%%%%%%%%%%%%%%%%%%%%%%%%%%%%%%%%%%%%%%%%%%%%%%%%%%%%%%%%%%%%%%%%%%%%%%%%%%%%%%%%%%%%%%%%%%%%%%%%%%%%%%%%%%%%%%%%%%%%%%%%%%%%%%%%%%%%%%%%%%%%%%%%%%%
%																						%
%				Aknowledgments 																	%
%																						%
%%%%%%%%%%%%%%%%%%%%%%%%%%%%%%%%%%%%%%%%%%%%%%%%%%%%%%%%%%%%%%%%%%%%%%%%%%%%%%%%%%%%%%%%%%%%%%%%%%%%%%%%%%%%%%%%%%%%%%%%%%%%%%%%%%%%%%%%%%%%%%%%%%%%%%%%%%%%%%%%%%%%%%%%%%%%%%%%%
\section*{Acknowledgements}
This paper is part of my PhD thesis. 
I am grateful to my advisor, Christopher Voll, for his constant support and for helpful discussions. 
% I would like to express my gratitude to Tobias Rossmann for helpful conversations and significant comments on a previous version of this paper. I would
I would like to thank Yuri Santos Rego for his support and for his comments on a draft of this paper. 
I also gratefully acknowledge financial support from the DAAD for this work.
% This paper is part of my PhD thesis. I am grateful to my advisor, Prof. Christopher Voll, for his help, encouragement, and constructive criticism.
% I would like to thank Yuri Santos Rego for his comments on an earlier draft of this paper.
% I also gratefully acknowledge financial support from the DAAD for this work.
%%%%%%%%%%%%%%%%%%%%%%%%%%%%%%%%%%%%%%%%%%%%%%%%%%%
%				References 														%	
%%%%%%%%%%%%%%%%%%%%%%%%%%%%%%%%%%%%%%%%%%%%%%%%%%%

 \def\cprime{$'$} \def\cprime{$'$}
 \providecommand{\bysame}{\leavevmode\hbox to3em{\hrulefill}\thinspace}
 \providecommand{\MR}{\relax\ifhmode\unskip\space\fi MR }

\printbibliography%[title={References}]

\end{document}